\documentclass[11pt,reqno]{amsart}

 \usepackage{amsfonts,amsmath,amscd,amssymb,amsbsy,amsthm,amstext,amsopn,amsxtra}
 \usepackage{color,fullpage,mathrsfs}
 \usepackage{subfigure}
 \usepackage{verbatim} 
 \usepackage{stmaryrd}
 \usepackage{extarrows}
 \usepackage[all]{xy}
 \usepackage{bm}   
 \usepackage{hyperref}
 \usepackage{enumitem}
 
\usepackage{cancel}
\usepackage{color}
\usepackage{tikz}
 \usepackage{tikz-cd}

 \numberwithin{equation}{section}
 \hypersetup{colorlinks,linkcolor=[rgb]{0,0,1},citecolor=[rgb]{1,0,0}}

\newtheorem{theorem}{Theorem}[section]
\newtheorem{lemma}[theorem]{Lemma}
\newtheorem{proposition}[theorem]{Proposition}
\newtheorem{corollary}[theorem]{Corollary}

\theoremstyle{definition}

\newtheorem{example}[theorem]{Example}
\newtheorem{examples}[theorem]{Examples}

\newtheorem{remark}[theorem]{Remark}

\newcommand{\one}{\ensuremath{(\mathrm{i})}}
\newcommand{\two}{\ensuremath{(\mathrm{ii})}}
\newcommand{\three}{\ensuremath{(\mathrm{iii})}}

\newcommand{\aLus}{\ensuremath{(\mathrm{a})}}
\newcommand{\bLus}{\ensuremath{(\mathrm{b})}}

\newcommand{\CC}{\ensuremath{\mathbb{C}}} 
\newcommand{\kk}{\ensuremath{\Bbbk}} 
\newcommand{\NN}{\ensuremath{\mathbb{N}}} 
\newcommand{\QQ}{\ensuremath{\mathbb{Q}}}

\newcommand{\ZZ}{\ensuremath{\mathbb{Z}}}

\newcommand{\End}{\operatorname{End}}

\newcommand{\git}{\ensuremath{\operatorname{\!/\!\!/\!}}}
\newcommand{\GL}{\operatorname{GL}}
\newcommand{\head}{\operatorname{h}}
\newcommand{\Hilb}{\operatorname{Hilb}}
\newcommand{\Hom}{\operatorname{Hom}}
\newcommand{\id}{\operatorname{id}}

\newcommand{\Mat}{\operatorname{Mat}}
 
\newcommand{\McKay}{\ensuremath{\overline{Q}}}
\newcommand{\PiMcKay}{\operatorname{\overline{\Pi}}}

\newcommand{\Proj}{\operatorname{Proj}}
\newcommand{\Quot}{\operatorname{Quot}}
\newcommand{\rank}{\operatorname{rk}}
 
\newcommand{\Rep}{\operatorname{Rep}}
\newcommand{\Seshadri}{\operatorname{S}}
\newcommand{\SL}{\operatorname{SL}}

\newcommand{\Spec}{\operatorname{Spec}}
\newcommand{\Sym}{\operatorname{Sym}}
\newcommand{\supp}{\operatorname{supp}}
\newcommand{\tail}{\operatorname{t}}
\newcommand{\tr}{\operatorname{tr}}

\newcommand{\xpij}[2]{\operatorname{x}_{#1,#2}}
\newcommand{\xaij}[2]{\operatorname{x}_{#1,#2}}

\newcommand{\pij}{\operatorname{x}_{p,ij}}

\newcommand{\aij}{\operatorname{x}_{a,ij}}

\newcommand{\plij}{\operatorname{x}_{p_l,ij}}

\title{Hilbert schemes of points on canonical surfaces}

\author{Alastair Craw} 
\author{Ryo Yamagishi} 

\address{Department of Mathematical Sciences, 
University of Bath, 
Claverton Down, 
Bath BA2 7AY, 
UK.}
\email{a.craw@bath.ac.uk}

\address{Department of Mathematics, Faculty of Science, Kyushu University, 744, Motooka, Nishi-ku, Fukuoka, Japan 819-0395.}
\email{yamagishi@kyushu-u.ac.jp}

\begin{document}

\begin{abstract}
 For $n\geq 1$, we investigate the Hilbert scheme of $n$-points on a surface $S$ with canonical singularities. We generalise the well-known theorem of Fogarty~\cite{Fogarty68} by showing that the underlying reduced subscheme of $\Hilb^n(S)$ is a normal variety of dimension $2n$ with canonical singularities, and for $n\leq 7$, we show that $\Hilb^n(S)$ is reduced. When $S$ has symplectic singularities over $\CC$, we show that the underlying reduced subscheme of $\Hilb^n(S)$ also has symplectic singularities, thereby generalising a result of Beauville~\cite{Beauville83}. Our results build on work of the first author with Gyenge, Gammelgaard and Szendr\H{o}i~\cite{CGGS21} that sought to identify the underlying reduced subscheme of the Hilbert scheme of $n$-points on a Kleinian singularity with a Nakajima quiver variety.  
  \end{abstract}

 \maketitle
\tableofcontents

\section{Introduction}
 Let $n\geq 1$ and let $S$ be a connected quasi-projective surface over an algebraically closed field $\kk$
 of characteristic zero. A celebrated result of Fogarty~\cite{Fogarty68} establishes that when $S$ is nonsingular, then the Hilbert scheme of $n$-points on $S$, denoted $\Hilb^{n}(S)$, is a nonsingular, connected variety over $\kk$ of dimension $2n$.  When $S$ is singular, an example for which $\Hilb^{n}(S)$ 
 is reducible was given by Mir\'{o}-Roig--Pons-Llopis~\cite{MP13}. However, for a surface $S$ with canonical singularities, also known as Du Val, Kleinian or ADE singularities,  Zheng~\cite{Zheng23} proved that $\Hilb^{n}(S)$
 is an irreducible scheme of dimension $2n$, while Jelisiejew~\cite[Problem~XIII]{Jelisiejew23} asked whether $\Hilb^n(S)$
 is reduced. 
 
 Our main result establishes the appropriate generalisation of Fogarty's theorem for surfaces with canonical singularities, at least if we give $\Hilb^n(S)$ 
 the underlying reduced scheme structure:
 
  \begin{theorem}
  \label{thm:HilbnSintro}
 Let $S$ be a connected, quasi-projective surface with canonical singularities. For $n\geq 1$, the underlying reduced subscheme of $\Hilb^{n}(S)$
 is a normal variety of dimension $2n$ with canonical singularities.
\end{theorem}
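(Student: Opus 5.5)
The plan is to reduce to a local statement at finitely many singular points and then use the quiver-variety description from \cite{CGGS21}. Normality, dimension and canonical singularities are all étale-local (or analytic-local) properties. So the first step is to show that $\Hilb^n(S)_{\mathrm{red}}$ is étale-locally a product of Hilbert schemes of points on the germs of $S$ at the points of the support.

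Concretely, take a point $[Z]\in\Hilb^n(S)$ and write $Z=\bigsqcup_i Z_i$, where $Z_i$ has length $n_i$ and is supported at $p_i$. The standard argument via the Hilbert–Chow morphism to $\Sym^n(S)$ shows that near $[Z]$ the Hilbert scheme is étale-locally isomorphic to $\prod_i \Hilb^{n_i}(S)$ near $[Z_i]$. Here each factor can be replaced by $\Hilb^{n_i}$ of an étale neighbourhood of $p_i$.

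\begin{itemize}
\item At nonsingular points $p_i$, Fogarty's theorem gives a nonsingular factor of dimension $2n_i$.
\item At a singular point, $S$ is étale-locally isomorphic to a Kleinian singularity $\mathbb{A}^2/\Gamma$ with $\Gamma\subset\SL_2(\kk)$ finite. Hilbert schemes of points are compatible with étale base change on the punctual part, so we reduce to $\Hilb^{m}(\mathbb{A}^2/\Gamma)$.
\end{itemize}

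Products of normal varieties with canonical singularities are again normal with canonical singularities, and dimensions add. The theorem therefore reduces to the case $S=\mathbb{A}^2/\Gamma$, and we may also assume $\kk=\CC$ or at least characteristic zero.

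For $S=\mathbb{A}^2/\Gamma$, the second step invokes \cite{CGGS21}. There $\Hilb^m(\mathbb{A}^2/\Gamma)_{\mathrm{red}}$ is identified with a Nakajima quiver variety $\mathfrak{M}_\theta(v,w)$ for the framed McKay quiver, with framing at the trivial vertex, dimension vector $v=m\delta$-type data, and a particular non-generic stability parameter $\theta$ in the boundary of a chamber. I would aim to prove the following three things for this quiver variety.
\begin{enumerate}[label=(\alph*)]
\item It is irreducible of dimension $2m$, which matches Zheng's result.
\item It is normal. Quiver varieties $\mathfrak{M}_\theta$ for arbitrary $\theta$ are normal, by Crawley-Boevey's normality of $\mu^{-1}(0)\git G$ combined with the fact that $\mathfrak{M}_\theta$ is projective over $\mathfrak{M}_0$ and locally of that form. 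Bellamy–Schedler's local description also gives this.
\item It has symplectic singularities, hence canonical ones, since symplectic singularities are Gorenstein rational.
\end{enumerate}
Bellamy–Schedler prove that quiver varieties $\mathfrak{M}_\theta(v,w)$ are normal with symplectic singularities whenever they are nonempty. Via the étale-local slices, which are products of quiver varieties of smaller dimension vectors, this gives (b) and (c) directly over $\CC$. The Lefschetz principle then transfers canonicity to any algebraically closed field of characteristic zero.

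The main obstacle is making the identification with $\mathfrak{M}_\theta$ an isomorphism of reduced schemes, not just a bijection or homeomorphism. I would first check exactly what \cite{CGGS21} proves. If it gives only a bijective morphism $\mathfrak{M}_\theta\to\Hilb^m(\mathbb{A}^2/\Gamma)_{\mathrm{red}}$, I would upgrade it using Zariski's main theorem: a finite birational morphism onto a normal target is an isomorphism. Normality of the target, though, is exactly what we want to prove.

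So the fallback is the reverse direction. I would construct a morphism $\Hilb^m(\mathbb{A}^2/\Gamma)_{\mathrm{red}}\to\mathfrak{M}_\theta$ via the universal family: pull back ideals to $\Gamma$-equivariant ideals on $\mathbb{A}^2$ and take the corresponding quiver representations. Then I would show it is a closed immersion onto the irreducible $\mathfrak{M}_\theta$ of the same dimension $2m$, using Zheng's irreducibility. This would give the isomorphism since the target is reduced and irreducible.
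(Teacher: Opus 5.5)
Your local reduction is sound and parallels the paper's. The paper uses formal completions (Corollary~\ref{cor:completelocal}) and Zariski's theorem on normality of completions, where you use \'etale neighbourhoods and Artin approximation. The gap is the step that carries all the weight: an isomorphism of schemes $\Hilb^m(\mathbb{A}^2/\Gamma)_{\mathrm{red}}\cong\mathfrak{M}_{\theta_0}$. You cannot take it from \cite{CGGS21}, because the argument there relies on the proof of \cite[Lemma~4.3]{CGGS21}, which has a gap. Closing that gap is the main work of this paper.

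Your fallback fails as well, because extending $I\subset\kk[x,y]^\Gamma$ to $I\kk[x,y]$ is not flat of constant length. Take $\Gamma=\{\pm1\}$, $m=1$ and $I=(x^2,xy,y^2)$, the maximal ideal of the singular point. Then $\kk[x,y]/I\kk[x,y]=\kk\oplus\kk x\oplus\kk y$ has length $3$, and its sign-isotypic part has dimension $2\neq\delta_1=1$. At any other point you get the regular representation, of length $2$. So this gives no family of $\Pi$-modules of dimension vector $(1,m\delta)$ over $\Hilb^m(\mathbb{A}^2/\Gamma)$, and hence no morphism to $\mathfrak{M}_{\theta_0}$ of the kind you describe. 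As you note yourself, Zariski's Main Theorem would be circular here.

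What is missing is a proof that the cornering morphism in the \emph{other} direction is a closed immersion. For $J=\{0\}$ this is $\mathfrak{M}_{\theta_0}\to\Rep(A_J,v_J)\git_{(-n,1)}\GL(v_J)\cong\Hilb^n(\mathbb{A}^2/\Gamma)$. The paper proves it in three steps:
\begin{enumerate}
\item[(i)] It replaces $\Pi$ by $A=\Pi/(b^*)$, which does not change $\mathfrak{M}_\theta$ for $\theta\in F$ (Proposition~\ref{prop:AtoPi}).
\item[(ii)] It shows that $\kk[\Rep(A_J,v_J)]\to\kk[\Rep(A,v)]^{G_K}$ is surjective (Lemma~\ref{lem:nocyclesneededinA} and Proposition~\ref{prop:epi}). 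This uses Lusztig's generators for $G_K$-invariants \cite{Lusztig98}, together with the fact that $\kk[\Rep(A,v)]^{\GL(v)}$ is generated by traces of cycles through vertex $0$. Those traces are sums of diagonal contraction functions, so the trace generators are redundant.
\item[(iii)] It observes that $\Rep(A,v)\git_{\overline{\eta}}\GL(v)\cong(\Rep(A,v)\git G_K)\git_\eta\GL(v_J)$, so the equivariant closed immersion of affine schemes descends to GIT quotients.
\end{enumerate}
Since $\mathfrak{M}_{\theta_0}$ is integral of dimension $2n$ and the map is an isomorphism over the locus of distinct points, its image is the good component. By \cite{Zheng23}, that component is all of $\Hilb^n(\mathbb{A}^2/\Gamma)_{\mathrm{red}}$.

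Without this, or some other proof that a comparison map is a closed immersion, normality and canonicity of the Kleinian local models are unproved, and so is the theorem. Once the identification is available, your route to canonicity through symplectic singularities of the local models plus the Lefschetz principle is a legitimate alternative to the paper's comparison with $\Sym^n(S)$. You should also say explicitly why $\Hilb^n(S)_{\mathrm{red}}$ is globally irreducible, for instance connectedness plus normality as in the paper.
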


 We are unable to prove that $\Hilb^n(S)$ 
 is reduced in general, but we do provide an affirmative answer to Jelisiejew's question for small values of $n$:
 
 \begin{theorem}
    \label{thm:nleq7intro}
    For $n\leq 7$, the scheme $\Hilb^n(S)$
    is reduced, so  Theorem~\ref{thm:HilbnSintro} holds for the natural scheme structure on $\Hilb^n(S)$. In particular, $\Hilb^n(S)$ is normal for $n\leq 7$. 
\end{theorem}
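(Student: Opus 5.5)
Reducedness is local on $\Hilb^n(S)$, and we reduce it to local models of the singular points. Any point $Z\in\Hilb^n(S)$ decomposes as $Z=\bigsqcup_i Z_i$ with the $Z_i$ supported at distinct points $p_i$ of lengths $n_i$. Étale (or analytically) locally near $Z$, the Hilbert scheme is isomorphic to $\prod_i \Hilb^{n_i}(S,p_i)$-neighbourhoods, that is, to a product of neighbourhoods of punctual points in $\Hilb^{n_i}(U_i)$ for small neighbourhoods $U_i$ of the $p_i$. Reducedness in characteristic zero is preserved under finite products over $\kk$. It is also étale local. So it suffices to check reducedness of $\Hilb^m(U)$ near subschemes supported at a single point $p$, for $m\leq n\leq 7$. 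If $p$ is a smooth point, Fogarty's theorem applies. Otherwise we may replace $U$ by the Kleinian singularity $\CC^2/\Gamma$ (or $\Spec$ of the ADE hypersurface $\kk[x,y,z]/(f)$), because the complete local ring at $p$ is that of an ADE singularity and Hilbert schemes of points supported at $p$ depend only on the complete local ring. The remaining task is therefore to show that $\Hilb^m(\kk^2/\Gamma)$ is reduced for $m\leq 7$ and every ADE type.

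The strategy for this case is to prove generic reducedness plus Serre's $S_1$ condition. We know from Zheng's theorem (and Theorem~\ref{thm:HilbnSintro}) that $\Hilb^m(S)$ is irreducible of dimension $2m$. It is generically reduced, since on the dense open set of reduced subschemes avoiding the singular point it is smooth. A scheme that is generically reduced and has no embedded components is reduced, so the real content is ruling out embedded points. The cleanest sufficient condition is that $\Hilb^m(S)$ be Cohen--Macaulay, or even a local complete intersection, near the punctual locus. The plan is to compute the dimension of the Zariski tangent space $\Hom_S(I_Z,\mathcal{O}_Z)$ at punctual $Z$. We then compare it with the local structure: where the tangent space has dimension exactly $2m$, the point is smooth, so no problem arises there. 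For the remaining points we need a finer argument.

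Concretely, I would use the $\kk^*$- (or $\kk^*\times\kk^*$ where available) action on $\kk^2/\Gamma$ contracting to the origin. Every torus-invariant closed subset of $\Hilb^m(\kk^2/\Gamma)$ meets the fixed locus, and the set of non-reduced points (the support of the nilradical) is closed and invariant. It therefore suffices to verify reducedness in a neighbourhood of each torus-fixed point, that is, each monomial-type or homogeneous ideal of colength $m\leq 7$. For the $A$-type singularities these are finitely many monomial ideals in the invariant ring. For $D$ and $E$ types, with $m\leq 7$, only small-degree homogeneous ideals occur, and a fixed component of positive dimension can be handled by the same argument along it. At each such ideal $I$ we write down explicit local equations of the Hilbert scheme, for instance via the quiver description from \cite{CGGS21} or via a Gröbner-cell chart. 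We then check directly, by computer algebra if necessary, that the local ring is reduced, for example by exhibiting it as a complete intersection or Cohen--Macaulay ring of dimension $2m$.

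The main obstacle is this last verification. The number of fixed points and charts grows quickly with $m$ and the group type, and for $m\geq 8$ the method presumably fails or becomes intractable. To keep it manageable I would first use the comparison with the Nakajima quiver variety from \cite{CGGS21}. That comparison identifies $\Hilb^m(S)_{\mathrm{red}}$ with a normal variety, so $\Hilb^m(S)$ is reduced precisely when the natural closed embedding of the quiver variety is an isomorphism. This reduces the problem to showing that the defining ideal is generated by the expected equations in low degree. That check can be made degree-by-degree and is finite for $m\leq 7$. Normality then follows immediately from Theorem~\ref{thm:HilbnSintro}.
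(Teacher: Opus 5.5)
Your reduction to the local models $\Hilb^m(\mathbb{A}^2/\Gamma)$ agrees with the paper, and so does your target criterion: generic smoothness plus $S_1$, with $S_1$ coming from Cohen--Macaulayness. One correction: what must depend only on $\widehat{\mathcal{O}}_{S,p}$ is the formal completion of $\Hilb^m$ at $[Z]$, not just the punctual Hilbert scheme; this is Proposition~\ref{prop:formalnbd} and Corollary~\ref{cor:completelocal}.

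The genuine gap is the core step: you never show that $\Hilb^m(\mathbb{A}^2/\Gamma)$ is $S_1$ for $m\leq 7$. The fixed-point plan defers this to a computer search that you do not carry out. Outside type A it is not a finite list of monomial ideals, because only a one-dimensional torus acts. Fixed loci are already positive-dimensional for $m=2$ in type $D_4$: two of the three generators of $\kk[x,y]^\Gamma$ have degree $4$, so the homogeneous colength-$2$ ideals form a $\mathbb{P}^1$. Your fallback in the last paragraph is unsupported, since nothing explains why generation of the ideal of $\mathfrak{M}_{\theta_0}$ in $\Hilb^m(\mathbb{A}^2/\Gamma)$ would become a finite degree-by-degree check exactly when $m\leq 7$. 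More broadly, your proposal gives no reason why the bound $7$ appears.

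The missing idea is to work inside $\Hilb^n(\mathbb{A}^3)$. The hypersurface $\mathbb{A}^2/\Gamma=(f_\Gamma=0)\subset\mathbb{A}^3$ induces a closed immersion $\Hilb^n(\mathbb{A}^2/\Gamma)\hookrightarrow\Hilb^n(\mathbb{A}^3)$. Write $p,q$ for the projections from the universal family $\mathcal{Z}$ of $\Hilb^n(\mathbb{A}^3)$. Over any open affine trivialising the rank-$n$ bundle $p_*\mathcal{O}_{\mathcal{Z}}$, the image is cut out by the $n$ components of the section $p_*q^*(f_\Gamma)$. For $n\leq 7$, $\Hilb^n(\mathbb{A}^3)$ is irreducible of dimension $3n$ \cite{Mazzola80}, and it is Gorenstein, hence Cohen--Macaulay \cite{Hu25}. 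Since $\Hilb^n(\mathbb{A}^2/\Gamma)$ is irreducible of dimension $2n$, these $n$ equations form a local regular sequence. So $\Hilb^n(\mathbb{A}^2/\Gamma)$ is Cohen--Macaulay, hence $S_1$, and being generically smooth it is reduced by Serre's criterion. This is exactly where $n\leq 7$ enters. With this in hand your globalisation works: Corollary~\ref{cor:completelocal} and Lemma~\ref{lem:completionKleinian} apply without passing to reduced structures. Hence the complete local rings of $\Hilb^n(S)$ are normal, so $\Hilb^n(S)$ is normal and in particular reduced.
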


 In addition, recall~\cite{Beauville00} that a normal variety $X$ over the field $\mathbb{C}$ of complex numbers has \emph{symplectic singularities} if the nonsingular locus of $X$ admits an everywhere non-degenerate closed 2-form $\omega$ such that for any resolution of singularities $\pi\colon Y\to X$, the pullback $\pi^*(\omega)$ extends to a regular 2-form on $Y$. Note that $\pi$ is a \emph{symplectic resolution} if $\pi^*(\omega)$ extends to an everywhere non-degenerate 2-form on $Y$. Our approach to Theorem~\ref{thm:HilbnSintro} allows us to establish the following result:

  \begin{corollary}
  \label{cor:Beauville}
 Let $S$ be a connected quasi-projective surface over $\mathbb{C}$ with symplectic singularities. For  $n\geq 1$, the underlying reduced subscheme of $\Hilb^{n}(S)$
 has symplectic singularities.
   \end{corollary}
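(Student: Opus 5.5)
Since $S$ has symplectic singularities, it is normal with rational Gorenstein singularities. A surface with these properties has at worst canonical (Du Val) singularities, so Theorem~\ref{thm:HilbnSintro} applies. Hence $X:=\Hilb^n(S)_{\mathrm{red}}$ is a normal variety of dimension $2n$ with canonical singularities. It remains to produce the symplectic form and check the extension property. By Beauville's criterion (or Namikawa's characterisation: a normal variety with rational Gorenstein singularities carrying a symplectic form on its smooth locus has symplectic singularities), it is enough to construct an everywhere non-degenerate closed 2-form on $X_{\mathrm{reg}}$. Canonical Gorenstein singularities are rational, so Namikawa's criterion then gives the extension to any resolution.

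\emph{Step 1: the problem is local.} Work locally analytically (or \'etale locally) on $S$ near the finitely many singular points $p_1,\dots,p_k$, each of which is Kleinian, $\CC^2/\Gamma_i$. The Hilbert--Chow morphism $X\to \Sym^n(S)$ shows that, \'etale locally over a point $\sum m_i q_i$ of $\Sym^n(S)$, $X$ is a product $\prod_i \Hilb^{m_i}(S,q_i)$-neighbourhoods. Each factor is either a Hilbert scheme of a smooth surface or the reduced Hilbert scheme of points on $\CC^2/\Gamma$. Since a product of varieties with symplectic singularities has symplectic singularities, it suffices to treat $S=\CC^2/\Gamma$ and $S$ smooth. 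The smooth case is Beauville's theorem, which gives a symplectic form on $\Hilb^n(S)$ restricting the form on $S^{(n)}_{\mathrm{reg}}$.

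\emph{Step 2: the Kleinian case.} I would use the identification underlying the proof of Theorem~\ref{thm:HilbnSintro} (building on \cite{CGGS21}) of $\Hilb^n(\CC^2/\Gamma)_{\mathrm{red}}$ with a Nakajima quiver variety $\mathfrak{M}_\theta$ for the McKay quiver at a suitable non-generic stability parameter. Quiver varieties $\mu^{-1}(0)\git_\theta G$ carry a Poisson structure from Hamiltonian reduction of the symplectic representation space, and the reduced form is symplectic on the stable (free) locus. Moreover, by Bellamy--Schedler every Nakajima quiver variety has symplectic singularities. So either cite this directly, or argue as follows. Take a generic $\theta'$ with $\mathfrak{M}_{\theta'}\to \mathfrak{M}_\theta$ a projective symplectic resolution. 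Pushing forward the holomorphic symplectic form gives a form on the smooth locus of $\mathfrak{M}_\theta$ whose pullback extends, namely the form on $\mathfrak{M}_{\theta'}$. Extension to other resolutions follows since the singularities are rational.

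\emph{Step 3: globalising.} I need the local forms to glue to a global closed form on $X_{\mathrm{reg}}$. I would sidestep the gluing: on the open dense locus of reduced subschemes supported away from $\mathrm{Sing}(S)$, $X$ is \'etale over $S^{(n)}$, and the form $\sum_i \mathrm{pr}_i^*\omega_S$ descends. Its extension across the complement in $X_{\mathrm{reg}}$ is automatic by normality if it extends across codimension-one points. Non-degeneracy there is a local question, settled by Steps 1--2, because the local forms agree with the pulled-back form on the generic locus.

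The main obstacle is checking, in Step 2, that the quiver-variety symplectic form agrees with the form induced from $S^{(n)}$ on the open locus. Only then do the local verifications apply to the globally defined form. I would compare both forms on the open set of $n$ distinct free $\Gamma$-orbits in $\CC^2$, where both equal the descent of the standard form on $(\CC^2)^n$ up to a nonzero scalar.
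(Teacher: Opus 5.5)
Your route differs from the paper's. The paper never examines the symplectic structure of the Kleinian local models. It starts from Beauville's form on $V=\Hilb^n(S^{\mathrm{reg}})$ and shows in Lemma~\ref{lem} that $\pi^{-1}(W)$ has codimension two. That lemma uses the symplectic-leaf stratification of $\Sym^n(S)$, the product decomposition of Hilbert--Chow fibres, and semismallness of $\Hilb^m(\mathbb{A}^2/\Gamma)\to\Sym^m(\mathbb{A}^2/\Gamma)$, which comes from the quiver-variety description and \cite[Theorem~A.1]{BC20}. The form then extends over the smooth locus $U$ by Hartogs. It stays non-degenerate because the degeneracy locus of a $2$-form on a smooth $2n$-fold is the zero divisor of $\omega^n$. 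Extension to resolutions is inherited from $\Sym^n(S)$, since any resolution of $\Hilb^n(S)_{\mathrm{red}}$ is also one of $\Sym^n(S)$.

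You skip the codimension estimate and instead check regularity and non-degeneracy pointwise, using the symplectic singularities of $\mathfrak{M}_{\theta_0}$ (Bellamy--Schedler) together with Namikawa's criterion. That is viable and tells you more: the global form is locally the quiver form. But it forces a comparison of forms that the paper's argument never needs.

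As written, that comparison has a gap. Under a local identification $(S,p_k)\cong(\mathbb{A}^2/\Gamma_k,0)$, the form $\omega_S$ becomes $h\,\omega_{\mathrm{std}}$ for some local unit $h$, not a constant multiple of $\omega_{\mathrm{std}}$. So in the local model your globally defined form is $\sum_i \mathrm{pr}_i^*(h\,\omega_{\mathrm{std}})$, which is not proportional to the quiver form. Your proposed check on $n$ distinct free $\Gamma$-orbits compares only the quiver form with the standard form. It therefore does not transfer the local non-degeneracy to the global form, and the statement that ``both equal the descent of the standard form'' is false for an arbitrary chart.

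You need one more step; either of the following works.
\begin{enumerate}
\item Normalise the chart by a $\Gamma$-equivariant Moser argument. Lift $\omega_S$ to a $\Gamma$-invariant symplectic form near $0\in\mathbb{A}^2$, which extends over $0$ by Hartogs. Take a $\Gamma$-invariant Moser primitive (the radial homotopy operator is already equivariant). This gives an equivariant coordinate change carrying the form to $h(0)\,\omega_{\mathrm{std}}$. This works analytically or formally, compatibly with Corollary~\ref{cor:completelocal}, but not with \'etale charts.
\item Observe that regularity of the global form on $X_{\mathrm{reg}}$ already follows from $\Sym^n(S)$ having symplectic singularities: take a resolution of $X$ that is an isomorphism over $X_{\mathrm{reg}}$. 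The ratio of the global form's top power to that of the quiver form is then a regular function on the normal variety $\mathfrak{M}_{\theta_0}$, hence pulled back from $\Sym^n(\mathbb{A}^2/\Gamma)$. There it is a nonzero constant times $\prod_{i=1}^n h(x_i)$, which is a unit near $n\cdot 0$.
\end{enumerate}
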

       
 When $S$ is non-singular, then $\Hilb^{n}(S)$ is also non-singular, in which case Corollary~\ref{cor:Beauville} reduces to the well-known result of Beauville~\cite{Beauville83} that $\Hilb^{n}(S)$ 
 is a holomorphic symplectic variety. 

 \subsection{Nakajima quiver varieties}
 Recall that the \emph{good} (or \emph{principal}) \emph{component} of $\Hilb^n(S)$ 
 is the scheme-theoretic closure of the locus parametrising reduced subschemes of length $n$ in $S$. We deduce Theorem~\ref{thm:HilbnSintro} by establishing the following result when $S$ is a Kleinian singularity:

  \begin{theorem}
   \label{thm:Hilbnintro}
   Let $\Gamma\subset\SL(2,\kk)$ be a finite subgroup. For $n\geq 1$, the good component of $\Hilb^{n}(\mathbb{A}^2/\Gamma)$ is isomorphic to a Nakajima quiver variety $\mathfrak{M}_{\theta_0}$ for a special stability parameter $\theta_0$. Moreover, when working over $\mathbb{C}$, it has symplectic singularities
   and admits a unique projective, symplectic resolution.
   \end{theorem}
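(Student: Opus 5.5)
We would realise the good component of $\Hilb^n(\mathbb{A}^2/\Gamma)$ as a GIT quotient of the McKay quiver representations, following \cite{CGGS21}. Let $Q$ be the framed McKay quiver of $\Gamma$ with dimension vector $v=n\delta$, where $\delta$ is the regular representation, and framing $w=\rho_0$. Then $\mathfrak{M}_\theta=\mu^{-1}(0)\git_\theta G(v)$. For generic $\theta$ in a suitable chamber, $\mathfrak{M}_\theta\cong\Hilb^{\Gamma\text{-}n}$-type moduli, namely $\Hilb^{[n]}$ of the minimal resolution $Y\to\mathbb{A}^2/\Gamma$; more precisely, it is the $\Gamma$-Hilbert scheme of $n\delta$-clusters. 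The parameter $\theta_0$ lies on a specific wall, the one with $\theta_0(\rho_i)=0$ for $i\neq 0$, so that only the vertex $0$ and the framing are seen.

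The steps are as follows.

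\textbf{(1) A morphism to the Hilbert scheme.} A $\theta_0$-semistable representation determines, through the invariant part at vertex $0$, a cyclic $\kk[x,y]^\Gamma$-module of length $n$. This gives a quotient of $\kk[x,y]^\Gamma$ of length $n$, and hence a morphism $\mathfrak{M}_{\theta_0}\to\Hilb^n(\mathbb{A}^2/\Gamma)$. We would build the universal family from the tautological bundle at vertex $0$ on the $\theta_0$-quotient and use the universal property.

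\textbf{(2) Closed immersion.} We would show the morphism is a closed immersion by proving surjectivity on coordinate rings. The invariants of $\mu^{-1}(0)$ under the $\theta_0$-relevant group are generated by the Plücker-type coordinates of the ideal. Here we would use the first fundamental theorem for $\GL$ on the vertices $i\neq 0$, and Crawley-Boevey's results on generators of semi-invariants.

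\textbf{(3) Image is the good component.} Both sides are irreducible of dimension $2n$. The generic locus of $n$ distinct free orbits maps isomorphically onto the reduced-point locus, and $\mathfrak{M}_{\theta_0}$ is reduced and irreducible by Crawley-Boevey's flatness and normality of $\mu^{-1}(0)$ for this $v$. So the image is the closure of that locus.

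\textbf{(4) Singularities and resolution.} Over $\CC$, Bellamy–Schedler show that quiver varieties $\mathfrak{M}_\theta$ have symplectic singularities, and that the variation of GIT map $\mathfrak{M}_\theta\to\mathfrak{M}_{\theta_0}$ from a generic adjacent chamber is a projective symplectic resolution. For uniqueness, we would identify the movable cone of $\mathfrak{M}_\theta$ over $\mathfrak{M}_{\theta_0}$ with the image of the cone of $\theta$'s adjacent to $\theta_0$. By Namikawa and Bellamy–Craw, projective symplectic resolutions correspond to chambers of the movable cone. We would then show that only one GIT chamber contains $\theta_0$ in its closure relative to the face.

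\textbf{Main obstacle.} The main obstacle is step (2): proving that the coordinate ring of the quotient is generated by the functions pulled back from the Hilbert scheme, so that the morphism is a closed immersion rather than merely a bijective or finite map onto the good component. We would attempt this via explicit generators, namely traces of cycles through vertex $0$, reduced using the preprojective relations. Uniqueness in (4) also needs a careful wall computation; we expect the face through $\theta_0$ to meet a single chamber because $\theta_0$ is the zero-dimensional $\Gamma$-direction degeneration.
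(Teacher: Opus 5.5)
Your outline has the same architecture as the paper's proof. You take invariants for $G_K=\prod_{i\neq 0,\infty}\GL(v_i)$; your ``first fundamental theorem'' is Lusztig's Theorem~\ref{thm:Lusztig}. You discard traces of cycles supported on $\{1,\dots,r\}$ because the full invariant ring is generated by traces of cycles through $0$. You identify the image with the good component by irreducibility, and you quote Bellamy--Schedler and \cite[Lemma~2.3, Theorem~1.2]{BC20} for (4).

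\textbf{The gap.} Steps (1) and (2) both rely on the arrow $b^*\colon 0\to\infty$ (Nakajima's $j$) acting by zero, and you never show this. On $\mu^{-1}(0)=\Rep(\Pi,v)$ the relation at vertex $0$ is $\sum_{a\in\overline{Q}_1,\,\head(a)=0}\epsilon(a)B_aB_{a^*}+B_bB_{b^*}=0$. So cycles at $0$ act through $e_0\Pi e_0$, not through $R_0=\kk[x,y]^\Gamma\cong e_0\PiMcKay e_0$. Already for trivial $\Gamma$ one has $[B_x,B_y]=-B_bB_{b^*}$, which is nonzero at many points of $\Rep(\Pi,v)$. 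The consequences are:
\begin{enumerate}
\item Step (1) does not produce a $\kk[x,y]^\Gamma$-module until you know $B_{b^*}=0$ on the semistable locus.
\item In step (2) there is no natural ring map from $\kk[\Rep(A_J,v_J)]$ (commuting loops satisfying $f_\Gamma$) to $\kk[\Rep(\Pi,v)]^{G_K}$.
\item The entries of $B_{b^*}$ are themselves $G_K$-invariant, not identically zero, and not pulled back from the Hilbert scheme. So the surjectivity you aim for fails on $\mu^{-1}(0)$.
\end{enumerate}

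\textbf{How the paper closes it.} It passes to $A=\Pi/(b^*)$ and proves $\Rep(A,v)\git_\theta\GL(v)\cong\mathfrak{M}_\theta$ (Proposition~\ref{prop:AtoPi}). For $\theta_0$ only the chamber $C_+$ is needed. For $\theta_+\in C_+$ the statement is \cite[Lemma~3.1]{CGGS21}, the $n\Gamma$-Hilbert scheme analogue of Nakajima's $j=0$. Surjectivity of the VGIT map $\mathfrak{M}_{\theta_+}\to\mathfrak{M}_{\theta_0}$ then shows that the closed immersion $\Rep(A,v)\git_{\theta_0}\GL(v)\hookrightarrow\mathfrak{M}_{\theta_0}$ is bijective on points. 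It is therefore an isomorphism, since $\mathfrak{M}_{\theta_0}$ is reduced (Lemma~\ref{lem:reducedness}). After that, your step (2) is cleanest done as in the paper. The $G_K$-invariants give a $\GL(1)\times\GL(n)$-equivariant closed immersion $\Rep(A,v)\git G_K\hookrightarrow\Rep(A_J,v_J)$. This automatically induces a closed immersion of $(-n,1)$-quotients, so no separate study of semi-invariant (``Pl\"ucker'') generators is needed.

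\textbf{Smaller corrections.}
\begin{enumerate}
\item In (3), $\mu^{-1}(0)$ is not normal, or even irreducible, for $v=(1,n\delta)$; for trivial $\Gamma$ it is Gan--Ginzburg's almost-commuting variety, with $n+1$ components. Crawley-Boevey's flatness alone does not give reducedness either. What you need is reducedness of $\Rep(\Pi,v)$ \cite{GanGinzburg06}, so that the natural scheme structure on $\mathfrak{M}_{\theta_0}$ is reduced. This is essential, since the good component is reduced and you claim an isomorphism of schemes. You then need \cite{BS21} for normality and irreducibility of $\mathfrak{M}_{\theta_0}$.
\item Your set-up conflates two chambers. $\theta_0$ lies in the closure of $C_+$, where $\mathfrak{M}_\theta\cong n\Gamma\text{-}\Hilb(\mathbb{A}^2)$, and that is the unique resolution in (4). $\Hilb^n$ of the minimal resolution comes from the chamber $C_-$.
\item Building the map in (1) via the universal property requires descending a vertex-$0$ tautological bundle to $\mathfrak{M}_{\theta_0}$. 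This is only a coarse moduli space, because $\theta_0$ is not generic. The paper avoids this by mapping into the fine moduli space $\Rep(A_J,v_J)\git_{(-n,1)}\GL(v_J)\cong\Hilb^n(\mathbb{A}^2/\Gamma)$ of \cite[Proposition~6.2]{CGGS21}.
\end{enumerate}
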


  Since $\Hilb^n(\mathbb{A}^2/\Gamma)$ is irreducible, the conclusions of Theorem~\ref{thm:Hilbnintro} actually hold for the underlying reduced subscheme of $\Hilb^{n}(\mathbb{A}^2/\Gamma)$. A similar statement was published in work of the first author with Gammelgaard, Gyenge and Szendr\H{o}i~\cite{CGGS21}, though Yehao Zhou pointed out a gap in the proof of \cite[Lemma~4.3]{CGGS21}. Here, we adopt a more algebraic approach to Theorem~\ref{thm:Hilbnintro} that enables us to prove a similar result for many more stability conditions, and we go further by establishing that $\Hilb^n(\mathbb{A}^2/\Gamma)$ is in fact reduced for $n\leq 7$. In addition, we extend these results to the global situation considered in Theorems~\ref{thm:HilbnSintro}-\ref{thm:nleq7intro} and Corollary~\ref{cor:Beauville}.
   
   To construct the quiver varieties of interest, consider the extended Dynkin graph of type ADE associated to $\Gamma$ by McKay~\cite{McKay80}, and add a framing node $\infty$ and a single edge joining $\infty$ to the extending node $0$ of the graph. The doubled quiver $Q$ of this graph has vertex set $Q_0=\{\infty, 0, 1, \dots, r\}$, and we write $\Pi$ for the preprojective algebra of $Q$. For $n\geq 1$, define the vector 
   \[
   v = (1,n\delta)\in \NN^{Q_0}=\NN\times \NN^{r+1},
   \]
 where $\delta\in \NN^{r+1}$ is the minimal imaginary root of the ADE affine root system. Nakajima quiver varieties are moduli spaces $\mathfrak{M}_\theta:=\mathfrak{M}_\theta(\Pi,v)$ of $\theta$-semistable $\Pi$-modules of dimension vector $v$, where $\theta = (\theta_i)_{i\in Q_0}$ is a stability condition. Work of Gan--Ginzburg~\cite{GanGinzburg06} implies that the scheme $\mathfrak{M}_\theta(\Pi,v)$ is actually reduced, in which case $\mathfrak{M}_\theta$ is a normal variety by work of Bellamy--Schedler~\cite{BS21}. This normality statement for quiver varieties, when combined with the isomorphism from Theorem~\ref{thm:Hilbnintro}, provides the essential ingredient in the normality statement from Theorem~\ref{thm:HilbnSintro}. 
  
 It is well-known that the affine quiver variety satisfies $\mathfrak{M}_0\cong \Sym^n(\mathbb{A}^2/\Gamma)$, while work carried out independently by Haiman, Kuznetsov~\cite{Kuznetsov07} and Nakajima produced a generic stability condition $\theta$ such that $\mathfrak{M}_\theta\cong \Hilb^{n}(S)$,
 where $S\to \mathbb{A}^2/\Gamma$ is the minimal resolution. The birational geometry of these quiver varieties was described completely by Bellamy--Craw~\cite{BC20}, where the GIT wall-and-chamber structure of a closed cone $F\subset \Theta_v$ is identified with the decomposition of the movable cone of $\mathfrak{M}_\theta$ into Mori chambers. In particular, every  projective, crepant partial resolution of $\Sym^n(\mathbb{A}^2/\Gamma)$ is isomorphic to $\mathfrak{M}_\theta$ for some $\theta\in F$. The stability condition $\theta_0$ from Theorem~\ref{thm:Hilbnintro} lies in an extremal ray of the cone $F$, so the induced morphism $\mathfrak{M}_{\theta_0}\to \mathfrak{M}_0$ obtained by variation of GIT quotient is of relative Picard rank one. This morphism fits into a commutative diagram 
  \begin{equation}
 \label{eqn:HilbertChow}
 \begin{tikzcd}
 \mathfrak{M}_{\theta_0} \ar[r,hook]\ar[d] & \Hilb^n(\mathbb{A}^2/\Gamma)\ar[d]\\  \mathfrak{M}_0\ar[r,"\cong"] & \Sym^n(\mathbb{A}^2/\Gamma),
  \end{tikzcd}
  \end{equation}
  where the right-hand vertical map is the Hilbert--Chow morphism and the closed immersion along the top identifies $\mathfrak{M}_{\theta_0}$ with the good component of the Hilbert scheme.

  \subsection{Sketch of the proof}
 The isomorphism from Theorem~\ref{thm:Hilbnintro} is obtained as the composition of three morphisms. 

  First, let $A$ be the algebra obtained as the quotient of the preprojective algebra $\Pi$ by the ideal generated by the unique arrow in $Q$ with head at the framing vertex $\infty$. Since $A$ is a quiver algebra, moduli spaces of $A$-modules of dimension vector $v$ are obtained as GIT quotients $\Rep(A,v)\git_\theta \GL(v)$ for some stability condition $\theta\in \Theta_v$, where $\GL(v)$ is a product of general linear groups acting by conjugation on the affine scheme $\Rep(A,v)$ parametrising $A$-modules of dimension vector $v$. We show that the natural closed immersion \begin{equation}
  \label{eqn:quivermoduliGIT}
\Rep(A,v)\git_\theta \GL(v)\longrightarrow\mathfrak{M}_\theta
\end{equation}
 is an isomorphism for all $\theta\in F$ (see Proposition~\ref{prop:AtoPi}), thereby generalising an observation for the Hilbert scheme of $n$-points in the affine plane by Nakajima~\cite[Proposition~2.7]{NakajimaHilbert}. 
 
 Secondly, we study quiver moduli spaces arising from a collection of subalgebras of $A$ indexed by nonempty subsets $J\subseteq \{0,1,\dots, r\}$. Following work of the first author with Gyenge--Gammelgaard--Szendr\H{o}i~\cite{CGGS21}, define $A_J$ to be the subalgebra of $A$ spanned by the classes of paths with head and tail in the set of vertices $\{\infty\}\cup J$, and let 
 $v_J\in \mathbb{N}\times \mathbb{N}^J$ be the image of $v$ under the projection  $\mathbb{N}^{Q_0}\to\mathbb{N}\times  \NN^J$. As above, a product of general linear groups $\GL(v_J)$ acts on the representation scheme $\Rep(A_J,v_J)$, and the GIT quotient $\Rep(A_J,v_J)\git_\eta \GL(v_J)$ for any stability condition $\eta$ is a moduli space of $A_J$-modules of dimension vector $v_J$. 
 
 To study the geometric link between the moduli spaces for the algebras $A_J$ and $A$, let $G_K$ denote the subgroup of $\GL(v)$ obtained as the product of general linear groups indexed by vertices in $K:=Q_0\setminus (\{\infty\}\cup J)$. Then $\GL(v_J)\cong \GL(v)/G_K$ acts on the $G_K$-invariant subalgebra $\kk[\Rep(A,v)]^{G_K}$, and a result of Lusztig~\cite{Lusztig98} provides generators for this algebra, generalising the well-known result of Le Bruyn--Procesi~\cite{LBP90} in the case $G_K=\GL(v)$.  Our key technical result leverages the inclusion of noncommutative algebras $A_J\hookrightarrow A$ to obtain a $\kk$-algebra epimorphism 
 \[
 \kk[\Rep(A_J,v_J)]\longrightarrow \kk[\Rep(A,v)]^{G_K}
 \]
 which in turn gives a $\GL(v_J)$-equivariant closed immersion from $\Rep(A,v)\git G_K$ into $\Rep(A_J,v_J)$. For each stability condition $\eta$ for $A_J$-modules of dimension vector $v_J$, we obtain a stability condition $\overline{\eta}$ for $A$-modules of dimension vector $v$ from $\eta$ by appending a zero for each vertex in $K$, and the above $\GL(v_J)$-equivariant closed immersion of affine schemes determines a closed immersion 
 \begin{equation}
     \label{eqn:epiintro}
\Rep(A,v)\git_{\overline{\eta}}\GL(v)\longrightarrow \Rep(A_J,v_J)\git_{\eta} \GL(v_J)
\end{equation}
 of GIT quotients. While our primary interest is when $J=\{0\}$, we nevertheless allow $J$ to be any nonempty subset of $\{0,1,\dots, r\}$ so that we can complete the quiver variety description of Quot schemes of Kleinian orbifolds~\cite{CGGS2} with the reduced scheme structure (see Section~\ref{sec:CGGS21}).
  
  Finally, we restrict attention to the special case $J=\{0\}$ to construct the third morphism required in the proof of Theorem~\ref{thm:Hilbnintro}. The stability condition $\eta=(-n,1)$ for $A_J$-modules of dimension vector $v_J$ determines the stability condition 
  \[
  \theta_0:=\overline{\eta}=(-n,1,0,0,\dots, 0)\in \Theta_v
  \]
  for $A$-modules of dimension vector $v$. The morphisms \eqref{eqn:quivermoduliGIT} and \eqref{eqn:epiintro} fit into a commutative diagram 
 \begin{equation}
\label{eqn:quivertoHilbintro}
\begin{tikzcd}
\mathfrak{M}_{\theta_0} \ar[r,hook] & \Hilb^{n}(\mathbb{A}^2/\Gamma)  \\
\Rep(A,v)\git_{\theta_0}\GL(v)\ar[r,hook,"\eqref{eqn:epiintro}"]\ar[u,"\eqref{eqn:quivermoduliGIT}"] & \Rep(A_J,v_J)\git_{(-n,1)} \GL(v_J)\ar[u,swap,"\cong"]
  \end{tikzcd}
  \end{equation}
  of schemes over $\Sym^n(\mathbb{A}^2/\Gamma)$, where the right-hand vertical map is the isomorphism constructed in \cite[Proposition~6.2]{CGGS21}. All morphisms in diagram \eqref{eqn:quivertoHilbintro} restrict to isomorphisms over the dense open subset $U\subset \Sym^n(\mathbb{A}^2/\Gamma)$ parametrising  $n$ distinct unordered points in $\mathbb{A}^2/\Gamma$, and it follows that the closed immersion along the top of diagram \eqref{eqn:quivertoHilbintro}  identifies the quiver variety $\mathfrak{M}_{\theta_0}$ with the good component of the Hilbert scheme $\Hilb^n(\mathbb{A}^2/\Gamma)$ as in Theorem~\ref{thm:Hilbnintro}. 
  
  The remaining geometric statements about this good component from Theorem~\ref{thm:Hilbnintro} were shown by Bellamy--Schedler~\cite{BS21}.  In addition, for $n\le7$, our proof that $\Hilb^n(\mathbb{A}^2/\Gamma)$ is actually reduced builds on recent work of Hu~\cite{Hu25} which shows that $\Hilb^n(\mathbb{A}^3)$ is Gorenstein.

 \subsection{On canonical surfaces}
To obtain Theorem~\ref{thm:HilbnSintro}, we study formal neighbourhoods of closed points in the Hilbert scheme of points on an arbitrary affine scheme of finite type over $\kk$. We show that every such formal neighbourhood depends only on Hilbert schemes of formal neighbourhoods of points in the scheme itself. In particular, for a surface $S$ with canonical singularities, the formal neighbourhood of any point of $\Hilb^n(S)$
is a product of Hilbert schemes of formal neighbourhoods of closed points in $S$, each of which is either nonsingular or is \'{e}tale-locally isomorphic to the singular point of $\mathbb{A}^2/\Gamma$ for some finite subgroup $\Gamma\subset \SL(2,\kk)$. The normality statement from Theorem~\ref{thm:Hilbnintro} allows us to conclude that the formal neighbourhood of each closed point in the underlying reduced subscheme of $\Hilb^n(S)$ 
is normal. Then $\Hilb^n(S)$ 
is irreducible because it is connected, so its underlying reduced subscheme is a variety over $\kk$. The reducedness statement in Theorem~\ref{thm:nleq7intro} follows directly from reducedness of $\Hilb^n(\mathbb{A}^2/\Gamma)$ for $n\leq 7$ (see Theorem~\ref{thm:reduced}).

  For a surface $S$ with symplectic singularities over $\CC$,  we establish Corollary~\ref{cor:Beauville} after analysing the Hilbert--Chow morphism from \eqref{eqn:HilbertChow}  via the stratification of $\Sym^n(S)$ by symplectic leaves.

\medskip

\noindent \textbf{Notation.} Throughout the paper, $\kk$ is an algebraically closed field of characteristic zero. Let $\Mat(m\times n)$ denote the $\kk$-vector space of $m\times n$ matrices with entries in $\kk$. We write nontrivial paths in $Q$ as products $p = a_\ell\cdots a_1$ of arrows from right to left, starting at vertex $\tail(p):=\tail(a_1)$ and ending at vertex $\head(p):=\head(a_{\ell})$. A cycle in $Q$ is a path whose head and tail vertices coincide.

\medskip

\noindent \textbf{Acknowledgements.} We are grateful to an anonymous referee for pointing out that Theorem~\ref{thm:Lusztig}, for which we supplied a proof in the original version of this paper, is in fact a theorem of Lusztig. Thanks also to Yehao Zhou for pointing out the gap in the proof of \cite[Lemma~4.3]{CGGS21}, and to S{\o}ren Gammelgaard, Joachim Jelisiejew, Hal Schenck and Ruth Wye for helpful conversations. The authors were supported by Research Project Grant RPG-2021-149 from The Leverhulme Trust.

\section{Invariants for quiver algebras}
We recall Lusztig's generalisation of the Le Bruyn--Procesi theorem that computes the invariant subring of the coordinate ring of the representation scheme. We also introduce the notation required for the study of algebras defined by a quiver with relations.

 \subsection{Quiver representations} 
 Let $Q$ be a finite, connected quiver with vertex set $Q_0$ and arrow set $Q_1$. For any vector $v=(v_i)\in \mathbb{N}^{Q_0}$, the representation space is the $\kk$-vector space 
 \begin{equation}
    \label{eqn:Repold}
\Rep(Q,v):=\bigoplus_{a\in Q_1} \Mat(v_{\head(a)}\times v_{\tail(a)})
\end{equation}
 parametrising representations of $Q$ of dimension vector $v$, where $\head(a)$ and $\tail(a)$ denote the vertices at the head and tail respectively of $a$. Let $\kk Q$ denote the path algebra of $Q$, and for $i\in Q_0$, write $e_i\in \kk Q$ for the trivial path at vertex $i$.

 For any nontrivial cycle $\gamma=a_\ell\cdots a_1$ in $Q$ obtained by traversing the arrows $a_1, a_2, \dots a_\ell$ in that order, the map sending $B\in \Rep(Q,v)$ to
 \[
 \tr_\gamma(B):= \tr(B_{a_\ell}\cdots B_{a_1})
 \]
 is a polynomial function $\tr_\gamma$ on $\Rep(Q,v)$ called the \emph{trace function} for the cycle $\gamma$. More generally, for any vertex $i$ and any linear combination $\gamma=\sum_k \lambda_k \gamma_k$ of nontrivial cycles passing through $i$, we obtain a trace function on $\Rep(Q,v)$ by setting $\tr_\gamma:= \sum_k \lambda_k \tr_{\gamma_k}$.

 To define contraction functions, let $p=a_\ell\cdots a_1$ be a nontrivial path in $Q$ from vertex $\tail(p)$ to vertex $\head(p)$. For $B=(B_a)\in \Rep(Q,v)$, the product satisfies $B_{a_\ell}\cdots B_{a_1}\in \Mat(v_{\head(p)}\times v_{\tail(p)})$. 
 For $1\leq i\leq v_{\head(p)}$ and $1\leq j\leq v_{\tail(p)}$, the map sending $B$ to the $(i,j)$-entry of this matrix, denoted
 \[
 \pij(B):= \big(B_{a_\ell}\cdots B_{a_1}\big)_{ij},
 \]
 defines the  \emph{$(i,j)$-contraction function} $\pij$ on $\Rep(Q,v)$. More generally, for $\tail, \head\in Q_0$ and any linear combination $g=\sum \lambda_l p_l$ of paths $p_l$, each with head and tail at $\head$ and $\tail$ respectively, extend linearly to obtain an $(i,j)$-contraction function $\xpij{g}{ij} := \sum_l \lambda_l \xpij{p_l}{ij}$ for  $1\leq i\leq v_{\head}$ and $1\leq j\leq v_{\tail}$.  The decomposition from \eqref{eqn:Repold} shows that the coordinate ring of the representation space is
 \begin{equation}
     \label{eqn:kRep}
 \kk\big[\!\Rep(Q,v)\big] = \bigotimes_{a\in Q_1} \kk\big[\aij\mid 1\leq i\leq v_{\head(a)}\text{ and }1\leq j\leq v_{\tail(a)}\big].
 \end{equation}
 
 The group $\GL(v):= \prod_{i\in Q_0} \GL(v_i,\kk)$ acts on $\Rep(Q,v)$ by conjugation, where $g=(g_i)\in \GL(v)$ and $B=(B_a)_{a\in Q_1}\in \Rep(Q,v)$ satisfy
 \begin{equation}
 \label{eqn:groupaction}
 g\cdot B = (g_{\head(a)} B_a g_{\tail(a)}^{-1})_{a\in Q_1}.
 \end{equation}
 The diagonal scalar subgroup $\kk^\times = \{(\lambda\id_{v_i})_{i\in Q_0} \mid \lambda\neq 0\}$ of $\GL(v)$ acts trivially, so a polynomial on $\Rep(Q,v)$ is $\GL(v)$-invariant if and only if it is invariant under the action of $\GL(v)/\kk^\times$. The trace function $\tr_\gamma$ of any nontrivial cycle in $Q$ is $\GL(v)$-invariant. More generally, for any subset $K\subseteq Q_0$, consider the subgroup 
 \begin{equation}
     \label{eqn:HK}
 G_K:= G_K(v)= \prod_{k\in K} \GL(v_k)
 \end{equation}
 of $\GL(v)$ indexed by the vertices in $K$. 
 If both the head and tail of a path $p$ lie in the set $Q_0\smallsetminus K$, then $\pij$ is a $G_K$-invariant function. 
 
 The next following result generalises the well-known theorem of Le Bruyn--Procesi~\cite{LBP90} that the subalgebra of $\GL(v)$-invariant polynomials $\kk[\Rep(Q,v)]^{\GL(v)}$ is generated by the trace functions $\tr_\gamma$ associated to cycles $\gamma$ in the quiver $Q$.
 
\begin{theorem}[Lusztig]
\label{thm:Lusztig}
 For any subset $K\subseteq Q_0$, the algebra $\kk[\Rep(Q,v)]^{G_K}$ is generated by the:
 \begin{enumerate}
 \item[\aLus] trace functions $\tr_\gamma$ associated to cycles $\gamma$ in $Q$ that traverse only arrows in $Q$ with head and tail in $K$; and
 \item[\bLus] contraction functions $\pij$ for paths $p$ in $Q$ with head and tail in $Q_0\setminus K$, where the indices satisfy $1\leq i\leq v_{\head(p)}$ and $1\leq j\leq v_{\tail(p)}$.
 \end{enumerate}
 \end{theorem}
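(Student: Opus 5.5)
We reduce to the first fundamental theorem of invariant theory for $\GL$ acting on several copies of vectors, covectors and matrices, in the form of Procesi's theorem on invariants of matrices with vectors and covectors. Since $\kk$ has characteristic zero and $G_K$ is reductive, the invariant ring is spanned by the invariants of multihomogeneous components. By polarisation (using that the Reynolds operator commutes with polarisation/restitution in characteristic zero) it therefore suffices to treat multilinear invariants.

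First we decompose the arrows of $Q$ according to how $G_K$ acts on them. For an arrow $a$, the matrix $B_a$ is:
\begin{itemize}
\item an element of $\End(\kk^{v_k})$ under conjugation if $\head(a),\tail(a)\in K$ coincide;
\item a $\Hom(\kk^{v_k},\kk^{v_l})$ if both lie in $K$ but differ;
\item a collection of $v_{\head(a)}$ covectors for $\GL(v_{\tail(a)})$ if only $\tail(a)\in K$;
\item a collection of $v_{\tail(a)}$ vectors for $\GL(v_{\head(a)})$ if only $\head(a)\in K$;
\item a collection of scalars on which $G_K$ acts trivially if neither end lies in $K$.
\end{itemize}
Concretely, the rows of $B_a$ when $\head(a)\notin K$ give covectors $e_i^T B_a$, and the columns of $B_a$ when $\tail(a)\notin K$ give vectors $B_a e_j$. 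Thus
\[
\kk[\Rep(Q,v)] = \kk[W]\otimes\kk[\text{entries of arrows outside }K],
\]
where $W$ is a $G_K$-representation built from $\Hom$ spaces between the $\kk^{v_k}$, vectors and covectors.

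Next we apply the multilinear FFT for $\prod_{k\in K}\GL(v_k)$. A multilinear invariant of a product $\prod_k \GL(V_k)$ on a tensor product of copies of $V_k$, $V_k^*$ and $\Hom(V_k,V_l)$ is a linear combination of complete contractions, which is Procesi's version of the FFT, applied factor by factor (Schur--Weyl duality for each $\GL(V_k)$). Each complete contraction pairs every upper $V_k$-index with a lower one. Following the index pairings traces out a graph whose components are of two kinds. A closed cycle made of matrices $B_a$ with both ends in $K$ gives a trace $\tr_\gamma$ of type \aLus. A chain starting at a vector $B_a e_j$ (with $\tail(a)\notin K$), passing through matrices with ends in $K$, and ending at a covector $e_i^TB_b$ (with $\head(b)\notin K$) gives exactly $(B_b\cdots B_a)_{ij}=\pij$ for the path $p=b\cdots a$, whose head and tail lie in $Q_0\setminus K$. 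The trivially acted-on scalars are length-one instances of \bLus, since they are contraction functions of arrows with both ends outside $K$. Hence every multilinear invariant is a polynomial in the functions \aLus\ and \bLus, and restitution gives the general statement. Conversely, these functions are visibly invariant, as noted before the statement.

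The main obstacle is making the reduction to multilinear invariants and the application of the FFT rigorous for the product group with mixed tensor types. Specifically, one must check that the relevant multilinear invariant space is the tensor product over $k\in K$ of the $\GL(V_k)$-invariants in $V_k^{\otimes m}\otimes V_k^{*\otimes m}$, which are spanned by permutations. One must then verify that composing these permutation contractions along the quiver yields precisely cycles and paths of the stated form. Path contractions arise from any path, including those passing through vertices of $Q_0\setminus K$ in the middle; these are products of shorter path contractions, or equal to them via matrix multiplication, so they are harmless. We expect the bookkeeping of index pairings, rather than any deep input, to be the bulk of the work.
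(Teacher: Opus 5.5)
Your proposal is correct. Note that the paper does not prove this statement itself: it cites Lusztig's Theorem~1.3 and an independent proof in [CY23], so your argument is a self-contained substitute rather than a reconstruction of the paper's proof.

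What you describe is essentially the Le Bruyn--Procesi argument rerun with extra vectors and covectors:
\begin{enumerate}
\item Polarise along the $G_K$-stable decomposition into matrices between vertices of $K$, columns of arrows with tail outside $K$, rows of arrows with head outside $K$, and scalars.
\item Use that the invariants of $\prod_{k\in K}\GL(v_k)$ on an external tensor product are the tensor product of the factorwise invariants, each spanned by permutation contractions.
\item Observe that in the resulting slot graph every matrix slot has in- and out-degree one, while column and row slots are sources and sinks. So the components are cycles inside $K$ (type \aLus) or chains from a column to a row (type \bLus).
\end{enumerate}
This also shows that in \bLus\ it suffices to take paths whose interior vertices all lie in $K$.

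If you are willing to use Le Bruyn--Procesi as a black box, there is a quicker reduction. Collapse $Q_0\setminus K$ to a single vertex $\infty$ of dimension one, and replace:
\begin{itemize}
\item each arrow $a$ with $\tail(a)\notin K$, $\head(a)\in K$ by $v_{\tail(a)}$ arrows $\infty\to\head(a)$;
\item each arrow with $\tail(a)\in K$, $\head(a)\notin K$ by $v_{\head(a)}$ arrows $\tail(a)\to\infty$;
\item each arrow with both ends outside $K$ by $v_{\head(a)}v_{\tail(a)}$ loops at $\infty$.
\end{itemize}
This identifies the representation spaces. Since diagonal scalars act trivially, the invariants of $\GL(1)\times G_K$ coincide with the $G_K$-invariants. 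Traces of cycles avoiding $\infty$ give \aLus. For a cycle through $\infty$, the trace is the product of the $1\times 1$ matrices attached to its successive returns to $\infty$, and each of these is a contraction function of type \bLus.

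Your route is independent of Le Bruyn--Procesi (only the FFT for $\GL$ is needed), at the cost of the polarisation and index bookkeeping. The reduction is shorter.
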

\begin{proof}
 See \cite[Theorem~1.3]{Lusztig98}. We provide an independent proof in \cite[Theorem~2.2]{CY23}. 
\end{proof}

The description of $\kk[\Rep(Q,v)]^{G_K}$ from Theorem~\ref{thm:Lusztig} interpolates between the description of the invariant ring $\kk[\Rep(Q,v)]^{\GL(v)}$ from \cite{LBP90} when $K=Q_0$, and the description of the coordinate ring $\kk[\Rep(Q,v)]$ from \eqref{eqn:kRep} when $K=\varnothing$.

 \subsection{Quiver algebras}
 A \emph{relation} for a quiver $Q$ is a finite linear combination of paths in $Q$, each with the same head and tail. Thus, for each relation $g$, there are vertices $\head(g), \tail(g)\in Q_0$ such that $g$ lies in the $\kk$-vector subspace $e_{\head(g)} \kk Q e_{\tail(g)}$ spanned by paths with tail $\tail(g)$ and head $\head(g)$; explicitly, 
\begin{equation}
    \label{eqn:relation}
 g = \sum_{1\leq l\leq \ell} \lambda_l p_l,
\end{equation}
 where $\lambda_1, \dots,\lambda_\ell\in \kk$, and paths $p_1, \dots, p_\ell$ satisfy $\tail(p_l)=\tail(g)$ and $\head(p_l)=\head(g)$ for all $1\leq l\leq \ell$. A representation $B=(B_a)\in \Rep(Q,v)$ 
 is said to \emph{satisfy the relation \eqref{eqn:relation}} if
 \begin{equation}
     \label{eqn:matrixrelation}
 \sum_{1\leq l \leq \ell}\lambda_l B_{p_l}= 0.
  \end{equation}
 The matrix equation  \eqref{eqn:matrixrelation} encodes a collection of $v_{\tail(g)}v_{\head(g)}$ equations in the entries of the matrices in $B$. Thus, the representation $B$ satisfies the relation \eqref{eqn:relation} if and only if $B$, when regarded as a point in the affine space $\Rep(Q,v)$, lies in the subscheme cut out by the equations
 \begin{equation}
     \label{eqn:variablerelation}
 \xpij{g}{ij}=\sum_{1\leq l \leq \ell} \lambda_l \plij =0
  \end{equation}
 for all $1\leq i\leq v_{\head(g)}$ and $1\leq j\leq v_{\tail(g)}$.

 A \emph{quiver algebra} is a $\kk$-algebra $A$ for which there is a quiver $Q$ and a surjective map
 $\beta\colon \kk Q \rightarrow A
$ of $\kk$-algebras. For each $i\in Q_0$, we use the same notation $e_i\in A$ for the class in $A$ of the trivial path.  Every quiver algebra is a $Q_0$-bigraded algebra $A=\bigoplus_{\tail, \head\in Q_0} e_{\head} A e_{\tail}$, where $e_{\head} A e_{\tail}$ is the $\kk$-vector subspace of $A$ spanned by the classes of paths with head and tail at vertex $\head$ and $\tail$ respectively. Write the kernel of $\beta$ as the two-sided ideal $\langle g_1, \dots, g_m\rangle$, where each $g_i$ is a relation in $Q$. 
  
  The \emph{representation scheme} $\Rep(A,v)$ is the subscheme of $\Rep(Q,v)$ cut out by the ideal generated by all of the functions $\xpij{g_k}{ij}$ associated to the generators $g_1, \dots, g_m$ of $\ker(\beta)$; this does not depend on the choice of generators of $\ker(\beta)$.  More invariantly, if we write $\tau\colon \kk[\Rep(Q,v)]\rightarrow \kk[\Rep(A,v)]$ for the surjective $\kk$-algebra homomorphism given by restriction of functions, then we may write
 \begin{equation}
 \label{eqn:kertau}
 \ker(\tau) =  \big\langle \xpij{p}{ij} \mid p\in e_{\head}\ker(\beta)e_{\tail} \text{ for }\head, \tail\in Q_0, \; 1\leq i\leq v_{\head}, \; 1\leq j\leq v_{\tail}\big\rangle.
 \end{equation}
In fact, we can describe polynomial functions on the representation scheme $\Rep(A,v)$ more directly using the quiver presentation $\beta\colon \kk Q\to A$ as follows.

 \begin{lemma}
 \label{lem:restrictfunctions}
 Fix vertices $\head, \tail\in Q_0$ and let $f\in e_{\head} A e_{\tail}$. For $1\leq i\leq v_{\head}$, $1\leq j\leq v_{\tail}$, the function $\xpij{f}{ij}:= \xpij{p}{ij}\vert_{\Rep(A,v)}$ in $\kk[\Rep(A,v)]$ obtained by restriction is well-defined independent of the choice of lift $p\in \beta^{-1}(f)$. Thus, for any $p\in e_{\head}\kk Q e_{\tail}$, we have 
  \[
  \tau(\xpij{p}{ij})=\xpij{p}{ij}\vert_{\Rep(A,v)} = \xpij{\beta(p)}{ij}.
  \]
 Similarly, the trace function of each cycle $\gamma$ in $Q$ satisfies $\tau(\tr_\gamma)=\tr_\gamma\vert_{\Rep(A,v)} = \tr_{\beta(\gamma)}$. 
 \end{lemma}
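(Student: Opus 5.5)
The argument reduces well-definedness to the description \eqref{eqn:kertau} of $\ker(\tau)$. Let $p, p'\in e_{\head}\kk Q e_{\tail}$ be two lifts of $f$, so $\beta(p)=\beta(p')=f$. Then $p-p'\in \ker(\beta)$. Since both lie in $e_{\head}\kk Q e_{\tail}$, we have $p-p' = e_{\head}(p-p')e_{\tail}\in e_{\head}\ker(\beta)e_{\tail}$.

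Next observe that $g\mapsto \xpij{g}{ij}$ is $\kk$-linear on $e_{\head}\kk Q e_{\tail}$. This holds by the definition of contraction functions for linear combinations of paths. Hence $\xpij{p}{ij}-\xpij{p'}{ij}=\xpij{p-p'}{ij}$, which is one of the listed generators of $\ker(\tau)$ in \eqref{eqn:kertau}. Therefore $\tau(\xpij{p}{ij})=\tau(\xpij{p'}{ij})$, so the restriction $\xpij{f}{ij}$ is independent of the lift. The displayed identity $\tau(\xpij{p}{ij})=\xpij{\beta(p)}{ij}$ is then just the definition applied with the lift $p$ of $f=\beta(p)$.

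For trace functions, let $\gamma$ be a cycle at vertex $i$, or a linear combination of such cycles. Write $\tr_\gamma=\sum_{k=1}^{v_i}\xpij{\gamma}{kk}$. This identity is immediate from $\tr(B_\gamma)=\sum_k (B_\gamma)_{kk}$ together with linearity. Given another lift $\gamma'$ of $\beta(\gamma)$ in $e_i\kk Q e_i$, the first part gives $\tau(\xpij{\gamma}{kk})=\tau(\xpij{\gamma'}{kk})$ for each $k$. Summing over $k$ shows that $\tr_{\beta(\gamma)}:=\tr_\gamma\vert_{\Rep(A,v)}$ is well defined and equals $\tau(\tr_\gamma)$.

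The only point needing care is the justification of \eqref{eqn:kertau} itself, which the paper states as given. If pressed, one would check it as follows. Every element of $e_{\head}\ker(\beta)e_{\tail}$ is a sum of terms $u g_k w$ with $u,w$ paths. The contraction $\xpij{u g_k w}{ij}$ expands by matrix multiplication as $\sum_{s,t}\xpij{u}{is}\,\xpij{g_k}{st}\,\xpij{w}{tj}$, which lies in the ideal generated by the $\xpij{g_k}{st}$. Beyond this, the proof is routine.
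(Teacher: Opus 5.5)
Your proof is correct and is the natural argument. Two lifts in $e_{\head}\kk Q e_{\tail}$ differ by an element of $e_{\head}\ker(\beta)e_{\tail}$, so by linearity their contraction functions differ by a generator of $\ker(\tau)$ from \eqref{eqn:kertau}, and the trace statement follows from $\tr_\gamma=\sum_k \xpij{\gamma}{kk}$. The paper does not argue this itself; it simply cites \cite[Lemma~3.1]{CY23}, and your proof, including your check of \eqref{eqn:kertau} by expanding $\xpij{ug_kw}{ij}$, should cover what that citation provides.
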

 \begin{proof}
See \cite[Lemma~3.1]{CY23}.
\end{proof}
 
 More generally, since $\GL(v)$ is reductive and $\kk$ has characteristic zero, the restriction of $\tau$ to the $G_K$-invariant subalgebra defines a surjective $\kk$-algebra homomorphism 
 \begin{equation}
 \label{eqn:tauK}
 \tau_K\colon \kk[\Rep(Q,v)]^{G_K}\rightarrow \kk[\Rep(A,v)]^{G_K}.
 \end{equation}  
 The next result extends Lusztig's Theorem~\ref{thm:Lusztig} to quiver algebras and computes $\ker(\tau_K)$.
 
 \begin{theorem}
 \label{thm:RepAinvariants}
  For any subset $K\subseteq Q_0$, the $\kk$-algebra $\kk[\Rep(A,v)]^{G_K}$ is generated by the functions:
 \begin{enumerate}
 \item[\aLus] $\tr_{\beta(\gamma)}$ for cycles $\gamma$ in $Q$ that traverse only arrows in $Q$ with head and tail in $K$; and 
 \item[\bLus] $\xpij{\beta(p)}{ij}$ for paths $p$ in $Q$ with head and tail in $Q_0\setminus K$, where $1\leq i\leq v_{\head(p)}$ and $1\leq j\leq v_{\tail(p)}$.
 \end{enumerate}
 Moreover, $\ker(\tau_K)$ is generated by the functions $\tr_\gamma$, where $\gamma\in e_k \ker(\beta)e_k$ for some $k\in K$, and the functions $\xpij{p}{ij}$, where $p\in e_{\head}\ker(\beta)e_{\tail}$ for some $\head, \tail\in Q_0\setminus K$, with $1\leq i\leq v_{\head}$ and $1\leq j\leq v_{\tail}$.
 \end{theorem}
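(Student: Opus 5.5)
The generation statement comes from pushing Lusztig's generators through $\tau_K$; the kernel statement needs a Reynolds-operator argument. Since $\tau_K$ from \eqref{eqn:tauK} is surjective, $\kk[\Rep(A,v)]^{G_K}$ is generated by the images under $\tau$ of any generating set of $\kk[\Rep(Q,v)]^{G_K}$. By Theorem~\ref{thm:Lusztig} such a set consists of the trace functions $\tr_\gamma$ of type \aLus{} and the contraction functions $\pij$ of type \bLus{}. Lemma~\ref{lem:restrictfunctions} gives $\tau(\tr_\gamma)=\tr_{\beta(\gamma)}$ and $\tau(\pij)=\xpij{\beta(p)}{ij}$, which proves the first assertion.

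For the kernel, let $I\subseteq \kk[\Rep(Q,v)]^{G_K}$ be the ideal generated by the listed functions. These functions are $G_K$-invariant. For $\xpij{p}{ij}$ with $\head,\tail\notin K$ this is clear. For $\tr_\gamma$ with $\gamma\in e_k\ker(\beta)e_k$, it holds because a trace of a linear combination of cycles is conjugation invariant. They also vanish on $\Rep(A,v)$: the first kind by \eqref{eqn:kertau}, and $\tr_\gamma$ because $B_\gamma=0$ on $\Rep(A,v)$. Hence $I\subseteq\ker(\tau_K)$.

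For the reverse inclusion, note that $G_K$ is reductive in characteristic zero, so there is a Reynolds operator $R\colon \kk[\Rep(Q,v)]\to \kk[\Rep(Q,v)]^{G_K}$. It is a $\kk[\Rep(Q,v)]^{G_K}$-module map and restricts to the identity on invariants. We have $\ker(\tau_K)=\ker(\tau)^{G_K}=R(\ker(\tau))$. By \eqref{eqn:kertau}, every element of $\ker(\tau)$ is a sum of terms $h\cdot\xpij{g}{ij}$ with $g\in e_{\head}\ker(\beta)e_{\tail}$ and $h$ a monomial in the arrow coordinates \eqref{eqn:kRep}. The task is therefore to show $R(h\,\xpij{g}{ij})\in I$.

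This reduction step is the main obstacle. The idea is to polarise, following the strategy behind Theorem~\ref{thm:Lusztig} and \cite{LBP90}. Each coordinate $\aij$ is written as a contraction of the tensor $B_a$, so $h\,\xpij{g}{ij}$ is a multilinear contraction of tensors built from the $B_a$ and $B_g$. First Fundamental Theorem type reasoning for $\prod_{k\in K}\GL(v_k)$ then says that $R$ of such a term is a linear combination of complete contractions along all indices at vertices in $K$. Each such contraction is a product of two kinds of factors: traces of closed words through vertices in $K$, and $(i,j)$-entries of words from a vertex outside $K$ to a vertex outside $K$ that pass through $K$ in between. The factor containing $B_g$ is then either
\[
\tr(B_{q'}B_gB_{q})=\tr_{qq'g'}\quad\text{or}\quad \big(B_{q'}B_gB_q\big)_{ij}=\xpij{q'gq}{ij},
\]
where $g'$ is the corresponding cyclic rotation. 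Because $\ker(\beta)$ is a two-sided ideal, $q'gq\in\ker(\beta)$. In the trace case we rotate to a cycle based at a vertex of $K$, giving an element of some $e_k\ker(\beta)e_k$. The edge case arises when the cycle passes through no vertex of $K$; then we instead write the trace as a sum of entries of $\xpij{\cdot}{ii}$ at a vertex outside $K$, which is one of the listed contraction generators. So each term lies in $I$ and $\ker(\tau_K)=I$. Alternatively, I would cite the analogous argument in \cite[Theorem~2.2]{CY23} and check that the same explicit description of $R$ on monomials applies with $B_g$ in place of one arrow.
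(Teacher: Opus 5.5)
Your proposal is correct. The paper gives no argument of its own for this theorem; it simply cites \cite[Theorem~1.2]{CY23}, so there is no detailed route to compare against. Your first paragraph is exactly the deduction the surrounding set-up is designed for: surjectivity of $\tau_K$ in \eqref{eqn:tauK}, together with Lemma~\ref{lem:restrictfunctions} applied to Lusztig's generators. Your Reynolds-operator argument is then a valid self-contained proof of the kernel statement, provided you make the following two points explicit.

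\emph{Why $B_g$ stays an unbroken block.} Write your monomial as $h=\prod_s \xpij{a_s}{i_sj_s}$. The map $B\mapsto (B_{a_1},\dots,B_{a_N},B_g)$ is a $G_K$-equivariant morphism into a product of matrix spaces, because $B_g$ transforms as $g_{\head(g)}B_g g_{\tail(g)}^{-1}$. Hence $R(h\,\xpij{g}{ij})$ is the pullback of $R(F)$, where $F(C_1,\dots,C_N,D)=\prod_s (C_s)_{i_sj_s}\cdot D_{ij}$, and the multilinear FFT applies to $F$. This is what lets the two-sidedness of $\ker(\beta)$ do its job. Had you expanded $\xpij{g}{ij}$ in arrow coordinates first, the contractions could split up the paths occurring in $g$.

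\emph{Trace factors.} In a complete contraction only indices at vertices of $K$ are summed, so every junction of a closed loop lies in $K$. A trace factor containing $B_g$ is therefore $\tr(B_{q'}B_gB_q)=\tr_{q'gq}$ with $q'gq\in e_x\ker(\beta)e_x$ for the junction vertex $x\in K$, and no rotation is needed. Your expression $\tr_{qq'g'}$ is garbled, and you should never rotate $g$ itself, since it is a linear combination of paths. In particular your ``edge case'' of a loop avoiding $K$ never arises, although your treatment of it would be harmless.

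Since Theorem~\ref{thm:Lusztig} is already available, you could also avoid re-running the FFT. Adjoin one arrow $c\colon \tail(g)\to\head(g)$ to $Q$ and apply Theorem~\ref{thm:Lusztig} to the enlarged quiver. Lusztig's generators there are homogeneous in the coordinates of $c$, so take the component of $R(h\,\xpij{c}{ij})$ of degree one in those coordinates. Then pull back along the $G_K$-equivariant graph embedding $B\mapsto (B,B_g)$. The generators linear in $c$ pull back precisely to the functions listed in the statement, and the generators not involving $c$ pull back to $G_K$-invariants. This is the same idea as yours, packaged so that the only invariant theory used is the theorem already quoted in the paper.
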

 \begin{proof}
See \cite[Theorem~1.2]{CY23}.
  \end{proof} 
 
\begin{remark}
\label{rem:finitelymany}
 Only finitely many of the generators listed in Theorem~\ref{thm:RepAinvariants} are required because the algebra $\kk[\Rep(A,v)]$ is finitely generated over $\kk$ and $G_K$ is reductive.
\end{remark}

\section{The preprojective algebra of an extended ADE graph}

We now recall and extends results on Nakajima quiver varieties for both the McKay quiver and a framed version of the McKay quiver associated to the finite subgroup $\Gamma$. 
 
 \subsection{The preprojective algebra of the McKay quiver}
 Let $\Gamma\subset \SL(2,\CC)$ be a finite subgroup and write $V$ for the given two-dimensional representation of $\Gamma$. List the isomorphism classes of irreducible representations of $\Gamma$ as $\{\rho_0, \rho_1, \dots, \rho_r\}$, where $\rho_0$ is the trivial representation. The McKay quiver $\McKay$ has vertex set $\{0,1,\dots, r\}$, and for $0\leq i,j\leq r$, there are $\dim \Hom_\Gamma (\rho_i, \rho_j\otimes V)$ arrows from $i$ to $j$.  It follows from McKay~\cite{McKay80} that $\McKay$ is the doubled quiver of the extended Dynkin diagram of type ADE associated to $\Gamma$, that is, $\McKay$ is obtained from the Dynkin diagram by replacing each edge by two arrows, one for each orientation of the edge. For each arrow $a\in \McKay_1$, let $a^*\in \McKay_1$ denote the arrow corresponding to the same edge with opposite orientation. Choose a sign function $\epsilon\colon \McKay_1\to \{\pm 1\}$ such that $\epsilon(a)\neq \epsilon(a^*)$ for each arrow $a$. Let $\kk\McKay$ denote the path algebra of the McKay quiver. The \emph{preprojective algebra} of the McKay quiver is defined to be 
 \begin{equation}
     \label{eqn:PiGammaquotient}
 \PiMcKay:= \kk \McKay/\left\langle \textstyle{ \sum_{\{a\in \McKay_1 \vert \head(a)=i\}} \epsilon(a) aa^*  \;\vert\; 0\leq i\leq r}\right\rangle.
 \end{equation}
 
   For $R_0:= \kk[x,y]^\Gamma$, the $\Gamma$-module $\kk[x,y]$ decomposes into isotypical components $\bigoplus_{0\leq i\leq r} R_i$, where $R_i$ is the reflexive $R_0$-module obtained as the sum of all $\Gamma$-submodules of $\kk[x,y]$ that are isomorphic to $\rho_i$. For $0\leq i\leq r$, write $e_i$ for the idempotent in $\PiMcKay$ for vertex $i$ in $\McKay$. The isomorphism of noncommutative $\kk$-algebras 
  \begin{equation}
  \label{eqn:PiEnd}
  \PiMcKay\cong \End_{R_0}\bigg( \bigoplus_{0\leq i\leq r} R_i\bigg)
  \end{equation}
  (see, for example, \cite[Lemma~3.1]{Craw21}) restricts to the isomorphism of $\kk$-algebras 
  \begin{equation}
      \label{eqn:ADEhypersurface}
  e_0\PiMcKay e_0\cong R_0 = \kk[x,y]^\Gamma\cong   \kk[z_1, z_2, z_3]/(f_\Gamma)
  \end{equation}
  appearing in the work of Crawley-Boevey--Holland~\cite[Theorem~8.10]{CBHolland}, where $f_\Gamma\in \kk[z_1, z_2, z_3]$ is the defining polynomial for the Kleinian singularity $\mathbb{A}^2/\Gamma\cong (f_\Gamma=0)\subset \mathbb{A}^3$.  

 The vector $\delta=(\delta_i)\in \NN^{r+1}$ defined by $\delta_i=\dim(\rho_i)$ for $0\leq i\leq r$ is the minimal imaginary root of the affine root system of type ADE. For each positive integer $n\geq 1$, the quotient description of $\PiMcKay$ from \eqref{eqn:PiGammaquotient} determines the representation scheme $\Rep(\PiMcKay,n\delta)$ as in \eqref{eqn:kertau} that parametrises $\PiMcKay$-modules of dimension vector $n\delta$. The group $\GL(n\delta):=\prod_{0\leq i\leq r} \GL(n\dim \rho_i)$ acts on $\Rep(\PiMcKay,n\delta)$ by conjugation, and the affine quotient is 
 \[
 \Rep(\PiMcKay,n\delta)\git \GL(n\delta):=\Spec \kk[\Rep(\PiMcKay,n\delta)]^{\GL(n\delta)}.
 \]
 For $n=1$, Crawley-Boevey~\cite[Lemma~3,3]{CBdecomp} builds on work of Kronheimer~\cite{Kronheimer89} and his work with Holland~\cite[Theorem~2]{CBHolland} to show that the $\kk$-algebra
 \begin{equation}
     \label{eqn:Kroneimeraffine}
\kk[\Rep(\PiMcKay,\delta)]^{\GL(\delta)}\cong \kk[x,y]^\Gamma
 \end{equation}
 is  generated by trace functions of cycles $p_1, p_2, p_3$ in $\McKay$ through vertex $0$ corresponding to the chosen generators of the algebra from \eqref{eqn:ADEhypersurface}. In particular, $\Rep(\PiMcKay,\delta)\git \GL(\delta)\cong \mathbb{A}^2/\Gamma$.

  \subsection{The preprojective algebra of the framed McKay quiver}
 
   The \emph{framed McKay quiver} $Q$ is obtained from the McKay quiver $\McKay$ by adding a framing vertex, denoted $\infty$, and a pair of arrows, denoted $b\colon \infty\to 0$ and $b^*\colon 0\to \infty$. Equivalently, $Q$ is the doubled quiver of the graph obtained from the extended Dynkin diagram of type ADE associated to $\Gamma$ by adding a framing vertex $\infty$ and an edge joining $\infty$ to vertex $0$.  The vertex set of $Q$ is $Q_0=\{\infty, 0, 1, \dots, r\}$. As for the McKay quiver,  for each arrow $a\in Q_1$, write $a^*$ for the same edge of the framed, extended Dynkin graph with opposite orientation. Extend the sign function $\epsilon$ introduced on arrow set of the McKay quiver by setting $\epsilon(b)=1$ and $\epsilon(b^*)=-1$ to obtain a sign function $\epsilon\colon Q_1\to \{\pm 1\}$. 
   
   Let $\kk Q$ denote the path algebra of the framed McKay quiver. The \emph{preprojective algebra} $\Pi$ for the framed McKay quiver $Q$ is 
   \begin{equation}
     \label{eqn:Pi}
 \Pi:= \kk Q / \left\langle  \textstyle{\sum_{\{a\in Q_1 \vert \head(a)=i\}} \epsilon(a) aa^*  \mid i\in Q_0}\right\rangle.
 \end{equation}
 For $n\geq 1$, consider the vector $v = (1,n\delta)\in \NN^{Q_0}=\NN\times \NN^{r+1}$ which we also write as $v=\rho_\infty+n\delta$ for $\rho_\infty=(1,0)\in \mathbb{N}\times \mathbb{N}^{r+1}$. The quotient description of $\Pi$ from \eqref{eqn:Pi} determines the representation scheme $\Rep(\Pi,v)$ as in \eqref{eqn:kertau} that parametrises $\Pi$-modules of dimension vector $v$. The group 
 \[
 \GL(v):=\GL(1)\times \prod_{0\leq i\leq r} \GL(n\dim \rho_i)
 \]
  acts on $\Rep(\Pi,v)$ by conjugation, and we write
 \[
 \mathfrak{M}_0:=\Rep(\Pi,v)\git \GL(v):=\Spec \kk[\Rep(\Pi,v)]^{\GL(v)}
 \]
 for the affine quotient. The decomposition theorem of Crawley--Boevey~\cite[Theorem~1.1]{CBdecomp} is stated in terms of the underlying reduced subschemes of affine quotients. Here we draw key details out from the proofs in \cite{CBdecomp} to draw conclusions for the natural scheme structure on these quotients. 

\begin{proposition}
\label{prop:M0Sym^n}
  For $n\geq 1$, the scheme $\mathfrak{M}_0$ is reduced and irreducible, and there are isomorphisms 
  \begin{equation}
      \label{eqn:M0isoms}
  \mathfrak{M}_0\cong \Rep(\PiMcKay,n\delta)\git \GL(n\delta)\cong \Sym^n(\mathbb{A}^2/\Gamma)
 \end{equation}
  of affine varieties over $\kk$.  Moreover, the coordinate ring $\kk[\Rep(\Pi,v)]^{\GL(v)}$ of $\mathfrak{M}_0$ is generated as a $\kk$-algebra by the trace functions of cycles in the quiver $Q$ through vertex $0
    \in Q_0$.
\end{proposition}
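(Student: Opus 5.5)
We will prove the two isomorphisms in \eqref{eqn:M0isoms} separately and then deduce reducedness, irreducibility and the statement about generators.

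\emph{The first isomorphism.} Apply Theorem~\ref{thm:RepAinvariants} with $K=Q_0$ to the algebra $\Pi$. This shows that $\kk[\Rep(\Pi,v)]^{\GL(v)}$ is generated by the trace functions $\tr_\gamma$ of cycles $\gamma$ in $Q$. Since $v_\infty=1$, any cycle through $\infty$ has the form $\gamma=b^*pb$ up to rotation, so $\tr_\gamma=\tr(B_{b^*}B_pB_b)$. Cyclic invariance of trace gives $\tr_\gamma=\tr(B_bB_{b^*}B_p)$, a trace of a cycle at vertex $0$ with $bb^*$ inserted. The preprojective relation at $\infty$ reads $\epsilon(b^*)b^*b=0$, i.e.\ $B_{b^*}B_b=0$ on $\Rep(\Pi,v)$. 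Hence every invariant reduces to traces of cycles in $\McKay$, possibly passing through the factor $B_bB_{b^*}$ at vertex $0$.

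Now use the relation at vertex $0$: $B_bB_{b^*}$ equals (up to sign) the McKay preprojective expression $\sum\epsilon(a)B_aB_{a^*}$. Replacing each occurrence of $bb^*$ by this expression rewrites every trace as a trace of a cycle in $\McKay$. Next consider the closed subscheme $\Rep(\PiMcKay,n\delta)\subset \Rep(\Pi,v)$ given by $B_b=B_{b^*}=0$, and the retraction sending $B$ to its $\McKay$-part. The retraction is not a morphism to $\Rep(\PiMcKay,n\delta)$, since the relation at $0$ fails there. Following Crawley-Boevey, we therefore argue on invariants instead. We get a surjection $\kk[\Rep(\Pi,v)]^{\GL(v)}\to \kk[\Rep(\PiMcKay,n\delta)]^{\GL(n\delta)}$ from restriction, using reductivity. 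Injectivity will follow if every invariant vanishing on the $\McKay$-locus vanishes identically. For this we need that the closed $\GL(v)$-orbits in $\Rep(\Pi,v)$ all have $B_b=B_{b^*}=0$. Indeed, a semisimple $\Pi$-module of dimension $v$ splits as $S_\infty\oplus M$ with $M$ a $\PiMcKay$-module, because the simple containing vertex $\infty$ has dimension vector $\rho_\infty+m\delta$. Since $(\rho_\infty+m\delta,\rho_\infty+m\delta)=2$ for the Tits form, Crawley-Boevey's criterion forces $m=0$. Scheme-theoretically, however, this orbit argument only gives a bijection, so we also want to show directly that each trace involving $bb^*$ equals a polynomial in $\McKay$-traces as functions. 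The rewriting above achieves this.

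\emph{The second isomorphism and reducedness.} For $\Rep(\PiMcKay,n\delta)\git\GL(n\delta)\cong\Sym^n(\mathbb{A}^2/\Gamma)$, we use Crawley-Boevey's decomposition: semisimple modules of dimension $n\delta$ are direct sums of $n$ modules of dimension $\delta$. Combined with \eqref{eqn:Kroneimeraffine}, this gives a bijective finite morphism $\Sym^n(\mathbb{A}^2/\Gamma)\to$ (the quotient). The map is built from the $n$-fold product $\Rep(\PiMcKay,\delta)^n\to\Rep(\PiMcKay,n\delta)$ (block diagonal), which is $S_n$-invariant. Since the target $\Sym^n(\mathbb{A}^2/\Gamma)$ is normal, the issue is to show the quotient ring embeds into $\kk[x,y]^{\Gamma\wr S_n}$, i.e.\ that the quotient is reduced.

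\textbf{Main obstacle.} We expect reducedness of the natural scheme structure to be the main obstacle. Our plan is to identify $\kk[\Rep(\PiMcKay,n\delta)]^{\GL(n\delta)}$ with $\big(e_0\PiMcKay e_0\big)$-type data via the Gan--Ginzburg flatness result. Gan--Ginzburg show that the moment map $\mu^{-1}(0)$ is a reduced complete intersection for dimension vector $v$. Hence $\Rep(\Pi,v)$ is reduced, and so are its invariants. Reducedness transfers to $\mathfrak{M}_0$, and then the bijective map to the normal variety $\Sym^n(\mathbb{A}^2/\Gamma)$ is birational (an isomorphism over distinct points). We conclude by showing the trace functions of cycles at $0$ generate, so the map is a closed immersion. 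Being also finite birational onto a normal variety, it is then an isomorphism. Irreducibility follows from irreducibility of $\Sym^n$. The generation statement comes from the rewriting step, which routes cycles avoiding $0$ through $0$ by the preprojective relations, as in Crawley-Boevey's argument for $n=1$.
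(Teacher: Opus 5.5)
Your ingredients are the ones the paper uses: Gan--Ginzburg reducedness of $\Rep(\Pi,v)$, Crawley-Boevey's description of the closed orbits, and the power-sum argument from the proof of his Theorem~3.4. The gap is in how you turn the bijection into an isomorphism of schemes. You rightly note that the orbit argument only gives a bijection. You then claim that rewriting traces through $bb^*$ as $\McKay$-traces gives injectivity of $\kk[\Rep(\Pi,v)]^{\GL(v)}\to\kk[\Rep(\PiMcKay,n\delta)]^{\GL(n\delta)}$. It does not. The rewriting only expresses invariants as polynomials in $\McKay$-traces evaluated on the $\McKay$-part of $B$, and on $\Rep(\Pi,v)$ that part satisfies the relation at $0$ only up to $B_bB_{b^*}$.

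For example, take a cycle $c$ at $0$ and let $a$ run over the arrows of $\McKay$ with head $0$. The function $\tr\big((\sum_a\epsilon(a)aa^*)c\big)$ restricts to zero on the locus $B_b=B_{b^*}=0$. On $\Rep(\Pi,v)$, however, it equals $-B_{b^*}B_cB_b$, and your rewriting does not show that this scalar function vanishes. What kills it is reducedness. It vanishes at every closed point of $\mathfrak{M}_0$, since closed orbits have $B_b=B_{b^*}=0$, so it is nilpotent, and hence zero because $\mathfrak{M}_0$ is reduced. This is exactly the paper's argument: $\nu_1$ is a closed immersion that is bijective on closed points, and Lemma~\ref{lem:reducedness} makes it an isomorphism because $\mathfrak{M}_0$ is reduced. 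You invoke Gan--Ginzburg only later, for the second isomorphism, but it is needed already here. Moreover, your second step needs reducedness of $\Rep(\PiMcKay,n\delta)\git\GL(n\delta)$, which you can only get by transporting it across the first isomorphism. So as written the chain breaks at its first link.

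The generation statement and the final step are also not justified. Rotation handles cycles through $\infty$. But ``routing'' cycles of $\McKay$ that avoid $0$ through $0$ would require showing that each such cycle of positive length is, modulo commutators and the preprojective relations, a linear combination of cycles through $0$. That is a vanishing statement for the positive-degree part of $HH_0$ of the Dynkin preprojective algebra, and you give no argument for it.

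It is also unnecessary. The map $\Sym^n(\mathbb{A}^2/\Gamma)\to\mathfrak{M}_0$ is a closed immersion because it pulls back $\tr_{p_1^{r_1}p_2^{r_2}p_3^{r_3}}$ to polarised power sums, and these generate $\kk[\Sym^n(\mathbb{A}^2/\Gamma)]$. This uses nothing about generators of $\kk[\mathfrak{M}_0]$. Being bijective onto the reduced scheme $\mathfrak{M}_0$, it is an isomorphism, and generation is read off afterwards, which is how the paper proceeds.

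Your appeal to ``finite birational onto a normal variety'' points the wrong way. The normal variety $\Sym^n(\mathbb{A}^2/\Gamma)$ is the source, $\mathfrak{M}_0$ is not yet known to be normal, and you never construct a morphism $\mathfrak{M}_0\to\Sym^n(\mathbb{A}^2/\Gamma)$.

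A minor slip: $(\rho_\infty+m\delta,\rho_\infty+m\delta)=2-2m$, not $2$. The quantity $1-\tfrac12(\alpha,\alpha)$ equals $m$ at $\alpha=\rho_\infty+m\delta$, which is its value $0$ on $\rho_\infty$ plus $m$ times its value $1$ on $\delta$. So the strict inequality in Crawley-Boevey's criterion fails for $m\geq 1$. Also, a priori the simple summand containing $\infty$ need not have dimension $\rho_\infty+m\delta$, which is why the paper cites his Lemma~5.4.
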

\begin{proof}
 Gan--Ginzburg~\cite[Theorem~3.3.3(iii)]{GanGinzburg06} shows that the scheme $\Rep(\Pi,v)$ is reduced, and hence so is $\mathfrak{M}_0=\Rep(\Pi,v)\git \GL(v)$. Bellamy-Schedler~\cite[Theorem~1.2]{BS21} show that $\mathfrak{M}_0$ is irreducible. At this stage, we already obtain the isomorphism $\mathfrak{M}_0\cong \Sym^n(\mathbb{A}^2/\Gamma)$ from the decompostion theorem and additional work of Crawley--Boevey~\cite[Lemma~9.2]{CBmoment}, just as described in \cite[Lemma~4.5]{BC20}. However, we require more to obtain our assertion about generators of $\kk[\Rep(\Pi,v)]^{\GL(v)}$, and for this we revisit details from the proof of \cite[Theorem~1.1]{CBdecomp}. 

 Apply \cite[Lemma~5.4]{CBdecomp} by regarding $Q_0$ as the disjoint union $\{\infty\}\cup \{0,1,\dots, r\}$ and  $v=\rho_\infty+n\delta$, where  $\rho_\infty:=(1,0)\in \NN\times \NN^{r+1}$. Thus, the dimension vector of any composition factor of a $\Pi$-module of dimension vector $v$ is either $\rho_\infty$ or it has support in $\{0,1,\dots, r\}$. The proof of \cite[Lemma~5.1]{CBdecomp} then constructs a closed immersion 
 \begin{equation}
     \label{eqn:nu_1}
       \nu_1\colon 
  \Rep(\PiMcKay,n\delta)\git \GL(n\delta)\longrightarrow \mathfrak{M}_0
   \end{equation}
  (determined by the vanishing of maps for the arrows $b, b^*$ in $Q$; here, we omit a factor of $\Spec(\kk)$ for simplicity) and shows that it's a bijection on closed points, so $\nu_1$ induces an isomorphism of  underlying reduced subschemes. The scheme $\mathfrak{M}_0$ is reduced, so Lemma~\ref{lem:reducedness} implies that $\nu_1$ is an isomorphism of schemes. This gives the left-hand isomorphism in \eqref{eqn:M0isoms}. To construct the right-hand isomorphism in \eqref{eqn:M0isoms}, notice that the isomorphism \eqref{eqn:nu_1} shows that the scheme $\Rep(\PiMcKay,n\delta)\git \GL(n\delta)$ is reduced. In particular, both domain and codomain of the morphism 
  \begin{equation}
      \label{eqn:nu_2}
  \nu_2\colon \Sym^n(\mathbb{A}^2/\Gamma)\longrightarrow \Rep(\PiMcKay,n\delta)\git \GL(n\delta)
  \end{equation}
   constructed in the proof of \cite[Theorem~3.4]{CBdecomp} are reduced schemes. Crucially, the running assumption from \emph{loc.cit.} that \emph{all schemes are given the reduced scheme structure} is therefore automatic in our case, so $\nu_2$ is also an isomorphism of schemes by \cite[Theorem~3.4]{CBdecomp} as required. 
  
  To prove the final statement, the proof of \cite[Theorem~3.4]{CBdecomp} shows that the coordinate ring $\kk[\Rep(\PiMcKay,n\delta)]^{\GL(n\delta)}$ of $\Rep(\PiMcKay,n\delta)\git \GL(n\delta)$ is generated by the trace functions $\tr_p$ for $\Rep(\PiMcKay,n\delta)$ associated to cycles $p$ through vertex $0$ in $\McKay$, each of which takes the form $p=p_1^{r_1}p_2^{r_2}p_3^{r_3}$ for some $r_1, r_2, r_3\geq 0$; here, $p_1, p_2, p_3$ are the cycles in $\McKay$ through vertex $0$ whose trace functions generate the algebra \eqref{eqn:Kroneimeraffine} above.  It remains to pullback functions along the isomorphism $\nu_1^{-1}$ to conclude that $\kk[\Rep(\Pi,v)]^{\GL(v)}$ is generated by the trace functions $\tr_\gamma$ associated to cycles $\gamma$ in $Q$ through $0$ (recall from \eqref{eqn:Pi} that $b^*b=0$ in $\Pi$, while $bb^*$ can be expressed in terms of arrows  with heads and tails in $\{0,1,\dots, r\})$.  
\end{proof}

\begin{lemma}
\label{lem:reducedness}
 Let $f\colon X\to Y$ be a closed immersion of schemes with $Y$ reduced and Noetherian, such that $f$ induces an isomorphism on the underlying reduced schemes. Then $f$ is an isomorphism.
\end{lemma}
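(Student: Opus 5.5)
The plan is to reduce to the affine case and then argue with ideals. Being a closed immersion and being an isomorphism are both local on the target. So we may cover $Y$ by affine opens $\Spec R$ and assume $Y=\Spec R$, with $R$ a reduced Noetherian ring. A closed immersion into an affine scheme is the closed subscheme cut out by an ideal, so $X=\Spec(R/I)$ for some ideal $I\subseteq R$, and $f$ corresponds to the quotient map $R\to R/I$. The goal then becomes showing $I=0$.

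Next I translate the hypothesis on reduced schemes into a statement about ideals. The underlying reduced scheme of $X$ is $\Spec\big(R/\sqrt{I}\big)$. Since $R$ is reduced, the underlying reduced scheme of $Y$ is $Y$ itself. The induced morphism $f_{\mathrm{red}}\colon X_{\mathrm{red}}\to Y_{\mathrm{red}}=Y$ is the closed immersion given by the surjection $R\to R/\sqrt{I}$. If this is an isomorphism, then the surjection is injective, so $\sqrt{I}=0$.

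Finally, $I\subseteq\sqrt{I}=0$, so $I=0$ and $f$ is an isomorphism on each affine chart. These local isomorphisms are the restrictions of the single morphism $f$, so no gluing compatibility needs checking, and $f$ is an isomorphism globally. Equivalently, one can phrase the argument sheaf-theoretically: $X$ is defined by a quasi-coherent ideal sheaf $\mathcal{I}\subseteq\mathcal{O}_Y$, and the hypothesis says its radical is the nilradical of $\mathcal{O}_Y$, which is zero.

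There is no real obstacle. The only point needing care is identifying $X_{\mathrm{red}}\to Y_{\mathrm{red}}$ with the closed immersion defined by $\sqrt{I}$, which follows from the standard description of the reduced subscheme of $\Spec(R/I)$. The Noetherian hypothesis is not actually needed for this argument.
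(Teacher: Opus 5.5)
Your proof is correct, but it takes a different route from the paper's. You work locally and ideal-theoretically: over an affine open $\Spec R\subseteq Y$ you write $X=\Spec(R/I)$, identify $f_{\mathrm{red}}$ with the quotient $R\to R/\sqrt{I}$, and read off $I\subseteq\sqrt{I}=0$.

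The paper argues globally and formally. It takes $g\colon Y\to X_{\mathrm{red}}$ to be the inverse of $f_{\mathrm{red}}$ and $h\colon X_{\mathrm{red}}\to X$ to be the inclusion. It notes that $f\circ h\circ g\colon Y\to Y$ is a closed immersion, and uses the Noetherian hypothesis to conclude that such a self-immersion is an isomorphism. It then deduces that $f$ is an isomorphism. This last step implicitly uses that $f$, being a closed immersion, is a monomorphism, and a monomorphism with a right inverse is an isomorphism.

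The paper's version avoids choosing affine charts. Yours is more elementary and shows exactly what is needed, namely only that $Y$ is reduced. Your remark that the Noetherian hypothesis is superfluous applies to the paper's argument as well: since $Y_{\mathrm{red}}=Y$, one has $f\circ h=f_{\mathrm{red}}$. Hence $f\circ h\circ g=\id_Y$ on the nose, and no finiteness condition is actually used there either.
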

 \begin{proof}
   Let $g\colon Y\to X_{\operatorname{red}}$ be the inverse of the isomorphism on reduced subschemes induced by $f$, and let $h\colon X_{\operatorname{red}}\to X$ be the natural inclusion. Then $f\circ h\circ g\colon Y\to Y$ is a closed immersion. Since $Y$ is Noetherian, this is an isomorphism and hence so is $f$.
 \end{proof}
 
  We do not know whether the scheme $\Rep(\PiMcKay,n\delta)$ is reduced, but we record the following point which provides a $\Gamma$-analogue for Gan--Ginzburg's result on the commuting variety~\cite[Theorem~1.4]{GanGinzburg06}. 

 \begin{corollary}
 \label{cor:GG}
  The affine quotient $\Rep(\PiMcKay,n\delta)\git \GL(n\delta)$ is reduced for all $n\geq 1$. 
  \end{corollary}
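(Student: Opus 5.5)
This corollary should follow directly from the proof of Proposition~\ref{prop:M0Sym^n}. The plan is to show that $\Rep(\PiMcKay,n\delta)\git \GL(n\delta)$ is isomorphic, as a scheme, to $\mathfrak{M}_0$. The scheme $\mathfrak{M}_0$ is reduced: by Gan--Ginzburg~\cite[Theorem~3.3.3(iii)]{GanGinzburg06} the representation scheme $\Rep(\Pi,v)$ for the framed quiver is reduced. Its invariant subring $\kk[\Rep(\Pi,v)]^{\GL(v)}$ is a subring of a reduced ring, so it is reduced as well.

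Next we use the closed immersion $\nu_1$ from \eqref{eqn:nu_1}. It is obtained by setting the arrows $b,b^*$ to zero, and it identifies $\Rep(\PiMcKay,n\delta)\git\GL(n\delta)$ (times $\Spec\kk$ for the vertex $\infty$) with a closed subscheme of $\mathfrak{M}_0$. By Crawley-Boevey's analysis \cite[Lemmas~5.1, 5.4]{CBdecomp}, $\nu_1$ is bijective on closed points. Since $\mathfrak{M}_0$ is of finite type over an algebraically closed field, a closed immersion that is bijective on closed points induces an isomorphism of underlying reduced subschemes.

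Lemma~\ref{lem:reducedness}, applied with $Y=\mathfrak{M}_0$ (reduced and Noetherian), then shows that $\nu_1$ is an isomorphism of schemes. Hence its domain $\Rep(\PiMcKay,n\delta)\git\GL(n\delta)$ is reduced, which is the statement. This is exactly the observation already recorded within the proof of Proposition~\ref{prop:M0Sym^n}.

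The only delicate point is to confirm that the morphism $\nu_1$ constructed in \cite{CBdecomp} really is a closed immersion of the natural (not necessarily reduced) scheme structures, and not merely of reduced ones. To check this, I would verify directly that the induced map on invariant rings $\kk[\Rep(\Pi,v)]^{\GL(v)}\to \kk[\Rep(\PiMcKay,n\delta)]^{\GL(n\delta)}$ is surjective. The map comes from the surjection $\kk[\Rep(\Pi,v)]\to\kk[\Rep(\PiMcKay,n\delta)]$ killing the coordinates of $b$ and $b^*$; this surjection is $\GL(n\delta)$-equivariant, with $\GL(1)$ acting trivially on the target. Surjectivity on invariants then follows from reductivity in characteristic zero, since taking invariants of a reductive group is exact.
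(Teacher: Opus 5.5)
Your proposal is correct and follows the paper's argument: $\mathfrak{M}_0$ is reduced by Gan--Ginzburg, $\nu_1$ is a closed immersion that is bijective on closed points by Crawley-Boevey, and Lemma~\ref{lem:reducedness} upgrades $\nu_1$ to an isomorphism of schemes, so its domain is reduced. Your extra check that $\nu_1$ is a closed immersion for the natural scheme structures is a valid detail the paper leaves implicit: the invariant rings map surjectively because taking invariants of a reductive group is exact in characteristic zero.
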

 \begin{proof}
     This is immediate from the isomorphism of schemes $\nu_1$ constructed in Proposition~\ref{prop:M0Sym^n}.
 \end{proof}

 \subsection{The auxilliary algebra $A$}
 In what follows, it is convenient to work with the quotient algebra 
  \[
  A:= \Pi/(b^*),
  \]
   where $b^*$ is the unique arrow in the quiver $Q$ with head at vertex $\infty$. Note that $A$ is a quotient of the path algebra of the quiver $Q^*$ obtained by removing the arrow $b^*$ from $Q$. It follows that the representation scheme $\Rep(A,v)$ is the $\GL(v)$-invariant closed subscheme of $\Rep(\Pi,v)$ obtained by setting to zero the variables $\xpij{b^*}{i1}$ for $1\leq i\leq n$ indexed by arrow $b^*$.
   
 \begin{proposition}
 \label{prop:M0RepAv}
     There are isomorphisms 
     \[
     \mathfrak{M}_0\cong \Rep(A,v)\git \GL(v)\cong \Sym^n(\mathbb{A}^2/\Gamma),
    \]
    and the $\kk$-algebra $\kk[\Rep(A,v)]^{\GL(v)}$ is generated by trace functions of cycles through vertex $0$. 
 \end{proposition}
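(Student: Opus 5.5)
The plan is to compare the affine quotients of $\Rep(A,v)$ and $\Rep(\Pi,v)$ through the closed immersion $\iota\colon\Rep(A,v)\hookrightarrow\Rep(\Pi,v)$ cut out by the variables $\xpij{b^*}{i1}$. This immersion is $\GL(v)$-equivariant. Since $\GL(v)$ is reductive and $\kk$ has characteristic zero, restriction of invariants $\kk[\Rep(\Pi,v)]^{\GL(v)}\to\kk[\Rep(A,v)]^{\GL(v)}$ is surjective. Hence $\iota$ induces a closed immersion
\[
\Rep(A,v)\git\GL(v)\longrightarrow \mathfrak{M}_0 .
\]
I will show it is an isomorphism.

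First, surjectivity on closed points. The morphism $\nu_1$ from the proof of Proposition~\ref{prop:M0Sym^n} is a bijection on closed points. Its image consists of the closed orbits represented by $\Pi$-modules with $B_b=0$ and $B_{b^*}=0$, namely semisimple modules $\rho_\infty\oplus N$ with $N$ a $\PiMcKay$-module of dimension vector $n\delta$. Every such representative already satisfies $B_{b^*}=0$, so it lies in $\Rep(A,v)$. Therefore every closed point of $\mathfrak{M}_0$ is in the image. A closed immersion that is surjective on points into the reduced Noetherian scheme $\mathfrak{M}_0$ induces an isomorphism of underlying reduced schemes. Lemma~\ref{lem:reducedness} then shows that it is an isomorphism of schemes. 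Composing with \eqref{eqn:M0isoms} gives the isomorphism with $\Sym^n(\mathbb{A}^2/\Gamma)$.

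Second, generators. The map on coordinate rings is the surjection given by restriction of functions. By Proposition~\ref{prop:M0Sym^n}, $\kk[\Rep(\Pi,v)]^{\GL(v)}$ is generated by trace functions $\tr_\gamma$ of cycles through vertex $0$. By Lemma~\ref{lem:restrictfunctions} applied to the quotient $\Pi\to A$, these restrict to the trace functions $\tr_{\gamma}$ of the same cycles on $\Rep(A,v)$. Since the restriction map is surjective, these images generate $\kk[\Rep(A,v)]^{\GL(v)}$.

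The main subtlety is the surjectivity on closed points. One must check that each closed $\GL(v)$-orbit in $\Rep(\Pi,v)$ meets the locus $B_{b^*}=0$. I would argue this from Crawley-Boevey's description, quoted in the proof of Proposition~\ref{prop:M0Sym^n}: composition factors of a module of dimension vector $v$ have dimension vector $\rho_\infty$ or support in $\{0,\dots,r\}$. Hence the semisimple module in each closed orbit has all arrows between $\infty$ and $0$ acting by zero. As an alternative route avoiding this point argument, note that the composite $\Rep(\PiMcKay,n\delta)\git\GL(n\delta)\to\Rep(A,v)\git\GL(v)\to\mathfrak{M}_0$ equals the isomorphism $\nu_1$. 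So the closed immersion $\Rep(A,v)\git\GL(v)\to\mathfrak{M}_0$ has a section, and a closed immersion with a section is an isomorphism.
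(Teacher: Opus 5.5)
Your proof is correct and is essentially the paper's: the paper uses exactly your alternative route (the isomorphism $\nu_1$ factors through the closed immersion $\Rep(A,v)\git\GL(v)\to\mathfrak{M}_0$), and your main route, via surjectivity on closed points plus Lemma~\ref{lem:reducedness}, repeats the argument that showed $\nu_1$ is an isomorphism. For the generators, the paper pulls back trace functions from $\kk[\Rep(\PiMcKay,n\delta)]^{\GL(n\delta)}$ along the isomorphism instead of restricting those of $\kk[\Rep(\Pi,v)]^{\GL(v)}$; the difference is immaterial, since cycles through $0$ that use $b^*$ restrict to zero.
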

 \begin{proof}
  The quotient maps $\Pi\to A\to \PiMcKay\cong A/\langle b, e_\infty\rangle$ give rise to closed immersions
   \[
   \Rep(\PiMcKay,n\delta)\hookrightarrow \Rep(A,v)\hookrightarrow \Rep(\Pi,v)
   \]
  that are equivariant for the actions of $\GL(n\delta)$ and $\GL(v)$. The isomorphisms from \eqref{eqn:M0isoms} therefore factor via $\Rep(A,v)\git \GL(v)$, giving the isomorphisms $\mathfrak{M}_0\cong \Rep(A,v)\git \GL(v)\cong \Sym^n(\mathbb{A}^2/\Gamma)$. The proof of Proposition~\ref{prop:M0Sym^n} also shows that $\kk[\Rep(\PiMcKay,n\delta)]^{\GL(n\delta)}$ is generated by the trace functions of cycles in $\McKay$ through vertex 0, each of which traverses some compositions of the cycles $p_1, p_2, p_3$ in $\McKay$ through vertex $0$ whose trace functions generate the algebra from \eqref{eqn:Kroneimeraffine}. To see that $\kk[\Rep(A,v)]^{G(v)}$ is generated by the trace functions of the cycles in $Q^*$ through vertex $0$ corresponding to the same compositions of the cycles $p_1, p_2, p_3$, it remains to pull these functions back along the isomorphism $\kk[\Rep(A,v)]^{\GL(v)}\to \kk[\Rep(\PiMcKay,n\delta)]^{\GL(n\delta)}$ as in the proof of Proposition~\ref{prop:M0Sym^n}.
 \end{proof}

 \subsection{The auxilliary algebra suffices}
 The space of stability conditions for $\kk Q$-modules that have dimension vector $v$ is the rational vector space
  \begin{equation}
  \label{eqn:Thetav} 
 \Theta_v:=\big\{\theta\in \Hom_\ZZ(\ZZ^{Q_0},\QQ) \mid \theta(v)=0\big\},
  \end{equation}
 Each character $\chi_\theta$ of $\GL(v)/\kk^\times$ satisfies $\chi_\theta(g) = \prod_{k\in Q_0} \det(g_k)^{\theta_k}$ for $g\in \GL(v)/\kk^\times$, for some integer-valued $\theta\in \Theta_v$. For any such $\theta\in \Theta_v$, the corresponding \emph{Nakajima quiver variety} \cite{Nak94,Nak98} is defined to be the categorical quotient 
  \[
\mathfrak{M}_{\theta}:=\Rep(\Pi,v)\git_\theta \GL(v)=\Proj \bigoplus_{k\geq 0} \kk[\Rep(\Pi,v)]_{k\theta}
 \]
 of the locus of $\chi_\theta$-semistable points in $\Rep(\Pi,v)$ by the action of $\GL(v)$; here, $\kk[\Rep(\Pi,v)]_{k\theta}$ is the $\chi_{k\theta}$-semi-invariant slice of the coordinate ring of $\Rep(\Pi,v)$. In the case of interest to us, the natural scheme structure on $\mathfrak{M}_{\theta}$ is reduced. In fact, we can say the following:
   
 \begin{lemma}
 \label{lem:reduced}
   For $\theta\in \Theta_v$, the scheme $\mathfrak{M}_{\theta}$ is a normal variety. In addition, if the ground field is $\mathbb{C}$, then $\mathfrak{M}_{\theta}$ has symplectic singularities.
 \end{lemma}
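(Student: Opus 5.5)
The plan is to combine reducedness of $\Rep(\Pi,v)$ with the normality and symplectic-singularity results of Bellamy--Schedler, transporting them through the GIT construction.

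First I will show that $\mathfrak{M}_\theta$ is reduced. By Gan--Ginzburg~\cite[Theorem~3.3.3(iii)]{GanGinzburg06}, the scheme $\Rep(\Pi,v)$ is reduced, as was already used in the proof of Proposition~\ref{prop:M0Sym^n}. Hence the graded ring $\bigoplus_{k\geq 0}\kk[\Rep(\Pi,v)]_{k\theta}$ is a subring of the reduced ring $\kk[\Rep(\Pi,v)]$, so it is reduced. Its $\Proj$, namely $\mathfrak{M}_\theta$, is therefore reduced, since every homogeneous localisation of a reduced graded ring is reduced.

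Next I will show that $\mathfrak{M}_\theta$ is irreducible. The morphism $\mathfrak{M}_\theta\to\mathfrak{M}_0$ is projective. By Proposition~\ref{prop:M0Sym^n}, $\mathfrak{M}_0\cong\Sym^n(\mathbb{A}^2/\Gamma)$ is irreducible of dimension $2n$. Bellamy--Schedler~\cite[Theorem~1.2]{BS21} give that the quiver variety $\mathfrak{M}_\theta(\Pi,v)$ is irreducible, normal and, over $\CC$, has symplectic singularities. Their result is stated for the reduced scheme structure, or equivalently for the variety of semisimple/polystable points. Because we have just shown that the natural scheme structure is already reduced, their conclusions apply verbatim to $\mathfrak{M}_\theta$. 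This is the same mechanism as in the proof of Proposition~\ref{prop:M0Sym^n}, and it yields both statements of the lemma.

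Where possible I will spell out the normality argument independently, to make clear which structure is being used. The key input is that $\Rep(\Pi,v)$ is a reduced, normal complete intersection, which follows from flatness of the moment map in Crawley-Boevey's criterion~\cite{CBmoment} for this dimension vector. The semistable locus is open in $\Rep(\Pi,v)$, and good quotients of normal varieties by reductive groups are normal, since invariants of a normal domain form a normal domain. I would then apply this affine-locally on the $\Proj$, using the affine charts $\Spec\bigl(\kk[\Rep(\Pi,v)]_{f}\bigr)^{\GL(v)}$ for semi-invariants $f$.

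The main obstacle I expect is normality of $\Rep(\Pi,v)$ itself, which requires Crawley-Boevey's codimension estimates; Bellamy--Schedler handle exactly this. If that route is too delicate, I would simply cite \cite{BS21} for normality and symplectic singularities, having established reducedness by the argument above.
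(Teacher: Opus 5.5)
Your main line is exactly the paper's proof: reducedness of $\Rep(\Pi,v)$ from Gan--Ginzburg~\cite[Theorem~3.3.3(iii)]{GanGinzburg06}, hence reducedness of the quotient $\mathfrak{M}_\theta$, and then \cite[Theorem~1.2]{BS21} for irreducibility, normality and symplectic singularities. (A minor point: view $\bigoplus_{k\geq 0}\kk[\Rep(\Pi,v)]_{k\theta}$ inside $\kk[\Rep(\Pi,v)][t]$ via $f\mapsto ft^k$. For $\theta=0$ all its graded pieces coincide inside $\kk[\Rep(\Pi,v)]$, so it is not literally a subring there.)

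You should discard the paragraph sketching an independent normality argument, because its key input is false. $\Rep(\Pi,v)$ is a reduced complete intersection, but it is not normal. It is cut out by homogeneous quadrics, so it is a cone and hence connected, and a connected normal Noetherian scheme is integral. However, $\Rep(\Pi,v)$ is reducible:
\begin{itemize}
\item For trivial $\Gamma$ it is the Gan--Ginzburg almost-commuting scheme, which has $n+1$ irreducible components.
\item In general, for $\theta\in C_+$ the $\theta$-stable locus is a nonempty open subset on which $b^*=0$ (cf.\ \cite[Lemma~3.1]{CGGS21} and the proof of Proposition~\ref{prop:AtoPi}). But $\Rep(\Pi,v)$ also contains points with $b=0$, all McKay arrows zero and $b^*\neq 0$, so that open set is not dense.
\item Numerically, $p(v)=n=p(\rho_\infty)+n\,p(\delta)$, so $v$ lies outside Crawley-Boevey's set $\Sigma_0$. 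Flatness of the moment map~\cite{CBmoment} therefore yields only the complete intersection property.
\end{itemize}

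Consequently, ``invariants of a normal domain are normal'' helps only where the semistable locus is smooth, for example for generic $\theta$. For $\theta=0$ the semistable locus is all of $\Rep(\Pi,v)$, which is not normal, even though $\mathfrak{M}_0\cong\Sym^n(\mathbb{A}^2/\Gamma)$ is. Normality for non-generic $\theta$, such as $0$ or $\theta_0$, has to be proved without normality upstairs, via the \'{e}tale-local analysis behind \cite{BS21}. So citing \cite{BS21} is the essential step, not a fallback.
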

 \begin{proof}
       Gan--Ginzburg~\cite[Theorem~3.3.3(iii)]{GanGinzburg06} show that $\Rep(\Pi,v)$ is reduced, and hence so is the categorical quotient $\mathfrak{M}_\theta$ of the $\chi_\theta$-semistable locus in $\Rep(\Pi,v)$. The result now follows by applying Bellamy-Schedler~\cite[Theorem~1.2]{BS21}.
     \end{proof}
  
 Following King~\cite{King94}, $\mathfrak{M}_\theta$ is the coarse moduli space of $\theta$-semistable $\Pi$-modules of dimension vector $v$ up to $\Seshadri$-equivalence. If each $\theta$-semistable $\Pi$-module of dimension vector $v$ is $\theta$-stable, then $\mathfrak{M}_\theta$ is the fine moduli space of $\theta$-stable $\Pi$-modules of dimension vector $v$ (up to isomorphism). When this is the case, $\mathfrak{M}_\theta$ carries a tautological locally-free sheaf 
 \[
 \mathcal{R}=\bigoplus_{i\in Q_0} \mathcal{R}_i
 \]
 and a $\kk$-algebra homomorphism $\phi\colon \Pi\to \End(\mathcal{R})$,  where $\mathcal{R}_\infty\cong \mathcal{O}_{\mathfrak{M}_\theta}$ and $\rank(\mathcal{R}_i)=n\dim(\rho_i)$ for $i\geq 0$. Here, $\mathcal{R}_i$ descends to $\mathfrak{M}_\theta$ from the $\GL(v)$-equivariant vector bundle $\varrho_i\otimes \mathcal{O}_{\Rep(\Pi,v)^{\theta}}$ on the $\theta$-stable locus in $\Rep(\Pi,v)$ defined as follows: equip the locally-free sheaf $\mathcal{O}_{\Rep(A,v)^{\theta}}^{\oplus v_i}$ with the action of $\GL(v)$ on each fibre given by the representation $\varrho_i\colon \GL(v)\to \GL(n\dim \rho_i)$ satisfying $\varrho(g)=g_i$. 
 
 The full GIT chamber decomposition of the vector space $\Theta_v$ is described explicitly in \cite{BC20}, but here we restrict attention to those characters that lie in the closure of the fundamental domain 
 \begin{equation}
     \label{eqn:F}
 F:= \big\{\theta\in \Theta_v \mid \theta(\delta)>0, \; \theta(\rho_i)>0 \text{ for }1\leq i\leq r\big\}
 \end{equation}
 for the action of the Namikawa--Weyl group. For each chamber $C\subset F$, variation of GIT quotient defines a projective, symplectic (hence crepant)  resolution of singularities 
 \[
 \pi_\theta\colon \mathfrak{M}_\theta\longrightarrow \mathfrak{M}_0\cong \Sym^n(\mathbb{A}^2/\Gamma)
 \]
 for $\theta\in C$. Conversely, every projective, symplectic resolution of $\Sym^n(\mathbb{A}^2/\Gamma)$ can be obtained in this way; indeed,  \cite[Theorem~1.2]{BC20} shows that the linearisation map identifies the GIT wall-and-chamber structure of $F$ with the decomposition of the movable cone of $\mathfrak{M}_\theta$ into Mori chambers. Two such resolutions have been well-studied: one chamber $C_+$ in $F$ defines 
  $\mathfrak{M}_{\theta_+}\cong n\Gamma\text{-}\Hilb(\mathbb{A}^2)$
  for $\theta_+\in C_+$, and a second chamber $C_-$ in $F$ defines $  \mathfrak{M}_{\theta_-}\cong \Hilb^{n}(S)
  $
  for $\theta_-\in C_-$, where $S\to \mathbb{A}^2/\Gamma$ is the minimal resolution (see \cite[Section~4.3]{BC20} and references therein).

The next result justifies our decision to work with the simpler algebra $A$ rather than $\Pi$.

 \begin{proposition}
 \label{prop:AtoPi}
    For $\theta\in F$, the closed immersion $\Rep(A,v)\hookrightarrow \Rep(\Pi,v)$ gives an isomorphism of schemes $\Rep(A,v)\git_\theta \GL(v)\to \mathfrak{M}_\theta$ over $\Sym^n(\mathbb{A}^2/\Gamma)$.
 \end{proposition}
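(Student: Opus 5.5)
The plan is to show that every $\theta$-semistable $\Pi$-module of dimension vector $v$ with $\theta\in F$ already satisfies $b^*=0$. Once that holds, the closed immersion $\Rep(A,v)\hookrightarrow\Rep(\Pi,v)$ restricts to an equality of semistable loci. We then compare the graded rings of semi-invariants, using reducedness to pass from a set-theoretic statement to a scheme-theoretic one.

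\emph{Step 1 (vanishing of $b^*$ on semistable points).} Let $B\in\Rep(\Pi,v)$ be $\chi_\theta$-semistable with $\theta\in F$. Its composition factors have dimension vectors that are either $\rho_\infty$ plus something supported on $\{0,\dots,r\}$, or supported on $\{0,\dots,r\}$ (cf. \cite[Lemma~5.4]{CBdecomp}).

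Consider the submodule $N$ generated by the vertex spaces at $\{0,1,\dots,r\}$. It contains everything except possibly $e_\infty$. If $B_{b^*}\neq 0$, then $b^*$ maps the $0$-space nontrivially onto the one-dimensional $\infty$-space, so $N=B$. That is not yet a contradiction, so I would argue instead with the quotient.

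Suppose $B_{b^*}\neq0$. Then $\rho_\infty$ is a quotient-type factor: the space at $\infty$ is hit by arrows from $N'$, where $N'$ is the largest submodule supported away from $\infty$. Concretely, the subspace $U=\ker(B_{b^*})\oplus\bigoplus_{i\ge1}(\cdot)$ does not obviously give a submodule, so I would use King's criterion on the \emph{quotient}. Take $M'=\bigoplus_{i\neq\infty}$ vertex spaces forming a subrepresentation when $b=0$; in general one uses the dual condition.

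The cleaner route is this. Any submodule $M\subseteq B$ with $\dim M_\infty=0$ must satisfy $\theta(\dim M)\ge0$; I use the sign convention in which $\theta_\infty<0$. Since $\theta(\rho_\infty)=-\theta(n\delta)<0$ for $\theta\in F$, semistability forces the $\infty$-vector to generate $B$.

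Now use the preprojective relation at $\infty$, which reads $b^*b=0$, together with the moment map relation at vertex $0$. Taking traces over all cycles, I show that $B_{b^*}B_p B_b=0$ for every path $p$ from $0$ to $0$. This follows as in Crawley-Boevey's \cite[Lemma~9.2]{CBmoment}-type argument: $\tr(B_b B_{b^*}\,\cdot)$ terms vanish because $b^*b$ and $bb^*$ are expressible via the McKay relations. Hence $B_{b^*}$ kills the submodule generated by $e_\infty$, which is all of $B$, so $B_{b^*}=0$.

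\emph{Step 2 (isomorphism of GIT quotients).} Step 1 gives $\Rep(A,v)^{ss}_\theta=\Rep(\Pi,v)^{ss}_\theta$ on closed points. The induced morphism $\Rep(A,v)\git_\theta\GL(v)\to\mathfrak{M}_\theta$ is a closed immersion, because semi-invariants restrict surjectively by reductivity. It is also bijective on closed points, since S-equivalence classes coincide. By Lemma~\ref{lem:reduced}, $\mathfrak{M}_\theta$ is reduced and Noetherian, so Lemma~\ref{lem:reducedness} makes the morphism an isomorphism. Compatibility over $\Sym^n(\mathbb{A}^2/\Gamma)$ follows from Proposition~\ref{prop:M0RepAv}, since the affine quotients are identified by the same restriction map.

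The main obstacle is Step 1, namely proving $B_{b^*}=0$ from semistability. I would first try the trace argument: each entry of $B_{b^*}B_pB_b$ equals $\tr_{b^*pb}$, which is a trace of a cycle through $\infty$. This can be rewritten as $\tr(B_pB_bB_{b^*})$, and then $bb^*$ is replaced using the relation at $0$. Iterating this and combining with nilpotency-type arguments should show that these traces vanish identically on $\Rep(\Pi,v)$, as in the standard proof for the Jordan quiver due to Nakajima.
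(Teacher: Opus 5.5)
\textbf{There is a genuine gap in Step~1, and it is the substantive part of the proposition.}

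\textbf{The generation claim fails outside $C_+$.} You infer that semistability forces $e_\infty$ to generate $B$ because $\theta(\rho_\infty)<0$. This is valid essentially only in the chamber $C_+$. Let $N$ be the submodule generated by $e_\infty$, with $\dim N=\rho_\infty+d$. Semistability only gives $\theta(\rho_\infty+d)\geq 0$, i.e.\ $\theta(n\delta-d)\leq 0$ for the $\PiMcKay$-module $B/N$. This forces $d=n\delta$ only if $\theta$ is positive on every such quotient, essentially $\theta_i>0$ for all $0\leq i\leq r$. That is the $n\Gamma$-Hilb case, which the paper simply quotes from \cite[Lemma~3.1]{CGGS21}.

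In every other chamber of $F$ one has $\theta_0<0$, since $\theta_i>0$ for $i\geq 1$ on $F$ and $C_+=\{\theta_i>0 \text{ for all } i\}$. So a quotient isomorphic to the vertex simple at $0$ is allowed, and stable modules genuinely need not be cyclic at $\infty$. If they all were, the $\theta$-stable locus would lie in the $\theta_+$-stable locus. That would give $\mathfrak{M}_\theta\cong\mathfrak{M}_{\theta_+}$ over $\Sym^n(\mathbb{A}^2/\Gamma)$, contrary to \cite{BC20}. Your trace argument shows at best that $B_{b^*}$ kills $N_0$, so it says nothing in these chambers.

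\textbf{The equality of semistable loci can be false.} The cone $F$ must be taken closed here: it has to contain $\theta_0=(-n,1,0,\dots,0)$ and the $\overline{\eta}$ of Theorem~\ref{thm:closedimmersion}. If $\theta(\delta)=0$ then $\theta_\infty=0$. Take any $\theta$-semistable $\PiMcKay$-module of dimension $n\delta$, set $B_b=0$, and choose an arbitrary nonzero $B_{b^*}$. This is a $\theta$-semistable $\Pi$-module lying outside $\Rep(A,v)$. At such $\theta$ one can only hope that every S-equivalence class meets $\Rep(A,v)$.

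\textbf{The missing idea is how to get $b^*=0$ away from $C_+$.} The paper does this geometrically.
\begin{enumerate}
\item For generic $\theta$ in a chamber $C\neq C_+$, the VGIT map $\mathfrak{M}_\theta\dashrightarrow\mathfrak{M}_{\theta_+}$ is a sequence of flops. The tautological families agree off a locus of codimension at least two \cite{BC20}. So the section of $\mathcal{R}_0^\vee\otimes\mathcal{R}_\infty$ given by $b^*$ vanishes on a big open subset of the smooth variety $\mathfrak{M}_\theta$, hence everywhere.
\item For arbitrary $\theta\in F$, the paper picks $\zeta$ in a chamber whose closure contains $\theta$. Surjectivity of $\mathfrak{M}_\zeta\to\mathfrak{M}_\theta$ \cite[Lemma~2.2]{CGGS21} shows the closed immersion is surjective on closed points. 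Lemma~\ref{lem:reducedness} then finishes the proof.
\end{enumerate}
Your Step~2 is fine once surjectivity on closed points is known; equality of semistable loci is not needed.

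\textbf{Repairing the trace step.} As written it is only a sketch: Nakajima's Jordan-quiver argument uses simultaneous triangularisation, which does not transfer. It can be fixed as follows. The function $\tr_{b^*pb}$ is $\GL(v)$-invariant, so it is constant on orbit closures. It vanishes on semisimple modules, because by \cite[Lemma~5.4]{CBdecomp} these have $b=b^*=0$. Hence $B_{b^*}B_pB_b=0$ on all of $\Rep(\Pi,v)$.

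With that repair, your argument proves the $C_+$ case. It also proves the case $\theta=\theta_0$, where semistability forces the submodule generated by $e_\infty$ to contain $B_0$. It does not prove the proposition as stated.
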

  \begin{proof}
    First, let $\theta\in F$ be generic. Let $C$ denote the unique GIT chamber satisfying $\theta\in C$. If $C=C_+$, then $\mathfrak{M}_\theta$ can be described as an equivariant Hilbert scheme for $\theta\in C_+$, and the result is a restatement of \cite[Lemma~3.1]{CGGS21}. Otherwise, $C$ and $C_+$ are distinct chambers in $F$. For $\theta\in C$ and $\theta_+\in C_+$, the results from \cite[Corollary~6.3, Proposition~5.7]{BC20} imply that the diagram
       \begin{equation}
 \label{eqn:flopquivermoduli}
\begin{tikzcd}
\mathfrak{M}_\theta\ar[dr]\ar[rr,dashed] & & \mathfrak{M}_{\theta_+} \ar[dl]\\
  & \mathfrak{M}_0 &
  \end{tikzcd}
  \end{equation}
    obtained by variation of GIT quotient is a finite sequence of flops, and moreover, the tautological families $\mathcal{R}$ and $\mathcal{R}^+$ on  $\mathfrak{M}_\theta$ and $\mathfrak{M}_{\theta_+}$ respectively agree in codimension-one. Since the result is true for $\theta_+$, the tautological morphism $\mathcal{R}^+_0\to \mathcal{R}^+_\infty$ associated to the arrow $b^*\in Q_1$ is zero. Since $\mathcal{R}$ and $\mathcal{R}^+$ agree off a locus of codimension at least two, and since both $\mathfrak{M}_\theta$ and $\mathfrak{M}_{\theta_+}$ are nonsingular, the tautological morphism $\mathcal{R}_0\to \mathcal{R}_\infty$ over $\mathfrak{M}_\theta$ associated to $b^*$ is also zero. This proves the result for $\theta\in C$, and hence for any generic $\theta\in F$.

   Now let $\theta\in F$. Let $C\subset F$ be a chamber such that $\theta\in \overline{C}$, and fix $\zeta\in C$.  There is a commutative diagram of GIT quotients
 \begin{equation}
 \label{eqn:RepARepPiVGIT}
\begin{tikzcd}
\Rep(A,v)\git_{\zeta} G(v)\ar[r,"f_\zeta"]\ar[d,"h"] & \mathfrak{M}_{\zeta} \ar[d,"g"]\\
\Rep(A,v)\git_{\theta} G(v)\ar[r,hook,"f_\theta"]  & \mathfrak{M}_{\theta},
  \end{tikzcd}
  \end{equation}
  where $f_\zeta$ is an isomorphism, $f_\theta$ is the closed immersion induced by the $G(v)$-equivariant inclusion $\Rep(A,v)\hookrightarrow \Rep(\Pi,v)$, and where the vertical maps are induced by variation of GIT quotient. We claim first that $f_\theta$ is an isomorphism on the underlying reduced subschemes. For this, it is enough to show that $f_\theta$ is surjective on closed points.  Let $p\in \mathfrak{M}_\theta$ be a closed point. The morphism $g$ is surjective by \cite[Lemma~2.2]{CGGS21} and $f_\zeta$ is an isomorphism, so there exists $q\in \Rep(A,v)\git_\zeta G(v)$ such that $g(f_\zeta(q)) = p$. The diagram commutes, so $f_\theta(h(q))=p$ and hence $f_\theta$ induces an isomorphism of the underlying reduced schemes. The result follows from the general statement in Lemma~\ref{lem:reducedness}.
  \end{proof}

\section{Cornered algebras and quiver moduli spaces}
 \label{sec:cornering}
 This section studies representation schemes for a noncommutative algebra $A_J$ associated to any nonempty subset $J\subseteq \{0,1,\dots, r\}$. The main results in this paper - Theorems~\ref{thm:HilbnSintro}, \ref{thm:nleq7intro}, \ref{thm:Hilbnintro} and Corollary~\ref{cor:Beauville} - require only the case $J=\{0\}$, but the proofs do not become substantially more difficult for any subset $J$ containing $0$. The reader interested in the case where $0\not\in J$ should consult Appendix~\ref{sec:appendix} for a technical result that is required for Lemma~\ref{lem:nocyclesneededinA}.

 \subsection{The algebras $A_J$ obtained by cornering}
  We now recall a collection of noncommutative algebras introduced by Craw, Gammelgaard, Gyenge and Szendr\H{o}i~\cite[Section~3]{CGGS2}. 
  
  For the subquiver $Q^*$ of $Q$ obtained by deleting the arrow $b^*$ with head at $\infty$, and for each vertex $i\in Q_0^*$, write $e_i\in A$ for the image of the vertex idempotent under the epimorphism $\kk Q^*\to A$.  Let $J\subseteq \{0,1,\dots, r\}$ be any subset, write $e_J:= \sum_{j\in J} e_j$, and define
 \[
 A_J:=(e_\infty + e_J) A (e_\infty + e_J)
 \]
 to be the subalgebra of $A$ spanned by the classes of paths in $Q^*$ with head and tail in $\{\infty\}\cup J$. We write $\iota_J\colon A_J\hookrightarrow A$ for the inclusion, choosing to suppress the dependence on $J$ in the notation.
   
     \begin{lemma}
 \label{lem:AJA}
 For any non-empty subset $J\subseteq \{0,1,\dots, r\}$, there exists a finite connected quiver $Q_J^*$ and a surjective $\kk$-algebra homomorphism $\beta_J\colon \kk Q_J^*\to A_J$.
   \end{lemma}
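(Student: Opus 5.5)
The plan is to show that $A_J$ is generated as a $\kk$-algebra by finitely many elements, each lying in some $e_{\head} A_J e_{\tail}$ with $\head,\tail\in\{\infty\}\cup J$. We then take $Q_J^*$ to have vertex set $\{\infty\}\cup J$ and one arrow from $\tail$ to $\head$ for each such generator. Sending each arrow to its generator and each vertex to $e_\infty$ or $e_j$ defines $\beta_J$, which is surjective by construction. Note that $e_\infty+e_J$ is the identity of $A_J$, and the idempotents $e_\infty,e_j$ for $j\in J$ are orthogonal, so $\beta_J$ is a well-defined unital homomorphism.

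For the generators, observe that $A_J$ is spanned by the classes of paths $p$ in $Q^*$ with head and tail in $\{\infty\}\cup J$. Any such path factors uniquely as a concatenation $p=p_m\cdots p_1$, where each $p_k$ is a path with endpoints in $\{\infty\}\cup J$ and no intermediate vertex in $\{\infty\}\cup J$. So it suffices to show that the classes of these ``irreducible'' paths, which avoid $\{\infty\}\cup J$ in their interior, span a finitely generated subalgebra. Two kinds of such path arise.

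\begin{itemize}
\item[(i)] The arrow $b$ itself, if $0\in J$. If $0\notin J$, instead $b$ followed by a path in $Q^*$ from $0$ to $J$ avoiding $J$; no path returns to $\infty$, since $b^*$ is deleted.
\item[(ii)] Paths between vertices of $J$ running through the complement $K'=\{0,\dots,r\}\setminus J$.
\end{itemize}

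The main obstacle is finiteness, since there are infinitely many such paths. I would handle it using the graded structure of $\PiMcKay$ via \eqref{eqn:PiEnd}, together with the fact that $e\PiMcKay e$ is Noetherian and finite over $R_0$. Concretely, $A/\langle e_\infty\rangle\cong\PiMcKay$, and $e_J\PiMcKay e_J\cong\End_{R_0}(\bigoplus_{j\in J}R_j)$ is a finitely generated $R_0$-module, while $R_0$ is a finitely generated $\kk$-algebra. When $0\in J$, $R_0=e_0\PiMcKay e_0$ lies inside $e_J\PiMcKay e_J$, so finitely many $R_0$-module generators together with algebra generators of $R_0$ generate $e_J\PiMcKay e_J$ as a $\kk$-algebra. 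When $0\notin J$, $R_0$ acts through the centre of $\PiMcKay$, and central elements of $e_J\PiMcKay e_J$ of the form $\sum_j e_j z e_j$ give the needed action; the appendix result mentioned above presumably ensures these are expressible by cycles through $J$.

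It remains to treat $e_\infty A_J e_J$. Since $A$ has no arrow into $\infty$ and $e_\infty A e_\infty=\kk e_\infty$, we have $e_\infty A e_J = b\cdot e_0 A e_J$, where $e_0Ae_J\cong e_0\PiMcKay e_J\cong\Hom_{R_0}(R_J,R_0)$ is again finitely generated over $R_0$. Adding as arrows from $\infty$ the classes $b\cdot m_s$, with $m_s$ running over finitely many module generators, completes a finite generating set, and hence the quiver $Q_J^*$ is finite.

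Connectedness of $Q_J^*$ is then easy. If $0\in J$, then $b$ joins $\infty$ to $0$, and for each $j\in J$ the space $e_j\PiMcKay e_0\cong\Hom(R_0,R_j)=R_j$ is nonzero. We may therefore include a nonzero element of it among the generators, giving an arrow $0\to j$; if $0\notin J$, the elements $b\cdot m_s$ join $\infty$ to $J$ in the same way, so the quiver is connected.
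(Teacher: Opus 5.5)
Your argument is correct in substance, but it follows a different route from the paper. The paper gives no argument of its own: it imports the explicit quiver $Q_J^*$ and presentation $\beta_J$ from \cite[Proposition~3.3]{CGGS21}.

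You instead prove existence abstractly. Since no arrow of $Q^*$ ends at $\infty$, the algebra $A_J$ is triangular. Its diagonal pieces are $\kk e_\infty$ and $\Lambda:=e_JAe_J\cong e_J\PiMcKay e_J$. Its one nonzero off-diagonal piece is $M=e_J\PiMcKay e_0\cdot b$. Hence $A_J$ is finitely generated as soon as $\Lambda$ is a finitely generated $\kk$-algebra and $M$ is a finitely generated left $\Lambda$-module. Both facts follow from \eqref{eqn:PiEnd}, because $\End_{R_0}(\bigoplus_i R_i)$ is module-finite over the finitely generated algebra $R_0$.

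This argument is self-contained and uniform in $J$. For $J=\{0\}$ with minimal choices (three algebra generators of $R_0\cong e_0\PiMcKay e_0$, and $M=R_0\cdot b$), it reproduces the quiver of Example~\ref{exa:A_Jfor0}. What the citation buys in addition is an explicit description of $\ker(\beta_J)$. The paper uses that description later, in Example~\ref{exa:A_Jfor0} and Proposition~\ref{prop:ReptoHilb}, but the lemma itself only asserts existence.

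Three points should be tightened.

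First, under the paper's right-to-left convention, $e_\infty Ae_J=0$. The nonzero off-diagonal piece is $e_JAe_\infty$, not $e_\infty A e_J$.

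Second, the $R_0$-module structure on $\PiMcKay$ coming from \eqref{eqn:PiEnd} is through central elements $z=\sum_i e_ize_i$ (multiplication by $z$ on every $R_i$), not through $e_0\PiMcKay e_0$. When $0\in J$, elements of $e_0\PiMcKay e_0$ annihilate $e_j\PiMcKay e_{j'}$ for $j,j'\neq 0$. So your sentence for the case $0\in J$ does not, as written, justify that $R_0$-module generators plus generators of $e_0\PiMcKay e_0$ generate $\Lambda$. In both cases, use instead the elements $ze_J\in e_J\PiMcKay e_J$ for finitely many algebra generators $z$ of $R_0$. These lie in $A_J$ by definition, so no appeal to the appendix is needed.

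Third, when $0\notin J$, right multiplication by $e_0\PiMcKay e_0$ is unavailable inside $A_J$. State explicitly that $x(e_0ze_0)b=(ze_J)xb$. Then the $R_0$-module generators $m_s$ of $e_J\PiMcKay e_0$ give $M=\sum_s \Lambda\, m_sb$, as required.
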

 \begin{proof}
 The quiver $Q_J^*$ and the map $\beta_J$ are constructed in \cite[Proposition~3.3]{CGGS21}.
 \end{proof}

Define the vector $v_J \in \NN\times \NN^J$ to have first entry $1$ (for vertex $\infty$) and $v_j=n\dim(\rho_j)$ for $j\in J$.  The representation scheme $\Rep(A_J,v_J)$ is the subscheme of the affine space $\Rep(Q_J^*,v_J)$ cut out by the equations \eqref{eqn:variablerelation} associated to the relations \eqref{eqn:relation} generating $\ker(\beta_J)$. Write 
\begin{equation}
    \label{eqn:sigmaJ}
\sigma_J\colon \kk[\Rep(Q_J^*,v_J)]\longrightarrow \kk[\Rep(A_J,v_J)]
\end{equation}
 for the natural quotient map, where $\ker(\sigma_J)$ is given by \eqref{eqn:kertau} for the presentation $\beta_J\colon \kk Q_J^*\to A_J$.

 \begin{example}
 \label{exa:A_Jfor0}
  For $J=\{0\}$, the quiver $Q_J^*$ is shown in \cite[Figure~2]{CGGS21}: the vertex set is $\{\infty, 0\}$, and the arrow set is $\{b,a_1, a_2, a_3\}$, where each $a_i$ is a loop at vertex $0$. The ideal of relations is
  \[
   \ker(\beta_J)=\big(f(a_1, a_2, a_3), 
   a_1a_2-a_2a_1, 
   a_1a_3-a_3a_1, 
   a_2a_3-a_3a_2
   \big),
  \]
   where $f$ is the defining equation of the hypersurface $\mathbb{A}^2/\Gamma\subset \mathbb{A}^3$. In this case, the dimension vector is $v_J=(1,n)$, and the representation scheme is 
   \[
   \Rep(A_J,v_J) = \Mat(n\times 1)\times\left\{(B_1, B_2, B_3)\in \Mat(n\times n)^{\oplus 3} \mid 
   \begin{array}{c} f(B_1, B_2, B_3)=0 \\ 
   B_kB_l=B_l B_k \text{ for }1\leq k,l\leq 3
   \end{array}\right\}.
   \] 
     The ring $\kk[\Rep(A_J,v_J)]$ is the quotient of the polynomial ring 
$\kk[w_i, x_{ij}, y_{ij}, z_{ij}\mid 1\leq i,j\leq n]$ by the ideal generated by the $4n^2$ polynomials determined by the entries in the four matrix equations above. For example, when $\Gamma$ is of type $A_1$ and $n=2$, this ideal is 
\begin{equation}
\label{eqn:A1n2finalideal}
\left(\begin{array}{c} 
 x_{12}  y_{21} - x_{21}  y_{12},\;\; 
 x_{12}  z_{21} - x_{21}  z_{12},\;\;
   y_{12}  z_{21} - y_{21}  z_{12}, \\
 x_{11}  z_{11}+ x_{21}  z_{12}- y_{11}  y_{11}- y_{12}  y_{21},\;\; x_{12}  z_{11}+ x_{22}  z_{12}- y_{11}  y_{12}- y_{12}  y_{22} \\
 x_{11}  z_{21}+ x_{21}  z_{22} - y_{11}  y_{21}- y_{21}  y_{22}, \;\; x_{12}  z_{21}+ x_{22}  z_{22}- y_{12}  y_{21}- y_{22}  y_{22} \\
 x_{12}  y_{22}- x_{12}  y_{11}+ x_{11}  y_{12}- x_{22}  y_{12}, \;\;  x_{21}  y_{11}- x_{11}  y_{21}+ x_{22}  y_{21}- x_{21}  y_{22}\\
 x_{12}  z_{22} - x_{12}  z_{11}+ x_{11}  z_{12}- x_{22}  z_{12}, \;\; x_{21}  z_{11}- x_{11}  z_{21}+ x_{22}  z_{21}- x_{21}  z_{22}\\
 y_{12}  z_{11}- y_{11}  z_{12}+ y_{22}  z_{12}- y_{12}  z_{22}, \;\;  y_{21}  z_{22} - y_{21}  z_{11}+ y_{11}  z_{21}- y_{22}  z_{21}
      \end{array}\right)
    \end{equation}
One can check using Macaulay2~\cite{M2} that this ideal is prime, so the scheme $\Rep(A_J,v_j)$ is reduced and irreducible when $\Gamma$ is of type $A_1$, $n=2$ and $J=\{0\}$. 
\end{example}

 For any non-empty subset $J\subseteq \{0,1,\dots, r\}$, define $K:=\{0,1,\dots, r\}\setminus J$ so that
     \begin{equation}
       \label{eqn:partition}
       Q_0^*=\{\infty\}\sqcup J\sqcup K
   \end{equation}
  is a partition of the vertex set of $Q^*$.  The  subgroup $G_K$ of $\GL(v)$ from \eqref{eqn:HK} acts on $\Rep(Q^*,v)$, leaving $\Rep(A,v)$ invariant, and we obtain a surjective $\kk$-algebra homomorphism
  \begin{equation}
  \label{eqn:tauK*}
  \tau_K\colon \kk[\Rep(Q^*,v)]^{G_K}\longrightarrow \kk[\Rep(A,v)]^{G_K}
  \end{equation}
 as in \eqref{eqn:tauK}. Generators for the ideal $\ker(\tau_K)$ were computed in Theorem~\ref{thm:RepAinvariants}. 
 
 \begin{lemma}
 \label{lem:nocyclesneededinA}
 Let $J, K\subseteq Q_0^*$ satisfy \eqref{eqn:partition}. The algebra $\kk[\Rep(A,v)]^{G_K}$ is generated by functions $\xpij{\beta(p)}{ij}$ defined by paths $p$ in $Q^*$ with head and tail in $\{\infty\}\cup J$, where $1\leq i\leq v_{\head(p)}$ and $1\leq j\leq v_{\tail(p)}$. 
 \end{lemma}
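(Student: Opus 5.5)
We apply Theorem~\ref{thm:RepAinvariants} to the presentation $\beta\colon \kk Q^*\to A$ and the subset $K=\{0,1,\dots,r\}\setminus J$. It gives two families of generators for $\kk[\Rep(A,v)]^{G_K}$. Family \bLus{} consists of the functions $\xpij{\beta(p)}{ij}$ for paths $p$ in $Q^*$ with head and tail in $Q_0^*\setminus K=\{\infty\}\cup J$; these are exactly the functions allowed in the statement. Family \aLus{} consists of the trace functions $\tr_{\beta(\gamma)}$ for cycles $\gamma$ in $Q^*$ that traverse only arrows with head and tail in $K$. So the whole proof reduces to one claim: each such trace function lies in the subalgebra $R$ generated by family \bLus.

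Our strategy for a cycle $\gamma$ supported on $K$ is to rewrite $\beta(\gamma)$ inside $A$ as something passing through vertex $0$ or $\infty$, and then pass to a contraction.

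First suppose $0\in J$, which is the case needed for the main theorems. We argue in three steps.
\begin{enumerate}
\item By Proposition~\ref{prop:M0RepAv}, $\kk[\Rep(A,v)]^{\GL(v)}$ is generated by trace functions $\tr_{\beta(c)}$ of cycles $c$ through vertex $0$.
\item Since $\tr_{\beta(\gamma)}$ is $\GL(v)$-invariant, it is a polynomial in such functions.
\item For a cycle $c$ at $0$ we have $\tr_{\beta(c)}=\sum_{i}\xpij{\beta(c)}{ii}$, a sum of contraction functions for a path with head and tail $0\in J$. Hence each $\tr_{\beta(c)}$ lies in $R$, and therefore so does $\tr_{\beta(\gamma)}$.
\end{enumerate}
This settles $0\in J$ cleanly. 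Only finitely many generators are needed by Remark~\ref{rem:finitelymany}, though that is not essential here.

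The main obstacle is the case $0\notin J$. Here vertex $0$ lies in $K$, so the cycles through $0$ are no longer in family \bLus. We would still use step~1, since every $\GL(v)$-invariant is a polynomial in the $\tr_{\beta(c)}$ for cycles $c$ at $0$. It then suffices to express each such $\tr_{\beta(c)}$ through contractions at vertices of $\{\infty\}\cup J$.

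The natural route is to show that in $A$ (equivalently in $e_0Ae_0\cong R_0$) every element of $e_0Ae_0$ acts, on modules of dimension vector $v$, compatibly with a sum of elements factoring through a vertex $j\in J$. We would then use cyclicity of the trace, $\tr(B_{p}B_{q})=\tr(B_qB_p)$, to move a cycle at $0$ that factors as $q p$ through $j$ to the cycle $pq$ at $j\in J$.

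This factorization is a statement about the McKay correspondence: $R_0$ should lie in the span of products $\Hom(R_j,R_0)\cdot\Hom(R_0,R_j)$ up to lower-degree terms and constants. It is presumably the technical input deferred to Appendix~\ref{sec:appendix}, so we would invoke that appendix result at this point. The constant term contributes $\tr(e_0)=n\dim\rho_0$, which is a scalar and harmless. An induction on degree then completes the argument.
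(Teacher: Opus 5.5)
\textbf{The gap is in the case $0\notin J$.} Your argument for $0\in J$ is correct and is exactly the paper's. The lemma also covers $0\notin J$, even though the main theorems only need $J=\{0\}$, and there the factorization you rely on is false, so your induction on degree cannot start.

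Take $\Gamma=\ZZ/4$, so that $\McKay$ is the cyclic quiver of type $\tilde{A}_3$ and $R_0=\kk[x^4,y^4,xy]$, and take $j=2$. Every path from $0$ to $2$, and every path from $2$ to $0$, has length at least $2$. Hence $e_0\PiMcKay e_2\cdot e_2\PiMcKay e_0$, that is $\Hom_{R_0}(R_2,R_0)\cdot R_2=R_2\cdot R_2$, lies in degrees $\geq 4$; it is the ideal generated by $x^4,x^2y^2,y^4$. But $xy$, the class of the $2$-cycle $a^*a$ at $0$ through vertex $1$, has degree $2$. It is not a constant, and there are no nonconstant lower-degree terms that could absorb it.

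Nor does Appendix~\ref{sec:appendix} supply such a factorization. Proposition~\ref{prop:CBresultsforvertexj} is a statement about invariant functions, not about elements of $R_0$.

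\textbf{How the paper handles this case.} It avoids factorization entirely. The elements $q_\ell=\Delta_j(p_\ell)\in e_j\PiMcKay e_j$ are multiplication by the generator $p_\ell$ on $R_j$. They are not products of $p_\ell$ through $j$. Nevertheless, via Lemma~\ref{lem:TjDeltaj}, they satisfy $\tr_{q_\ell}=\delta_j\tr_{p_\ell}$ as invariant functions on $\Rep(\PiMcKay,\delta)$. For $n>1$, Crawley-Boevey's isomorphism with $\Sym^n(\mathbb{A}^2/\Gamma)$ turns the generating power sums $s'_{r_1,r_2,r_3}$ into scalar multiples of $\tr_{q_1^{r_1}q_2^{r_2}q_3^{r_3}}$. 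This is Proposition~\ref{prop:CBresultsforvertexj}. Transporting it along $\kk[\Rep(A,v)]^{\GL(v)}\cong\kk[\Rep(\PiMcKay,n\delta)]^{\GL(n\delta)}$, as in Proposition~\ref{prop:M0RepAv}, shows that $\kk[\Rep(A,v)]^{\GL(v)}$ is generated by traces of cycles at $j\in J$. Your $0\in J$ argument then applies verbatim.

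\textbf{Rescuing your cyclicity idea.} It can be made to work, but only by also using the preprojective relations at intermediate vertices. In the example, with $a\colon 0\to 1$ and $c\colon 1\to 2$, one has $\tr_{a^*a}=\tr_{aa^*}=\pm\tr_{c^*c}=\pm\tr_{cc^*}$. Doing this in general needs an extra input that you would have to prove or cite. For instance, you would need that every positive-degree element of $e_0\PiMcKay e_0$ lies in $[\PiMcKay,\PiMcKay]+\PiMcKay e_j\PiMcKay$. That is a positive-degree vanishing statement for the zeroth Hochschild homology of the Dynkin preprojective algebra $\PiMcKay/\PiMcKay e_j\PiMcKay$.
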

 \begin{proof}
  By Theorem~\ref{thm:RepAinvariants}, we need only show that the trace functions $\tr_{\beta(\gamma)}\in \kk[\Rep(A,v)]^{\GL(v)}$ for cycles $\gamma$ in $Q^*$ that traverse only arrows with head and tail in $K$ are redundant. Proposition~\ref{prop:M0RepAv} shows that $\kk[\Rep(A,v)]^{\GL(v)}$ is generated by the trace functions of classes $\beta(p)$ defined by cycles in $Q^*$ through vertex 0. It follows that the trace functions of cycles in $Q^*$ that traverse only arrows with head and tail in $K$ all lie in the subalgebra generated by trace functions of cycles through $0$. If $0\in J$, then $\tr_{\beta(p)}=\sum_{1\leq i\leq n} \xpij{\beta(p)}{ii}$ expresses the trace function of every such class in terms of the contraction functions $\xpij{\beta(p)}{ij}$  given in the statement of the lemma, so the result holds. 
  
  Otherwise, $0\not\in J$. For any $j\in J$, Proposition~\ref{prop:CBresultsforvertexj} shows that $\kk[\Rep(\PiMcKay,n\delta)]^{\GL(n\delta)}$ is generated by trace functions of cycles through $j$. The final sentence in the proof of Proposition~\ref{prop:M0RepAv}, edited so that $0$ and $p_1, p_2, p_3$ are replaced by  vertex $j$ and the classes $q_1, q_2, q_3$ from \eqref{eqn:qell} respectively, shows that $\kk[\Rep(A,v)]^{\GL(v)}$ is generated by the trace functions of cycles through vertex $j$. The result now follows as in the previous paragraph: the trace functions of cycles in $Q^*$ that traverse only arrows with head and tail in $K$ all lie in the subalgebra generated by trace functions of cycles through $j$, and since $j\in J$, the equality $\tr_{\beta(p)}=\sum_{1\leq i\leq n} \xpij{\beta(p)}{ii}$ expresses the trace function of every such class in terms of the contraction functions $\xpij{\beta(p)}{ij}$ given in the statement of the lemma.
  \end{proof}

\subsection{The key algebra epimorphism}
  To state the key result, observe that the composition of the quiver presentation $\beta_J\colon Q_J^*\to A_J$ with the natural inclusion of $A_J$ into $A$ defines a $\kk$-algebra homomorphism $\iota_J\circ \beta_J\colon \kk Q_J^*\rightarrow A$.
  
 \begin{proposition}
 \label{prop:epi}
  Let $J, K\subseteq Q_0^*$ satisfy \eqref{eqn:partition}. There is a surjective homomorphism 
     \[
     \psi\colon \kk\big[\!\Rep(A_J,v_J)\big]\longrightarrow \kk\big[\!\Rep(A,v)\big]^{G_K}
     \]
  of commutative $\kk$-algebras. In particular, the algebra $\kk\big[\!\Rep(A,v)\big]^{G_K}$ is generated by the finite set $ \{\xpij{(\iota_J\circ\beta_J)(a)}{ij} \mid a \text{ is an arrow in }Q_J^*, 1\leq i\leq v_{\head(a)}, 1\leq j\leq v_{\tail(a)}\}$.
 \end{proposition}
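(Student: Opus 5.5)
The plan is to define $\psi$ first on the polynomial ring of the quiver $Q_J^*$, then check that it kills the relations and that its image is everything.

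\emph{Step 1: definition.} Recall from \eqref{eqn:kRep} that $\kk[\Rep(Q_J^*,v_J)]$ is a polynomial ring in the variables $\xaij{a}{ij}$, one for each arrow $a$ of $Q_J^*$ and each $1\leq i\leq v_{\head(a)}$, $1\leq j\leq v_{\tail(a)}$. Define a $\kk$-algebra homomorphism
\[
\widetilde{\psi}\colon \kk[\Rep(Q_J^*,v_J)]\longrightarrow \kk[\Rep(A,v)]^{G_K},\qquad
\xaij{a}{ij}\longmapsto \xpij{(\iota_J\circ\beta_J)(a)}{ij}.
\]
This is well defined for two reasons. First, $(\iota_J\circ\beta_J)(a)$ lies in $e_{\head(a)}Ae_{\tail(a)}$ with $\head(a),\tail(a)\in\{\infty\}\cup J$, so its contraction function is defined by Lemma~\ref{lem:restrictfunctions}. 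Second, that function is $G_K$-invariant, since both endpoints lie outside $K$ and $v_J$ agrees with $v$ on $\{\infty\}\cup J$.

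\emph{Step 2: multiplicativity along paths.} For a path $p=a_\ell\cdots a_1$ in $Q_J^*$, the matrix identity $(B_{a_\ell}\cdots B_{a_1})_{ij}=\sum_{k}(B_{a_\ell})_{ik}\,(B_{a_{\ell-1}}\cdots B_{a_1})_{kj}$ holds. Using it together with the fact that $\iota_J\circ\beta_J$ is an algebra homomorphism, induction on $\ell$ gives
\[
\widetilde\psi(\xpij{p}{ij})=\xpij{(\iota_J\circ\beta_J)(p)}{ij}.
\]
The sum over the intermediate index $k$ is indexed by $v_{\head}$ of an intermediate vertex, which again lies in $\{\infty\}\cup J$, so the same dimensions appear on both sides. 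Extending linearly, the identity holds for every $g\in e_{\head}\kk Q_J^* e_{\tail}$. In particular, if $g\in\ker(\beta_J)$ then $(\iota_J\circ\beta_J)(g)=0$ in $A$, so $\widetilde\psi(\xpij{g}{ij})=0$. By \eqref{eqn:kertau}, applied to $\beta_J$, this means $\ker(\sigma_J)\subseteq\ker(\widetilde\psi)$, and hence $\widetilde\psi$ descends to $\psi$ on $\kk[\Rep(A_J,v_J)]$.

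\emph{Step 3: surjectivity.} By Lemma~\ref{lem:nocyclesneededinA}, the algebra $\kk[\Rep(A,v)]^{G_K}$ is generated by functions $\xpij{\beta(p)}{ij}$, where $p$ is a path in $Q^*$ with head and tail in $\{\infty\}\cup J$. Such a $\beta(p)$ lies in $A_J$. Since $\beta_J$ is surjective (Lemma~\ref{lem:AJA}), we can write $\beta(p)=(\iota_J\circ\beta_J)(g)$ for some $g\in e_{\head}\kk Q_J^*e_{\tail}$. By Step 2, $\xpij{\beta(p)}{ij}=\widetilde\psi(\xpij{g}{ij})$, which is a polynomial in the images of the arrow variables. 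So $\psi$ is surjective, and the finite generating set in the statement is exactly the set of images of the arrow variables.

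The main obstacle is the well-definedness in Step 2. One has to make sure that the contraction function of an element of $A$ is independent of its lift, which is Lemma~\ref{lem:restrictfunctions} applied to $\beta\colon\kk Q^*\to A$, and that the index ranges match because $v_J$ is the restriction of $v$. The real content, which is removing the trace generators supported on $K$, has already been handled in Lemma~\ref{lem:nocyclesneededinA}.
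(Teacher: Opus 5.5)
Your proposal is correct and follows the paper's proof essentially step for step. You define the map on the polynomial ring of $Q_J^*$ by sending each arrow variable to the contraction function of $(\iota_J\circ\beta_J)(a)$, check that it kills $\ker(\sigma_J)$, and obtain surjectivity from Lemma~\ref{lem:nocyclesneededinA} by lifting each $\beta(p)$ into $A_J$. Your Step~2 also makes explicit the identity $\widetilde\psi(\xpij{g}{ij})=\xpij{(\iota_J\circ\beta_J)(g)}{ij}$ for arbitrary $g\in e_{\head}\kk Q_J^*e_{\tail}$, which the paper uses tacitly both when showing the relations are killed and in the surjectivity step.
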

 \begin{proof}
       We construct a map $\overline{\psi}$ that fits into a commutative diagram 
   \begin{equation}
\label{eqn:commutativealgebrahomosJ}
\begin{tikzcd}
  \kk[\Rep(Q_J^*,v_J)] \ar[d,swap,"\sigma_J"]\ar[dr,"\overline{\psi}"] & \\
  \kk[\Rep(A_J,v_J)] \ar[r,"\psi"] & \kk[\Rep(A,v)]^{G_K}
  \end{tikzcd}
  \end{equation}
of $\kk$-algebra homomorphisms, where $\sigma_J$ is the quotient map from \eqref{eqn:sigmaJ}. For each arrow $a$ in $Q_J^*$ and each index $1\leq i\leq v_{\head(a)}$ and $1\leq j\leq v_{\tail(a)}$, define
\[
\overline{\psi}(\xaij{a}{ij}) = \xaij{(\iota_J\circ\beta_J)(a)}{ij} \in \kk[\Rep(A,v)].
\]
 Since $\kk[\Rep(Q_J^*,v_J)]$ is a polynomial ring in the variables $\xpij{a}{ij}$, this extends to a $\kk$-algebra homomorphism $\overline{\psi}$. For each arrow $a$ in $Q_J^*$, the element $(\iota_J\circ\beta_J)(a)$ lies in $e_{\head(a)}A e_{\tail(a)}$ for $\tail(a), \head(a)\in \{\infty\}\cup J$, so each $ \xaij{(\iota_J\circ\beta_J)(a)}{ij} \in \kk[\Rep(A,v)]$ is $G_K$-invariant. To see that $\overline{\psi}$ factors via $\kk[\Rep(A_J,v_J)]$, recall from \eqref{eqn:kertau} shows that $\ker(\sigma_J)$ is generated by functions of the form $\xpij{p}{ij}$ for some $p\in e_{\head}\ker(\beta_J)e_{\tail}$ and $1\leq i\leq v_{\head}$, $1\leq j\leq v_{\tail}$, where $\head, \tail\in \{\infty\}\cup J$. Therefore $(\iota_J\circ\beta_J)(g)=0$ and hence $\overline{\psi}(\xaij{g}{ij}) = \xaij{0}{ij}$ is zero in $\kk[\Rep(A,v)]^{G_K}$. It follows that diagram \eqref{eqn:commutativealgebrahomosJ} exists and $\psi$ satisfies
    \[
 \psi(\xaij{\beta_J(a)}{ij}) = \xaij{(\iota_J\circ \beta_J)(a)}{ij} \in \kk[\Rep(A,v)]^{G_K}.
 \]
 
 For surjectivity, Lemma~\ref{lem:nocyclesneededinA} shows that $\kk[\Rep(A,v)]^{G_K}$ is generated by functions $\xpij{\beta(p)}{ij}$ defined by paths $p$ in $Q$ that don't traverse arrow $b^*$ and such that $\head(p), \tail(p)\in \{\infty\}\cup J$, where $1\leq i\leq v_{\head(p)}$ and $1\leq j\leq v_{\tail(p)}$.  Every such $\beta(p)$ lies in $e_{\head(p)} A e_{\tail(p)} = e_{\head(p)}\iota_J(A_J) e_{\tail(p)}$, so there exists $q\in e_{\head(p)}A_J e_{\tail(p)}$ such that $\iota_J(q) = \beta(p)$. Then $\psi(\xaij{q}{ij}) = \xaij{\iota_J(q)}{ij} = \xpij{\beta(p)}{ij}$, so $\psi$ is surjective.
 \end{proof}

  \begin{example}
 When $0\in J$, the group $G_K$ acts trivially on the variables $\xpij{b}{1j}$ for $1\leq j\leq n$ indexed by the unique arrow $b$ in $Q^*$ with tail at vertex $\infty$. It follows that
  \begin{equation}
      \label{eqn:algtensor}
  \kk[\Rep(A,v)]^{G_K}\cong \kk[\xpij{b}{11}, \dots, \xpij{b}{1n}]\otimes_\kk \kk[\Rep(\overline{\Pi},n\delta)]^{G_K}.
   \end{equation}
   In the special case from Example~\ref{exa:A_Jfor0} above when $\Gamma$ is of type $A_1$, $n=2$ and $K=\{1\}$, we describe in \cite[Example~4.4]{CY23} how elimination theory computes a presentation of $\kk[\Rep(\PiMcKay,2\delta)]^{G_K}$ as a quotient of the polynomial ring $\kk[x_{ij},y_{ij},z_{ij} \mid 1\leq i,j\leq 2]$ by the ideal with precisely the same generators as in \eqref{eqn:A1n2finalideal}. Take the tensor product of $\kk[\Rep(\PiMcKay,2\delta)]^{G_K}$ with $\kk[w_1, w_2]$ as in \eqref{eqn:algtensor} above, and compare with Example~\ref{exa:A_Jfor0}, to see explicitly in this case that $\kk[\Rep(A,v)]^{G_K}$ is isomorphic to $\kk[\Rep(A_J,v_J)]$ for $J=\{0\}$. In other words, the surjective $\kk$-algebra homomorphism $\psi$ from Proposition~\ref{prop:epi} is an isomorphism, so the affine quotient $\Rep(A,v)\git G_K$ is reduced and irreducible in this case. 
  \end{example}

 \subsection{A closed immersion of GIT quotients}  The geometric consequences of Proposition~\ref{prop:epi} can be seen by studying the action of the reductive group 
  \[
  \GL(v_J):=\GL(1)\times \prod_{j\in J} \GL(v_j)
  \]
  on $\Rep(Q_J^*,v_J)$ by conjugation, and this restricts to an action on the subscheme $\Rep(A_J,v_J)$. The partition \eqref{eqn:partition} implies that $\GL(v)=\GL(v_J)\times G_K$, so $\GL(v_J)\cong \GL(v)/G_K$ also acts on the affine quotient $\Rep(A,v)]\git G_K:=\Spec \kk[\Rep(A,v)]^{G_K}$.

 \begin{lemma}
 \label{lem:epi}
  The map $\psi$ induces a $\GL(v_J)$-equivariant closed immersion of affine schemes   
   \[
   \varphi=\Spec(\psi)\colon \Rep(A,v)\git G_K \longrightarrow \Rep(A_J,v_J).
   \]
    \end{lemma}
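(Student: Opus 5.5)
Since Proposition~\ref{prop:epi} provides a surjective $\kk$-algebra homomorphism $\psi\colon \kk[\Rep(A_J,v_J)]\to \kk[\Rep(A,v)]^{G_K}$, applying $\Spec$ immediately gives a closed immersion $\varphi=\Spec(\psi)$ of affine schemes. Nothing further is needed for the closed-immersion claim. The real content of the lemma is the $\GL(v_J)$-equivariance, which I will verify dually: $\psi$ must intertwine the induced actions of $\GL(v_J)$ on the two coordinate rings. Because $\GL(v)=\GL(v_J)\times G_K$ and the two factors commute, $\GL(v_J)$ preserves the $G_K$-invariants, so the target carries a well-defined action.

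The first step is to reduce to generators. The map $\sigma_J$ is $\GL(v_J)$-equivariant and surjective, and $\psi\circ\sigma_J=\overline{\psi}$. It therefore suffices to show that $\overline{\psi}\colon \kk[\Rep(Q_J^*,v_J)]\to \kk[\Rep(A,v)]$ is $\GL(v_J)$-equivariant, where $\GL(v_J)$ acts on the target as a subgroup of $\GL(v)$. Since $\overline{\psi}$ is an algebra homomorphism and the actions are by algebra automorphisms, it is enough to check equivariance on the variables $\xaij{a}{ij}$.

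The key step is a matrix-level compatibility. For $g\in\GL(v)$ and any element $f\in e_{\head}Ae_{\tail}$, I claim the matrix-valued function $B\mapsto (\xpij{f}{ij}(B))_{i,j}$ satisfies $(\xpij{f}{ij}(g\cdot B))_{ij}=g_{\head}\,(\xpij{f}{ij}(B))_{ij}\,g_{\tail}^{-1}$. To see this, lift $f$ to a combination of paths via Lemma~\ref{lem:restrictfunctions}; along each path $a_\ell\cdots a_1$ the intermediate factors $g_{\head(a_k)}^{-1}g_{\head(a_k)}$ telescope away. Apply this with $f=(\iota_J\circ\beta_J)(a)$, whose head and tail lie in $\{\infty\}\cup J$. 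For $g\in\GL(v_J)$, the transformation rule of the matrix $(\overline{\psi}(\xaij{a}{ij}))$ is then exactly that of the matrix of variables $(\xaij{a}{ij})$ on $\Rep(Q_J^*,v_J)$, since $\GL(v_J)$ acts on $B_a$ by $g_{\head(a)}B_ag_{\tail(a)}^{-1}$. The two actions on functions are both defined by $(h\cdot F)(B)=F(h^{-1}\cdot B)$, so the matrix identity translates entrywise into $\overline{\psi}(h\cdot \xaij{a}{ij})=h\cdot\overline{\psi}(\xaij{a}{ij})$.

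I expect no real obstacle here. The only point needing care is the telescoping identity for elements of $A$ rather than single paths. This is handled by linearity together with the well-definedness in Lemma~\ref{lem:restrictfunctions}, using that the $G_K$-components of $g$ cancel or act trivially, because the endpoints avoid $K$.
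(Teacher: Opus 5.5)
Your proposal is correct and follows essentially the same argument as the paper. The closed immersion comes from surjectivity of $\psi$, and equivariance comes from the fact that the matrix $B_{(\iota_J\circ\beta_J)(a)}$ transforms as $g_{\head(a)}(\cdot)\,g_{\tail(a)}^{-1}$ under $\GL(v_J)$. The only difference is presentation: you check this dually on coordinate rings, reducing via $\sigma_J$ to the variables $\xaij{a}{ij}$, whereas the paper checks $\varphi(g\cdot B)=g\cdot\varphi(B)$ directly on points using $\varphi(B)_{\beta_J(a)}=B_{(\iota_J\circ\beta_J)(a)}$.
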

 \begin{proof}
  Each $B\in \Rep(A,v)\git G_K$ is the $G_K$-orbit of some tuple of matrices $(B_\alpha)_{\alpha\in Q_1^*}\in \Rep(A,v)$. For the quotient map $\beta_J\colon \kk Q_J^*\to A_J$ and for each arrow $a$ in $Q_J^*$, the class $\beta_J(a)\in A_J$ satisfies
  \[
 \xaij{\beta_J(a)}{ij}(\varphi(B)) = \psi(\xaij{\beta_J(a)}{ij})(B) = \xaij{(\iota_J\circ\beta_J)(a)}{ij}(B) = (B_{(\iota_J\circ\beta_J)(a)})_{ij}
\]
 because $\psi$ is the pullback along $\varphi$. More succinctly, if we write $\varphi(B)=(\varphi(B)_{\beta_J(a)})\in \Rep(A_J,v_J)$ as a tuple of $v_{\head(a)}\times v_{\tail(a)}$ matrices, one for each arrow $a$ in $Q_J^*$, then 
 \begin{equation}
 \label{eqn:varphiBa}
 \varphi(B)_{\beta_J(a)} = B_{(\iota_J\circ \beta_J)(a)}.
 \end{equation}
  
 Therefore $g=(g_i)\in \GL(v_J)\cong \GL(v)/G_K$ satisfies 
 \[
 \xaij{\beta_J(a)}{ij}(\varphi(g\cdot B)) = \psi(\xaij{\beta_J(a)}{ij})(g\cdot B) = \xpij{(\iota_J\circ \beta_J)(a)}{ij}(g\cdot B) = (g_{\head(a)}B_{(\iota_J\circ \beta_J)(a)}g_{\head(a)})_{ij}
 \]
 for each $1\leq i\leq v_{\head(a)}$ and $1\leq j\leq v_{\tail(a)}$; that is, 
 \[
 \varphi(g\cdot B)_{\beta_J(a)} = g_{\head(a)}B_{(\iota_J\circ \beta_J)(a)}g_{\tail(a)}^{-1}.
 \]
 
  On the other hand, let the same $g\in \GL(v_J)$ act on $\varphi(B)=(\varphi(B)_{\beta_J(a)})$ and apply \eqref{eqn:varphiBa} to get
 \[
  (g\cdot \varphi(B))_{\beta_J(a)} = g_{\head(a)} \varphi(B)_{\beta_J(a)} g_{\tail(a)}^{-1} = g_{\head(a)} B_{(\iota_J\circ\beta_J)(a)} g_{\tail(a)}^{-1}.
\]
Therefore $\varphi(g\cdot B)_{\beta_J(a)}  = (g\cdot \varphi(B))_{\beta_J(a)}$ for each arrow $a$ in $Q_J^*$, so $\varphi(g\cdot B)  = g\cdot \varphi(B)$ as required. 
 \end{proof}

  Recall from \eqref{eqn:Thetav} that $(\GL(v)/\kk^\times)^\vee\otimes_\ZZ \QQ$ is identified with the rational vector space $\Theta_v$ containing stability parameters. 
Similarly, we identify $\big(\GL(v_J)/\kk^\times\big)^\vee\otimes_\ZZ \QQ$ with the rational vector space
 \[
  \Theta_{v_J}:= \big\{\eta\in \Hom_{\ZZ}(\ZZ\oplus \ZZ^J,\QQ)\mid \eta(v_J)=0\big\}.
 \]
The quotient $\GL(v)\rightarrow \GL(v)/G_K\cong \GL(v_J)$ induces the inclusion $\Theta_{v_J}\hookrightarrow \Theta_v$ sending $\eta$ to $\overline{\eta}$, where
\[
\overline{\eta}_k = \left\{\begin{array}{cr} \eta_k & \text{if }k\in \{\infty\}\cup J \\ 0 & \text{if }k\in K,\end{array}\right.
\]
 that is, we simply augment $\eta$ with additional zeroes for each vertex of $Q$ that lies in the set $K$. 

Results of King~\cite{King94} imply that the GIT quotient $\Rep(A_J,v_J)\git_\eta \GL(v_J)$ is the coarse moduli space of $\eta$-semistable $A_J$-modules of dimension vector $v_J$ up to $\Seshadri$-equivalence. Since $v_J$ is primitive, $\Rep(A_J,v_J)\git_\eta \GL(v_J)$ is the fine moduli space of $\eta$-stable $A_J$-modules of dimension vector $v_J$ (up to isomorphism) whenever $\eta$ is \emph{generic}, that is, whenever every $\eta$-semistable $A_J$-module of dimension vector $v_J$ is $\eta$-stable. 
 
\begin{theorem}
\label{thm:closedimmersion}
  Let $J\subseteq \{0,1,\dots, r\}$ be non-empty. For $\eta\in \Theta_{v_J}$ satisfying $\eta_j\geq 0$ for all $j\in J\setminus \{0\}$ and $\eta_\infty\leq 0$, the $\GL(v_J)$-equivariant closed immersion $\varphi$ induces a commutative diagram 
  \begin{equation}
\label{eqn:commutativeVGIT}
\begin{tikzcd}
 \mathfrak{M}_{\overline{\eta}} \ar[d]\ar[r,"\varphi_\eta"] & \Rep(A_J,v_J)\git_\eta \GL(v_J) \ar[d] \\
 \Sym^n(\mathbb{A}^2/\Gamma) \ar[r,"\varphi_0"] &  \Rep(A_J,v_J)\git \GL(v_J), 
  \end{tikzcd}
  \end{equation}
   where the vertical maps are projective morphisms and the horizontal maps are closed immersions.
   \end{theorem}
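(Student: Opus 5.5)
The plan is to factor $\varphi$ through GIT quotients in two steps. First I pass from $\Rep(A,v)$ to the affine quotient $\Rep(A,v)\git G_K$. Then I compare $\GL(v_J)$-semi-invariants on $\Rep(A,v)\git G_K$ and on $\Rep(A_J,v_J)$.

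Since $\GL(v)=\GL(v_J)\times G_K$ and $\chi_{\overline{\eta}}$ is trivial on $G_K$, the $\chi_{k\overline{\eta}}$-semi-invariants of $\GL(v)$ on $\kk[\Rep(A,v)]$ are exactly the $\chi_{k\eta}$-semi-invariants of $\GL(v_J)$ on $\kk[\Rep(A,v)]^{G_K}$. This gives
\[
\Rep(A,v)\git_{\overline{\eta}}\GL(v)\cong (\Rep(A,v)\git G_K)\git_\eta \GL(v_J).
\]
By Proposition~\ref{prop:AtoPi}, the left side is $\mathfrak{M}_{\overline{\eta}}$ over $\Sym^n(\mathbb{A}^2/\Gamma)$, provided $\overline{\eta}$ lies in the closure of $F$. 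I would check this using the sign hypotheses $\eta_j\geq 0$ for $j\in J\setminus\{0\}$, $\overline{\eta}_k=0$ for $k\in K$, and $\eta_\infty\leq 0$. Indeed $\overline{\eta}(\delta)=-\eta_\infty/n\cdot$ (up to the normalisation $\overline{\eta}(v)=0$) is $\geq 0$, and $\overline{\eta}(\rho_i)\geq 0$ for $i\neq 0$. I expect Proposition~\ref{prop:AtoPi} to apply on $\overline{F}$, because its proof handles any $\theta\in\overline{C}$ for a chamber $C\subset F$.

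Next, apply the $\GL(v_J)$-equivariant closed immersion $\varphi$ of Lemma~\ref{lem:epi}. The surjection $\psi$ is $\GL(v_J)$-equivariant. Since $\GL(v_J)$ is reductive in characteristic zero, taking $\chi_{k\eta}$-semi-invariants is exact, so $\psi$ induces a surjection of graded rings
\[
\bigoplus_{k\geq 0}\kk[\Rep(A_J,v_J)]_{k\eta}\longrightarrow \bigoplus_{k\geq 0}\big(\kk[\Rep(A,v)]^{G_K}\big)_{k\eta}.
\]
Taking $\Proj$ of this graded surjection gives a closed immersion $\varphi_\eta$. Taking degree zero gives a closed immersion $\varphi_0$ of affine quotients, whose source is $\Rep(A,v)\git\GL(v)\cong\Sym^n(\mathbb{A}^2/\Gamma)$ by Proposition~\ref{prop:M0RepAv}.

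The vertical maps are the canonical projective morphisms from $\Proj$ of a graded ring to $\Spec$ of its degree-zero part. Each graded ring is finitely generated over its degree-zero part by reductivity. Commutativity of the square holds because the graded surjection restricts in degree zero to the map defining $\varphi_0$. One subtlety is that $\Proj$ of a graded surjection is a priori defined only away from the vanishing locus of the image of the irrelevant ideal. Here the map is surjective in every degree, so it is defined everywhere and is a closed immersion. One must also use that the composite with the isomorphism from Proposition~\ref{prop:AtoPi} is compatible over $\Sym^n(\mathbb{A}^2/\Gamma)$.

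The main obstacle I expect is the first step: confirming that $\overline{\eta}$ lies in $\overline{F}$ under exactly the stated sign conditions. The vertex $0$ carries no sign condition, so I need to verify that $\overline{\eta}(\delta)>0$ or $\geq 0$ follows from $\eta_\infty\leq 0$ and $\overline{\eta}(v)=0$. If $\overline{\eta}$ only lands on the boundary, I must also check that the surjectivity argument in the proof of Proposition~\ref{prop:AtoPi}, via \cite[Lemma~2.2]{CGGS21}, still applies there. This includes the degenerate case $\eta=0$, which is covered by Proposition~\ref{prop:M0RepAv}.
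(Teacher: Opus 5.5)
Your proposal is correct and takes essentially the same route as the paper. It identifies $\mathfrak{M}_{\overline\eta}$ with $\Rep(A,v)\git_{\overline\eta}\GL(v)$ via Proposition~\ref{prop:AtoPi}, rewrites this as $(\Rep(A,v)\git G_K)\git_\eta\GL(v_J)$ by matching semi-invariants, and pushes forward along the equivariant closed immersion $\varphi$; your graded-surjection and $\Proj$ argument just makes explicit the closed immersion the paper asserts in one line. Your worry about the sign conditions is settled by your own computation $\overline\eta(\delta)=-\eta_\infty/n\geq 0$ and $\overline\eta(\rho_i)=\overline\eta_i\geq 0$ for $i\neq 0$, so $\overline\eta$ lies in the closed cone (the paper writes $\overline\eta\in F$ with $F$ read as closed), where the proof of Proposition~\ref{prop:AtoPi} applies, including at $\eta=0$.
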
 

 \begin{proof}
   The conditions on $\eta$ are equivalent to having $\overline{\eta}\in F$, so Proposition~\ref{prop:AtoPi} gives an isomorphism $\mathfrak{M}_{\overline{\eta}}\cong \Rep(A,v)\git_{\overline{\eta}} \GL(v)$ of schemes over $\Sym^n(\mathbb{A}^2/\Gamma)$.  Define $K\subset Q_0^*$ as in \eqref{eqn:partition}. The map $\varphi$ from Lemma~\ref{lem:epi} is $\GL(v_J)$-equivariant, so each $\eta\in \Theta_{v_J}$ determines a closed immersion      
     \begin{equation}
         \label{eqn:GmodHisom}
   \big(\Rep(A,v)\git G_K\big)\git_{\eta} \big(\GL(v)/G_K\big) \longrightarrow \Rep(A_J,v_J)\git_\eta \GL(v_J) 
    \end{equation}
   of GIT quotients. Let $f\in \kk[\Rep(A,v)]$ and consider $k\geq 0$. Then $f$ is $k\overline{\eta}$-semi-invariant with respect to the $\GL(v)$-action on $\kk[\Rep(A,v)]$ if and only if $f$ is both $G_K$-invariant and, when regarded as an element of $\kk[\Rep(A,v)]^{G_K}$, $f$ is $k\eta$-semi-invariant with respect to the action of $\GL(v)/G_K$. Therefore, if we write subscripts to indicate taking the appropriate graded slice, then
   \begin{eqnarray}
   \big(\Rep(A,v)\git G_K\big)\git_{\eta} \big(\GL(v)/G_K\big) & = & \Proj \bigoplus_{k\geq 0} \big(\kk[\Rep(A,v)]^{G_K}\big)_{k\eta} \nonumber\\
   & \cong &   \Proj \bigoplus_{k\geq 0} \kk[\Rep(A,v)]_{k\overline{\eta}} \nonumber \\
   & = & \Rep(A,v)\git_{\overline{\eta}} \GL(v), \label{eqn:GGGisom}
   \end{eqnarray}
 as schemes over $\Rep(A,v)\git \GL(v)$ which, by Proposition~\ref{prop:M0RepAv}, is isomorphic to $\Sym^n(\mathbb{A}^2/\Gamma)$. The composition of the maps \eqref{eqn:GmodHisom} and \eqref{eqn:GGGisom} together with the isomorphism from the first sentence of this proof defines $\varphi_\eta$. Combining the special case $\eta=0$ with Proposition~\ref{prop:M0RepAv}  defines the lower horizontal map from \eqref{eqn:commutativeVGIT}, and we complete the diagram with projective maps given by VGIT. 
 \end{proof}

 \begin{remark}
 \label{rem:semistable}
The proof of Theorem~\ref{thm:closedimmersion} shows that the restriction of the categorical quotient of $\Rep(A,v)$ by $G_K$ defines a morphism $\Rep(A,v)^{\overline{\eta}}\to (\Rep(A,v)\git G_K)^\eta$ from the $\overline{\eta}$-semistable locus in $\Rep(A,v)$ to the $\eta$-semistable locus in $\Rep(A,v)\git G_K$.  
 \end{remark}
  
The set of stability parameters of the form $\overline{\eta}\in \Theta_v$ that arise in Theorem~\ref{thm:closedimmersion} with $J\neq \{0,\dots, r\}$ is almost equal to the boundary of $F$. The exceptions are: 
 \begin{enumerate}
 \item the only stability conditions in the boundary of $F$ that are not of this form are those in the relative interior of $F\cap \delta^\perp =\{\theta\in F \mid \theta(\delta)=0\}$. In that case, Kuznetsov~\cite{Kuznetsov07} observed that $
 \mathfrak{M}_\theta \cong \Sym^n(S)$ for $\theta\in \operatorname{relint}(F\cap \delta^\perp)$, where $S\to \mathbb{A}^2/\Gamma$ is the minimal resolution; and
 \item the only stability conditions of this form that do not lie in the boundary of $F$ are those in the relative interior of the facet of the closed cone $\overline{C_+}$ contained in 
 $\{\theta \mid \theta(\rho_0)=0\}$.
 \end{enumerate} 
 
 \begin{example}
     To illustrate this set of stability conditions in type $A_2$ with $n=3$, we draw a slice of the cone $F$ and its decomposition into GIT cones in Figure~\ref{fig:etabar}.  The lines in red indicate the intersection of $F$ with the hyperplanes $\{\theta \mid \theta(\rho_k)=0\}$ for $k=0,1,2$; only the hyperplane for $k=0$ passes through the interior of $F$ (and that's the only case where $0\not\in J$). 
   \begin{figure}[!ht]
   \centering
       \begin{tikzpicture}[baseline={(0,0)},xscale=1.25,yscale=1]
			\tikzset{>=latex}
            \draw (4,4)--(2,0)--(0,4)
            --(4,4)
            --(10/11,24/11)
            --(34/11,24/11)
            --(0,4)
            --(2.75,1.5)
            --(1.25,1.5)
            --(4,4)
            --(0,4)
            -- cycle;
            \draw [line width = 0.3mm, color=red] (4,4)--(2,0);
            \draw [line width = 0.3mm, color=red] (0,4)--(2,0);
            \draw [line width = 0.3mm, color=red] (2.75,1.5)--(1.25,1.5);
            \node at (2,1) {$C_+$};
            \node at (2,3.5) {$C_-$};
 \filldraw(2,0) circle (1pt) node[align=left,   below] {$\theta_0$};
			\end{tikzpicture}
           \caption{GIT cones in  $F$ for $n=3$ and type $A_2$, with parameters $\overline{\eta}$  drawn in red}
            \label{fig:etabar}
  \end{figure}

 \end{example}

 \section{Hilbert schemes and Quot schemes for Kleinian singularities}

 We now provide a geometric interpretation for some of the quiver moduli spaces associated to the algebra $A_J$ that appear in Theorem~\ref{thm:closedimmersion}, with a focus on the case $J=\{0\}$.
 
\subsection{Quiver construction of the Hilbert scheme of points}  \label{sec:relativeHilb} 
 For an integer $n\geq 1$, the Hilbert scheme of $n$-points on $\mathbb{A}^2/\Gamma$, denoted $\Hilb^n(\mathbb{A}^2/\Gamma)$, is the scheme representing the functor sending each $\kk$-scheme $T$ to the set 
 \[
 \left\{ Z \subset (\mathbb{A}^2/\Gamma)\times T \mid Z\to T \text{ is flat, finite and surjective of degree }n \right\}
 \]
 (see \cite{FGA4} or \cite[Tag 0B94]{stacks-project}).  From an algebraic viewpoint, $\Hilb^n(\mathbb{A}^2/\Gamma)$ is the fine moduli space of cyclic $\kk[x,y]^{\Gamma}$-modules that are isomorphic to $\kk^n$ as a $\kk$-vector space.  The universal family $\mathcal{Z}_\Gamma$ over $\Hilb^n(\mathbb{A}^2/\Gamma)$ is a locally-free sheaf of rank $n$, together with a surjective sheaf homomorphism 
 \[
 \mathcal{O}_{\Hilb^n(\mathbb{A}^2/\Gamma)} \longrightarrow \mathcal{Z}_\Gamma,
 \]
 where the fibre over any closed point of $\Hilb^n(\mathbb{A}^2/\Gamma)$ is the corresponding $\kk[x,y]^{\Gamma}$-module $M\cong \kk^n$ together with a surjective $\kk[x,y]^{\Gamma}$-module homomorphism $\kk[x,y]^{\Gamma}\to M$ that picks out the unique (up to scalar) generator as a  $\kk[x,y]^{\Gamma}$-module.

   We now recall the GIT construction of $\Hilb^n(\mathbb{A}^2/\Gamma)$ from \cite{CGGS21}.    For $J=\{0\}$, the quiver $Q_J^*$ and the ideal of relations $\ker(\beta_J)$ for the presentation $\beta_J\colon \kk Q_J^*\to A_J$ from Lemma~\ref{lem:AJA} are described explicitly in Example~\ref{exa:A_Jfor0}, as is the representation scheme $\Rep(A_J, v_J)$ for $v_J=(1,n)$. The group $\GL(v_J)=\GL(1)\times \GL(n)$ acts by conjugation on $\Rep(A_J, v_J)$, and we identify the character 
   \begin{equation}
       \label{eqn:eta}
   \eta:= (-n,1)\in \GL(v_J)^\vee
   \end{equation}
   of $\GL(v_J)$ with the determinant character of $\GL(n)$ under the inclusion  $\GL(n)^\vee\to \GL(v_J)^\vee$ induced by projection to the second factor $\GL(v_J)\to  \GL(n)$.

\begin{proposition}
\label{prop:ReptoHilb}
 For $n\geq 1$, $J=\{0\}$ and the character $\eta$ as above, there is an isomorphism 
 \begin{equation}
\label{eqn:relativeHilb}
 \omega\colon \Rep(A_J,v_J)\git_\eta \GL(v_J)\longrightarrow \Hilb^n(\mathbb{A}^2/\Gamma)
 \end{equation} 
 of schemes over the affine quotient $\Rep(A_J,v_J)\git \GL(v_J)\cong \Sym^n(\mathbb{A}^2/\Gamma)$.
 \end{proposition}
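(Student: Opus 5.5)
The plan is to show that both sides represent the same functor on $\kk$-schemes, using King's moduli interpretation for the left-hand side. Write $R_0=\kk[x,y]^\Gamma\cong\kk[z_1,z_2,z_3]/(f_\Gamma)$. By Example~\ref{exa:A_Jfor0}, a point of $\Rep(A_J,v_J)$ is a vector $w\in\Mat(n\times 1)$ (the arrow $b$) together with pairwise commuting $B_1,B_2,B_3\in\Mat(n\times n)$ satisfying $f_\Gamma(B_1,B_2,B_3)=0$. Equivalently, it is an $R_0$-module structure on $\kk^n$ together with a vector $w\in\kk^n$.

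The first step is to identify $\eta$-stability for $\eta=(-n,1)$. A submodule with dimension vector $(0,m)$ for $0<m\leq n$ has $\eta$-value $m>0$. A submodule with dimension vector $(1,m)$ has $\eta$-value $-n+m$, which is $\leq 0$, with equality only when $m=n$. Hence a representation is $\eta$-semistable if and only if no proper $R_0$-submodule of $\kk^n$ contains $w$, that is, if and only if $w$ generates $\kk^n$ as an $R_0$-module. In that case it is automatically $\eta$-stable, so $\eta$ is generic. The $\GL(1)$-factor acts trivially modulo the diagonal scalars, and the stabiliser of a cyclic pair is trivial, so $\GL(v_J)/\kk^\times$ acts freely on the stable locus. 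It follows that $\Rep(A_J,v_J)\git_\eta\GL(v_J)$ is a fine moduli space in King's sense.

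Next I would construct mutually inverse natural transformations of functors. Given a $T$-family of stable representations, namely a rank $n$ locally free sheaf $\mathcal{R}_0$ on $T$ with commuting endomorphisms $B_k$ satisfying $f_\Gamma=0$ and a section $w\colon\mathcal{O}_T\to\mathcal{R}_0$ that generates fibrewise, the map $R_0\otimes\mathcal{O}_T\to\mathcal{R}_0$ sending $g\mapsto g(B)w$ is surjective by Nakayama. Its kernel is an ideal sheaf defining a $T$-flat family $Z\subset(\mathbb{A}^2/\Gamma)\times T$ of length $n$. Conversely, given such a $Z$, the pushforward $\pi_*\mathcal{O}_Z$ is locally free of rank $n$, with multiplication by $z_1,z_2,z_3$ and the section $1$. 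Locally on $T$ this gives a point of the stable locus, well defined up to the $\GL(n)$-action. These two constructions are inverse, and the uniqueness of the generator up to scalar matches the quotient by $\GL(v_J)$. Because $\Rep(A_J,v_J)^{\eta\text{-st}}\to\Rep(A_J,v_J)\git_\eta\GL(v_J)$ is a principal $\GL(v_J)/\kk^\times$-bundle, families on $T$ up to isomorphism correspond to $T$-points of the quotient. This yields the isomorphism $\omega$. Alternatively, one can cite \cite[Proposition~6.2]{CGGS21} directly, which is exactly this statement.

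Compatibility over the affine quotient follows because both maps to $\Sym^n(\mathbb{A}^2/\Gamma)$ are given by the characteristic polynomials, or traces, of $B_1,B_2,B_3$. On the Hilbert--Chow side this is the map to the support cycle. The main obstacle is the scheme-theoretic, rather than pointwise, identification of the GIT quotient with the moduli functor. I would handle it via Luna's étale slice / principal bundle statement for free actions of reductive groups, as in King's construction \cite{King94}, taking care that the $\GL(1)$ factor acts trivially.
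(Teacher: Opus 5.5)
Your proposal is correct and follows the paper: the paper's proof simply takes $\omega$ to be the inverse of the isomorphism from \cite[Proposition~6.2]{CGGS21}, which you also name as an alternative, and your argument is a self-contained version of the reasoning behind that citation (King's fine moduli space of $\eta$-stable modules identified with cyclic $R_0$-modules with a generator up to scalar). The one ingredient you take for granted is the scheme-theoretic identification $\Rep(A_J,v_J)\git\GL(v_J)\cong\Sym^n(\mathbb{A}^2/\Gamma)$ via traces, and the paper likewise imports this from Galluzzi--Vaccarino~\cite{GV10} rather than proving it.
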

 \begin{proof}
 The isomorphism $\omega$ is inverse to the isomorphism constructed in \cite[Proposition~6.2]{CGGS21}. The induced isomorphism $\Rep(A_J,v_J)\git \GL(v_J)\to \Sym^n(\mathbb{A}^2/\Gamma)$ of affine varieties was first written down explicitly by Galluzzi--Vaccarino~\cite[Section~7]{GV10}.
  \end{proof}

  \subsection{Main results for Kleinian singularities}

 As a corollary, we can now prove Theorem~\ref{thm:Hilbnintro}.

    \begin{proof}[Proof of Theorem~\ref{thm:Hilbnintro}]
 For $J=\{0\}$ and $v_J=(1,n)$, the space of stability conditions $\Theta_{v_J}$ is the $\QQ$-span of the vector $\eta$ from \eqref{eqn:eta}, and the inclusion $\Theta_{v_J}\hookrightarrow \Theta_v$ sends $\eta$ to $\overline{\eta}=(-n,1,0,0,\dots, 0)$. Composing the closed immersion $\varphi_\eta$ from Theorem~\ref{thm:closedimmersion} with the isomorphism from Proposition~\ref{prop:ReptoHilb} determines a commutative diagram
   \begin{equation}
\label{eqn:quivertoHilb}
\begin{tikzcd}
 \mathfrak{M}_{\overline{\eta}} \ar[rr,hook,"f"]\ar[dr,"g"] & & \Hilb^{n}(\mathbb{A}^2/\Gamma) \ar[dl,swap, "h"] \\
 & \Sym^n(\mathbb{A}^2/\Gamma) &
  \end{tikzcd}
  \end{equation}
   where the diagonal maps are projective morphisms and $f$ is a closed immersion. 
   
  The non-degeneracy locus $U\subset \Sym^n(\mathbb{A}^2/\Gamma)$ parametrising $n$ distinct unordered points in $\mathbb{A}^2/\Gamma$ is open and dense. The morphism $g$ is surjective and the variety $\mathfrak{M}_{\overline{\eta}}$ is irreducible by Lemma~\ref{lem:reduced}, so $g^{-1}(U)$ is open and dense. The image of $f$ is therefore contained in the unique irreducible component of $\Hilb^n(\mathbb{A}^2/\Gamma)$ containing $h^{-1}(U)$, and since $f$ is a closed immersion, it identifies $\mathfrak{M}_{\overline{\eta}}$ with the closure of $h^{-1}(U)$, that is, with the good component of $\Hilb^{n}(\mathbb{A}^2/\Gamma)$. The fact that the good component is normal is immediate from Lemma~\ref{lem:reduced}.
  
  When the base field is $\mathbb{C}$, the fact that the good component of $\Hilb^{n}(\mathbb{A}^2/\Gamma)$ has symplectic singularities is also immediate from Lemma~\ref{lem:reduced}. For the symplectic resolution, the stability condition $\overline{\eta}$ lies in the closure of precisely one GIT chamber $C_+$ in the cone $F$ by \cite[Lemma~2.3]{BC20}, so the identification of $F$ with the movable cone of the Hilbert scheme of $n$-points on the minimal resolution of $\mathbb{A}^2/\Gamma$ from \cite[Theorem~1.2]{BC20} shows that $\mathfrak{M}_{\overline{\eta}}$ admits a unique projective symplectic resolution $\mathfrak{M}_\theta \cong n\Gamma\text{-Hilb}(\mathbb{A}^2)$ for $\theta\in C_+$, and hence so does the good component of $\Hilb^n(\mathbb{A}^2/\Gamma)$.   
   \end{proof}

  \begin{corollary}
  \label{cor:irreducible}
 For $n\geq 1$, the underlying reduced subscheme $\Hilb^{n}(\mathbb{A}^2/\Gamma)_{\operatorname{red}}$ is isomorphic to the normal variety $\mathfrak{M}_{\theta_0}$. When the base field is $\mathbb{C}$, it has symplectic singularities and admits a unique projective, symplectic resolution.
   \end{corollary}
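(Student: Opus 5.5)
The plan is to show that the good component of $\Hilb^n(\mathbb{A}^2/\Gamma)$ is the whole of $\Hilb^n(\mathbb{A}^2/\Gamma)$ as a topological space, so that its reduced structure coincides with $\Hilb^{n}(\mathbb{A}^2/\Gamma)_{\operatorname{red}}$. Theorem~\ref{thm:Hilbnintro} then transfers every remaining assertion.

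First, recall the input from the introduction: Zheng~\cite{Zheng23} proved that $\Hilb^n(S)$ is irreducible of dimension $2n$ for any surface $S$ with canonical singularities. This applies in particular to $S=\mathbb{A}^2/\Gamma$. Alternatively, one can check directly that every closed point lies in the closure of $h^{-1}(U)$, where $h$ is the Hilbert--Chow morphism and $U$ the locus of $n$ distinct points, though irreducibility is what one really needs. Since $h^{-1}(U)$ is a nonempty open subset of the irreducible space $\Hilb^n(\mathbb{A}^2/\Gamma)$, its closure is everything. Hence the closed immersion $f\colon \mathfrak{M}_{\theta_0}\hookrightarrow \Hilb^n(\mathbb{A}^2/\Gamma)$ from diagram \eqref{eqn:quivertoHilb}, which identifies $\mathfrak{M}_{\theta_0}$ with the good component, is surjective on closed points.

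Next, $\mathfrak{M}_{\theta_0}$ is reduced by Lemma~\ref{lem:reduced}. A closed immersion from a reduced scheme factors through the reduced subscheme of the target, giving a closed immersion $\mathfrak{M}_{\theta_0}\to \Hilb^{n}(\mathbb{A}^2/\Gamma)_{\operatorname{red}}$. This map is a homeomorphism onto a reduced scheme, and its ideal sheaf consists of functions vanishing at every point, hence nilpotent, hence zero in the reduced target. So it is an isomorphism. (Equivalently, apply Lemma~\ref{lem:reducedness} with $X=\mathfrak{M}_{\theta_0}$ and $Y=\Hilb^n(\mathbb{A}^2/\Gamma)_{\operatorname{red}}$, which is Noetherian and reduced.)

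Finally, normality comes from Lemma~\ref{lem:reduced}. Over $\CC$, symplectic singularities and the unique projective symplectic resolution $n\Gamma\text{-}\Hilb(\mathbb{A}^2)\to\mathfrak{M}_{\theta_0}$ come from Theorem~\ref{thm:Hilbnintro}. The only genuine obstacle is irreducibility of $\Hilb^n(\mathbb{A}^2/\Gamma)$. Citing Zheng is the intended route. If one wanted a self-contained proof, one would have to show that every length-$n$ subscheme supported at the singular point deforms to reduced ones, which is the hard part.
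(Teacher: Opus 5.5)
Your proposal is correct and follows the paper's proof. Irreducibility of $\Hilb^n(\mathbb{A}^2/\Gamma)$ (via Zheng) forces the good component, which Theorem~\ref{thm:Hilbnintro} identifies with $\mathfrak{M}_{\theta_0}$, to coincide with $\Hilb^n(\mathbb{A}^2/\Gamma)_{\operatorname{red}}$, and the remaining properties transfer from Theorem~\ref{thm:Hilbnintro} and Lemma~\ref{lem:reduced}; your use of Lemma~\ref{lem:reducedness} just spells out a step the paper leaves implicit, and the paper also records a Zheng-free route to irreducibility in the remark following Theorem~\ref{thm:CGGS21} (taking $J=\{0\}$ and $\eta=(-n,1)$).
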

 \begin{proof}
Zheng~\cite{Zheng23} proved that $\Hilb^n(\mathbb{A}^2/\Gamma)$ is irreducible\footnote{See Section~\ref{sec:CGGS21} for an alternative proof of irreducibility that is independent of the work of Zheng~\cite{Zheng23}.}, so the good component of  $\Hilb^{n}(\mathbb{A}^2/\Gamma)$ coincides with the underlying reduced subscheme. The result follows from Theorem~\ref{thm:Hilbnintro}.
\end{proof}

\begin{theorem}
 \label{thm:reduced}
 For $n\leq 7$, the scheme $\Hilb^n(\mathbb{A}^2/\Gamma)$ is actually reduced.
 \end{theorem}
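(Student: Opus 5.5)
Since $\Hilb^n(\mathbb{A}^2/\Gamma)_{\operatorname{red}}\cong\mathfrak{M}_{\theta_0}$ is a normal variety (Corollary~\ref{cor:irreducible}), it suffices to show that $\Hilb^n(\mathbb{A}^2/\Gamma)$ is generically reduced and satisfies Serre's condition $(S_1)$, i.e.\ has no embedded components. Generic reducedness is clear: over the dense open subset $U\subset\Sym^n(\mathbb{A}^2/\Gamma)$ of $n$ distinct points, $\Hilb^n(\mathbb{A}^2/\Gamma)$ is locally a product of $n$ copies of $\mathbb{A}^2/\Gamma$ near reduced subschemes, hence reduced of dimension $2n$. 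For $(S_1)$, I will instead aim for the stronger statement that $\Hilb^n(\mathbb{A}^2/\Gamma)$ is Cohen--Macaulay, or at least locally a complete intersection in something Cohen--Macaulay, of the expected dimension $2n$. An equidimensional Cohen--Macaulay scheme has no embedded points, and generically reduced plus $(S_1)$ gives reduced.

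To get a Cohen--Macaulay ambient space, I embed $\mathbb{A}^2/\Gamma\cong(f_\Gamma=0)\subset\mathbb{A}^3$, so that $\Hilb^n(\mathbb{A}^2/\Gamma)$ is a closed subscheme of $\Hilb^n(\mathbb{A}^3)$. By Hu~\cite{Hu25}, $\Hilb^n(\mathbb{A}^3)$ is Gorenstein, and in particular Cohen--Macaulay, for $n\le 7$. It is irreducible of dimension $3n$ in this range, since for small $n$ it coincides with its smoothable component. The subscheme $\Hilb^n(\mathbb{A}^2/\Gamma)$ is the locus where the element $f_\Gamma$ acts by zero on the universal family $\mathcal{Z}$. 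In other words, it is the zero scheme of the section of the rank-$n$ tautological bundle $\mathcal{O}^{[n]}=\pi_*\mathcal{O}_{\mathcal{Z}}$ induced by $f_\Gamma$: multiplication by $f_\Gamma$ sends the generator $1$ to $f_\Gamma\cdot 1\in\mathcal{O}_{\mathcal{Z}}$. Locally this cuts $\Hilb^n(\mathbb{A}^2/\Gamma)$ out by $n$ equations inside a Cohen--Macaulay scheme of dimension $3n$. By Zheng~\cite{Zheng23}, or alternatively by Corollary~\ref{cor:irreducible}, every component has dimension exactly $2n=3n-n$. Therefore these $n$ equations form a regular sequence locally, and $\Hilb^n(\mathbb{A}^2/\Gamma)$ is Cohen--Macaulay (indeed Gorenstein).

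The step needing most care is the scheme-theoretic identification of $\Hilb^n(\mathbb{A}^2/\Gamma)$ with the zero locus of this section. I will verify it at the level of functors of points. A $T$-flat family $Z\subset\mathbb{A}^3\times T$ lies in $(\mathbb{A}^2/\Gamma)\times T$ exactly when $f_\Gamma$ vanishes in $\mathcal{O}_Z$. Since $\mathcal{O}_Z$ is cyclic, generated by $1$, this happens exactly when the section $f_\Gamma\cdot 1$ of $\pi_*\mathcal{O}_Z$ vanishes.

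I also need Hu's result to hold in the form required: Cohen--Macaulayness at every point of $\Hilb^n(\mathbb{A}^3)$, including non-smoothable points if any exist for $n\le7$. I expect this to be the main obstacle. If Hu's theorem only covers the principal component, I would first check that $\Hilb^n(\mathbb{A}^3)$ is irreducible for $n\le 7$, which is known for $n\le 7$, and use that. The dimension count then relies only on the irreducibility and dimension result for $\Hilb^n(\mathbb{A}^2/\Gamma)$ established above.
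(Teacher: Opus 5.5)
Your proposal is correct and follows essentially the same route as the paper. The closed immersion $\Hilb^n(\mathbb{A}^2/\Gamma)\hookrightarrow\Hilb^n(\mathbb{A}^3)$ is cut out locally by the $n$ components of the section of the rank-$n$ tautological bundle induced by $f_\Gamma$; Hu's Cohen--Macaulayness, Mazzola's irreducibility of $\Hilb^n(\mathbb{A}^3)$ in dimension $3n$ for $n\le 7$ (which settles your concern about non-smoothable points), and the dimension $2n$ of $\Hilb^n(\mathbb{A}^2/\Gamma)$ make it a local complete intersection, hence $S_1$, and Serre's criterion with generic reducedness finishes.
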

 
 \begin{proof}
 Recent work of Hu~\cite[Theorem~1.6]{Hu25} shows that $\Hilb^n(\mathbb{A}^3)$ is Gorenstein for $n\le7$, and hence Cohen-Macaulay. The results now follows from Proposition~\ref{prop:CMreduced}.
 \end{proof}

\begin{proposition}
\label{prop:CMreduced}
 Let $n\geq 1$. If  $\Hilb^n(\mathbb{A}^3)$ is Cohen--Macaulay, then $\Hilb^n(\mathbb{A}^2/\Gamma)$ is reduced.
\end{proposition}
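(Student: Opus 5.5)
We realise $\Hilb^n(\mathbb{A}^2/\Gamma)$ as a hypersurface-type section inside $\Hilb^n(\mathbb{A}^3)$. Since $\mathbb{A}^2/\Gamma=(f_\Gamma=0)\subset\mathbb{A}^3$ by \eqref{eqn:ADEhypersurface}, the Hilbert scheme $\Hilb^n(\mathbb{A}^2/\Gamma)$ is the closed subscheme of $H:=\Hilb^n(\mathbb{A}^3)$ parametrising subschemes $Z$ contained in $(f_\Gamma=0)$. Concretely, let $\mathcal{O}_H\to\mathcal{Z}$ be the universal quotient, a locally free sheaf of rank $n$. Multiplication by $f_\Gamma$ gives an endomorphism $m_f$ of $\mathcal{Z}$. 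The condition $Z\subset (f_\Gamma=0)$ says exactly that $m_f$ kills the image of $1$, equivalently that $m_f=0$, since $\mathcal{Z}$ is cyclic. Locally this is the vanishing of the $n$ entries of the section $f_\Gamma\cdot 1\in\mathcal{Z}$. Hence $\Hilb^n(\mathbb{A}^2/\Gamma)$ is cut out of $H$ by $n$ equations locally. This matches the model of Example~\ref{exa:A_Jfor0}: commuting triples $(B_1,B_2,B_3)$ with a cyclic vector, plus the equation $f(B_1,B_2,B_3)=0$.

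The plan is then a dimension count. We may replace $H$ by its open subscheme $H'$ of points lying over $\Sym^n$ of the hypersurface neighbourhood. The closed subscheme $\Hilb^n(\mathbb{A}^2/\Gamma)$ is cut out of $H$ locally by $n$ equations, and it has dimension $2n$ by Zheng's irreducibility result, or by Corollary~\ref{cor:irreducible}.

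\emph{Main obstacle.} $H$ is reducible and of large dimension for $n$ large, so we cannot count against $\dim H=3n$ directly. Instead we argue locally at each point $x\in\Hilb^n(\mathbb{A}^2/\Gamma)$ using Cohen--Macaulayness. In a Cohen--Macaulay local ring $\mathcal{O}_{H,x}$ every component has the same dimension. Cutting by $n$ elements reduces the dimension by at most $n$, with equality if and only if they form a regular sequence. So we need $\dim\mathcal{O}_{H,x}=3n$ at every point $x$ of $\Hilb^n(\mathbb{A}^2/\Gamma)$. This holds whenever $H$ is irreducible, and that is true in the relevant range, since $\Hilb^n(\mathbb{A}^3)$ is irreducible for $n\le 11$; I would cite this. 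Granting it, the quotient $\mathcal{O}_{\Hilb^n(\mathbb{A}^2/\Gamma),x}$ is a complete intersection in a Cohen--Macaulay ring of dimension $3n-n=2n$, so it is Cohen--Macaulay. In general one phrases the hypothesis to include equidimensionality of dimension $3n$ near $\Hilb^n(\mathbb{A}^2/\Gamma)$, which is forced because $\Hilb^n(\mathbb{A}^2/\Gamma)$ has dimension exactly $2n$ and the drop is at most $n$.

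Finally, Cohen--Macaulay implies $S_1$, i.e.\ there are no embedded associated points. So reducedness follows from generic reducedness, by Serre's criterion $R_0+S_1$. The scheme is irreducible, and over the dense open $U$ of $n$ distinct points it is locally a product of $n$ copies of the reduced surface $\mathbb{A}^2/\Gamma$. More precisely, over points of $U$ avoiding the singular point it is smooth, since there the Hilbert scheme is \'etale-locally $(\mathbb{A}^2)^n$. Hence it is generically reduced, and therefore reduced.
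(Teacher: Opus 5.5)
Your proposal is correct and follows essentially the same route as the paper: you realise $\Hilb^n(\mathbb{A}^2/\Gamma)$ inside $\Hilb^n(\mathbb{A}^3)$ as the zero locus of the section $f_\Gamma\cdot 1$ of the rank-$n$ tautological bundle, use the dimension count $3n-n=2n$ to get a regular sequence in a Cohen--Macaulay ring (the paper invokes Mazzola's irreducibility for $n\leq 7$, where you cite the $n\leq 11$ result), and conclude by Serre's criterion from $S_1$ and generic smoothness. Your remark on arbitrary $n$ is also sound once you add the inequality $\dim\mathcal{O}_{H,x}\geq 3n$, which holds because every point of the irreducible $\Hilb^n(\mathbb{A}^2/\Gamma)$ lies on the $3n$-dimensional smoothable component; combined with the bound $\dim\mathcal{O}_{H,x}\leq 3n$ from Krull's height theorem, this makes the argument independent of the irreducibility of $\Hilb^n(\mathbb{A}^3)$ that the paper's own proof relies on.
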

 \begin{proof}
      The coordinate ring of $\mathbb{A}^2/\Gamma$ is isomorphic to $\kk[z_1, z_2, z_3]/(f_\Gamma)$ by \eqref{eqn:ADEhypersurface}, so there is a closed immersion $\iota\colon \mathbb{A}^2/\Gamma\hookrightarrow \mathbb{A}^3$ as a hypersurface. Let $\mathcal{Z}_\Gamma\subset \Hilb^n(\mathbb{A}^2/\Gamma)\times \mathbb{A}^2/\Gamma$ be the universal subscheme for $\Hilb^n(\mathbb{A}^2/\Gamma)$. Then $(\id_{\Hilb^n(\mathbb{A}^2/\Gamma)}\times \iota)_*\mathcal{O}_{\mathcal{Z}_\Gamma}$ gives a flat family of subschemes of $\mathbb{A}^3$ of length $n$ over $\Hilb^n(\mathbb{A}^2/\Gamma)$, and the universal morphism 
 \begin{equation}
     \label{eqn:Hilbnclosedimmersion}
\Hilb^n(\mathbb{A}^2/\Gamma)\longrightarrow \Hilb^n(\mathbb{A}^3)
 \end{equation}
 is a closed immersion by \cite[Tag 0B97]{stacks-project}. We claim that this closed immersion realises  $\Hilb^n(\mathbb{A}^2/\Gamma)$ as a local complete intersection in $\Hilb^n(\mathbb{A}^3)$. Since $\Hilb^n(\mathbb{A}^3)$ is Cohen--Macaulay by assumption, the claim implies that $\Hilb^n(\mathbb{A}^2/\Gamma)$ is also Cohen--Macaulay and hence $S_1$. The result follows by Serre's criteria because  $\Hilb^n(\mathbb{A}^2/\Gamma)$ is generically smooth and hence $R_0$.
 
 To prove the claim, let $ \mathcal{Z}\subseteq \Hilb^n(\mathbb{A}^3)\times \mathbb{A}^3$ be the universal subscheme for $\Hilb^n(\mathbb{A}^3)$ and write $p\colon \mathcal{Z}\to \Hilb^n(\mathbb{A}^3)$ and $q\colon \mathcal{Z}\to \mathbb{A}^3$ for the first and second projections.  Since $p$ is finite and flat of degree $n$, the sheaf $p_* \mathcal{O}_{\mathcal{Z}}$ is locally-free of rank $n$ on $\Hilb^n(\mathbb{A}^3)$. The idea is to show that the image of \eqref{eqn:Hilbnclosedimmersion} is cut out locally by the section $p_*(q^*(f_\Gamma))$ of the locally-free sheaf $p_*\mathcal{O}_{\mathcal{Z}}$ of rank $n$ on $\Hilb^n(\mathbb{A}^3)$, so it's defined locally by $n$ equations. More formally, choose a cover by open affines $T=\Spec(R)\subseteq \Hilb^n(\mathbb{A}^3)$ on which $p_*\mathcal{O}_{\mathcal{Z}}$ can be trivialised via $R$-module isomorphisms
  \[
  \varphi_T\colon \Gamma\big(T,p_*\mathcal{O}_{\mathcal{Z}}\big)\longrightarrow R^{\oplus n}.
  \]
  Adopting the notation from the proof of \cite[Tag 0B97]{stacks-project}, define $Z'$ via the fibre square
   \begin{equation}
\label{eqn:lciHilb}
\begin{tikzcd}
   Z'\ar[d]\ar[r] & p^{-1}(T)\ar[d,"{(p,q)}"] \\
  T\times \mathbb{A}^2/\Gamma \ar[r,"{\id\times \iota}"] & T\times \mathbb{A}^3.
  \end{tikzcd}
  \end{equation}
  If $g\in B:= \Gamma(T,p_*\mathcal{O}_{\mathcal{Z}})$ denotes the restriction of $q^*(f_\Gamma)$ to $p^{-1}(T)$, then $Z'\cong \Spec(B/\langle g\rangle)$, and furthermore, the resulting $n$-tuple of functions $\varphi_T(g)=(g_1, \dots, g_n)$ cuts out the closed subscheme $\Spec(R/\langle g_1, \dots, g_n\rangle) \subseteq T$ which defines the intersection of $T$ with the image of the morphism \eqref{eqn:Hilbnclosedimmersion}. In particular, $\Hilb^n(\mathbb{A}^2/\Gamma)$ is locally defined by $n$ functions. The scheme $\Hilb^n(\mathbb{A}^2/\Gamma)$ is irreducible of dimension $2n$ (see Corollary~\ref{cor:irreducible} above), and for $n\le7$,  $\Hilb^n(\mathbb{A}^3)$ is irreducible of dimension $3n$ by Mazzola~\cite{Mazzola80}, so $\Hilb^n(\mathbb{A}^2/\Gamma)$ is a local complete intersection in $\Hilb^n(\mathbb{A}^3)$ as claimed. 
 \end{proof}

 \subsection{A universal morphism}
 \label{sec:CGGS21}
 We now describe a situation where the map $\varphi_\eta$ from Theorem~\ref{thm:closedimmersion} is universal. This section is not required for the proofs of Theorems~\ref{thm:HilbnSintro}, \ref{thm:nleq7intro}, \ref{thm:Hilbnintro} or Corollary~\ref{cor:Beauville}.
 
 In this section only, let $\eta\in \Theta_{v_J}$ satisfy $\eta_j>0$ for \emph{all} $j\in J$, hence $\eta_\infty<0$. Every such stability condition $\eta$ is generic, and since $v_J$ is indivisible, it follows from King~\cite{King94} that $\Rep(A_J,v_J)\git_\eta \GL(v_J)$ is the fine moduli space of $\eta$-stable $A_J$-modules of dimension vector $v_J$ up to isomorphism. The corresponding stability condition $\overline{\eta}\in \Theta_v$ lies in the closure of the GIT chamber
 \[
 C_+:=\{\theta\in \Theta_v \mid \theta_i>0 \text{ for all }0\leq i\leq r\}
 \]
 that is contained in the cone $F$ from \eqref{eqn:F}, where $\mathfrak{M}_{\theta}\cong n\Gamma\text{-}\Hilb(\mathbb{A}^2)$
  for $\theta\in C_+$. In particular, variation of GIT quotient defines a projective, symplectic resolution of singularities $\pi_J\colon \mathfrak{M}_\theta\rightarrow \mathfrak{M}_{\overline{\eta}}$.
  
  \begin{theorem}
  \label{thm:CGGS21}
   For non-empty $J\subseteq \{0,1,\dots, r\}$, assume that $\eta\in \Theta_{v_J}$ satisfies $\eta_j>0$ for all $j\in J$. Then for any $\theta\in C_+$, there is a commutative diagram    \begin{equation}
\label{eqn:CGGS1}
\begin{tikzcd}
\mathfrak{M}_\theta\ar[d,swap,"\pi_J"]\ar[dr,"\tau_J"] & \\
 \mathfrak{M}_{\overline{\eta}} \ar[r,"\varphi_\eta"] & \Rep(A_J,v_J)\git_\eta \GL(v_J) 
  \end{tikzcd}
  \end{equation}
   where $\tau_J$ is a universal morphism to the fine moduli space and $\varphi_\eta$ induces an isomorphism of the underlying reduced schemes. In particular, $\Rep(A_J,v_J)\git_\eta \GL(v_J)$ is irreducible and the underlying reduced scheme is normal. In addition, over the ground field $\mathbb{C}$, it has symplectic singularities.
  \end{theorem}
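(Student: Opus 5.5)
The plan has four steps: build the triangle \eqref{eqn:CGGS1} from variation of GIT quotient and Theorem~\ref{thm:closedimmersion}, prove that $\tau_J$ is surjective on closed points, deduce that $\varphi_\eta$ is an isomorphism on reduced schemes, and read off the geometric consequences from Lemma~\ref{lem:reduced}.

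\emph{Step 1: the triangle.} Since $\eta_j>0$ for all $j\in J$ and $\eta_\infty<0$, the hypotheses of Theorem~\ref{thm:closedimmersion} hold. So we have the closed immersion $\varphi_\eta\colon \mathfrak{M}_{\overline{\eta}}\to \Rep(A_J,v_J)\git_\eta \GL(v_J)$. Since $\overline{\eta}\in \overline{C_+}$, variation of GIT quotient gives the projective morphism $\pi_J\colon \mathfrak{M}_\theta\to \mathfrak{M}_{\overline{\eta}}$, which is surjective by \cite[Lemma~2.2]{CGGS21}. Define $\tau_J:=\varphi_\eta\circ\pi_J$; commutativity of \eqref{eqn:CGGS1} is then automatic. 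To see that $\tau_J$ is the universal morphism, I will use the tautological bundle $\mathcal{R}$ on the fine moduli space $\mathfrak{M}_\theta$. By Proposition~\ref{prop:AtoPi}, $b^*$ acts by zero on $\mathcal{R}$, so $\mathcal{R}$ is a family of $A$-modules. Cornering, $\mathcal{R}\mapsto (e_\infty+e_J)\mathcal{R}$, gives a family of $A_J$-modules of dimension vector $v_J$. I must check this family is $\eta$-stable. If $N\subseteq (e_\infty+e_J)M$ is an $A_J$-submodule, then $AN\subseteq M$ is an $A$-submodule with $(e_\infty+e_J)AN=N$. Since $\theta$-stable modules for $\theta\in C_+$ are generated at $\infty$, a submodule $N$ containing the $\infty$-part forces $AN=M$, hence $N=(e_\infty+e_J)M$. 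The only destabilising candidates are therefore submodules supported on $J$, and these have positive $\eta$, so they do not destabilise. The classifying map of this family agrees with $\varphi_\eta\circ\pi_J$ on closed points, and since both morphisms come from the same equivariant map $\Rep(A,v)^\theta\to\Rep(A_J,v_J)^\eta$ (Remark~\ref{rem:semistable}), they agree as morphisms.

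\emph{Step 2: surjectivity of $\tau_J$ on closed points.} This is the main obstacle. Let $N$ be an $\eta$-stable $A_J$-module of dimension vector $v_J$. I would take $M:=Ae_{\infty}\otimes_{A_J}$-type induction, concretely $M:=(A(e_\infty+e_J))\otimes_{A_J}N$, and then pass to the quotient by the largest submodule $M'$ with $(e_\infty+e_J)M'=0$. The resulting $A$-module is generated by its $\infty$-part and has $(e_\infty+e_J)$-part equal to $N$. The crux is showing it has dimension vector exactly $v$. My first attempt would be to run the argument over the dense open locus $U$ of $n$ distinct points, where $\mathfrak{M}_\theta$ and the $J$-moduli agree. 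Then I would use properness: the image of the projective $\tau_J$ is closed and contains the preimage of $U$. It remains to show that the preimage of $U$ is dense in $\Rep(A_J,v_J)\git_\eta\GL(v_J)$, and this is the delicate point. I would approach it via the flat family of cyclic modules over $\Sym^n$ as in CGGS, using the alternative irreducibility argument that this section advertises.

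\emph{Step 3: isomorphism on reduced schemes.} Once surjectivity holds, $\varphi_\eta$ is a surjective closed immersion, and hence an isomorphism on underlying reduced schemes.

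\emph{Step 4: geometric consequences.} Irreducibility and normality of the reduced scheme then follow from Lemma~\ref{lem:reduced} applied to $\mathfrak{M}_{\overline{\eta}}$. Over $\mathbb{C}$, the same lemma gives symplectic singularities.
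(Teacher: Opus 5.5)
Your Steps 1, 3 and 4 are fine. In Step 1 you check that the cornered tautological family is $\eta$-stable and that both maps $\mathfrak{M}_\theta\to\Rep(A_J,v_J)\git_\eta\GL(v_J)$ are induced by the same equivariant map on $\Rep(A,v)^\theta$. That is a legitimate substitute for the paper's comparison $\tau_J^*(T_i)\cong\mathcal{R}_i$ via \cite[Lemma~4.1]{CGGS21}.

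\textbf{The gap is Step~2.} It carries the real content of the theorem, and neither of your attempts closes it.

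\emph{The quotient construction fails.} The module $\overline{M}:=M/M'$, with $M=A(e_\infty+e_J)\otimes_{A_J}N$, does not have dimension vector $v$ in general, so the ``crux'' you isolate is false. Take $J=\{0\}$, $n=1$, $\Gamma$ nontrivial, and $N=\kk u\oplus\kk w$ ($u$ at $\infty$, $w=bu$) the $\eta$-stable module over the singular point. Then every positive-length cycle at $0$ kills $w$. Let $a$ be an arrow of $\McKay$ with tail $0$. Every path in $Q^*$ from $\head(a)$ into $\{\infty,0\}$ ends at $0$, and composing it with $a$ gives a positive-length cycle at $0$. Hence $A\cdot aw$ has zero $(e_\infty+e_J)$-part and dies in $\overline{M}$. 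So $\overline{M}=\kk u\oplus\kk w$ has dimension vector $\rho_\infty+(1,0,\dots,0)\neq v$.

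\emph{The fallback is circular.} Showing that the locus over $U$ is dense in $\Rep(A_J,v_J)\git_\eta\GL(v_J)$ is no easier than the irreducibility the theorem asserts as a consequence. Moreover, the ``alternative irreducibility argument'' advertised in this section is this very theorem.

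\textbf{The missing idea.} Closed points of $\mathfrak{M}_{\overline{\eta}}$ are S-equivalence classes. For $k\in K$, the vertex simple $S_k$ is $\overline{\eta}$-semistable of weight zero and satisfies $(e_\infty+e_J)S_k=0$. So you do not need a module of dimension exactly $v$; you only need
\[
\dim e_k\overline{M}\leq n\delta_k \quad\text{for all } k\in K.
\]
Given this inequality, $\overline{M}\oplus\bigoplus_{k\in K}S_k^{\oplus(n\delta_k-\dim e_k\overline{M})}$ is an $\overline{\eta}$-semistable $A$-module of dimension vector $v$ whose image under $\varphi_\eta$ is $[N]$. Semistability of $\overline{M}$ holds because $\overline{\eta}(\dim L)=\eta(\dim (e_\infty+e_J)L)\geq 0$ for every submodule $L$. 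In the example above, you pad with $S_k^{\oplus\delta_k}$.

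\textbf{The inequality still needs proof.} When $0\in J$ one argument is as follows.
\begin{enumerate}
\item $\overline{M}$ is generated at $\infty$ and has no nonzero submodule supported on $K$, so $\Hom(\overline{M},S_k)=0=\Hom(S_k,\overline{M})$.
\item Crawley-Boevey's formula $\dim\operatorname{Ext}^1_\Pi(X,Y)=\dim\Hom(X,Y)+\dim\Hom(Y,X)-(\dim X,\dim Y)$, with $X=\overline{M}$ and $Y=S_k$, then gives $(\dim\overline{M},\dim S_k)\leq 0$.
\item Since $\rho_\infty$ and $\delta$ pair to zero with $\dim S_k$ for $k\in K$, the vector $\kappa:=\dim e_K\overline{M}-n\delta_K$ satisfies $C_K\kappa\leq 0$, where $C_K$ is the finite-type Cartan matrix of $K$.
\item Hence $\kappa\leq 0$, because $C_K^{-1}$ has nonnegative entries.
\end{enumerate}
For $0\notin J$ the inequality at vertex $0$ weakens to $(C_K\kappa)_0\leq 1$, and a further argument is needed.

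The paper does not reprove this step either. It reduces the theorem to surjectivity of $\varphi_\eta$ on closed points and cites the final paragraph of the proof of \cite[Theorem~4.8]{CGGS21}.
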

  \begin{proof}
Let $K$ satisfy \eqref{eqn:partition}. Combining the quotient map for the $G_K$-action on $\Rep(A,v)$ with the $\GL(v_J)$-equivariant morphism from Lemma~\ref{lem:epi} gives a commutative diagram of affine schemes
\begin{equation}
\label{eqn:affinecommutative}
\begin{tikzcd}
 \Rep(A,v) \ar[d,swap,"\pi"]\ar[dr,"\tau"] & \\
 \Rep(A,v)\git G_K \ar[r,"\varphi"] & \Rep(A_J,v_J) 
  \end{tikzcd}
  \end{equation}
  for $\tau:=\varphi\circ \pi$. The map $\pi$ is equivariant with respect to actions of $\GL(v)$ and $\GL(v_J)\cong \GL(v)/G_K$ on $\Rep(A,v)$ and $\Rep(A,v)\git G_K$ respectively, so $\tau$ is also equivariant with respect to the actions of $\GL(v)$ and $\GL(v_J)$. The open immersion  of semistable loci $\Rep(A,v)^{\theta}\to \Rep(A,v)^{\overline{\eta}}$ induces a VGIT morphism because  $\overline{\eta}$ lies in the closure of the GIT chamber $C_+$ containing $\theta$, and this gives the VGIT morphism $\pi_J$ by Proposition~\ref{prop:AtoPi} because $\theta, \overline{\eta}\in F$. Furthermore, the morphism $\Rep(A,v)^{\overline{\eta}}\to (\Rep(A,v)\git G_K)^\eta$ from Remark~\ref{rem:semistable} obtained by restricting $\varphi$ to the $\overline{\eta}$-semistable locus induces the morphism $\varphi_\eta$ after passing to the quotients by $\GL(v_J)$. Thus, we obtain  diagram \eqref{eqn:CGGS1} from \eqref{eqn:affinecommutative} by setting $\tau_J:= \varphi_\eta\circ \pi_J$.

Now let $T_J:= \bigoplus_{i\in \{\infty\}\cup J\}} T_i$ denote the tautological locally-free sheaf on the fine moduli space $\Rep(A_J,v_J)\git_\eta ~\GL(v_J)$, where $T_\infty$ is the trivial line bundle and $T_j$ has rank $n\dim(\rho_j)$ for $j\in J$. To see that $\tau_J$ is universal, it suffices by \cite[Lemma~4.1]{CGGS21} to show that 
\[
\tau_J^*(T_i)\cong \mathcal{R}_i\quad \text{for all }i\in \{\infty\}\cup J. 
\]
Clearly $\tau_J^*(T_\infty)\cong \mathcal{O}_{\mathfrak{M}_\theta}\cong \mathcal{R}_\infty$. For $j\in J$, the bundle
$T_j$ on $\Rep(A_J,v_J)\git_\eta \GL(v_J)$ is obtained by descent from the $\GL(v_J)$-equivariant vector bundle $\varrho_j\vert_{\GL(v_J)}\otimes \mathcal{O}_{\Rep(A_J,v_J)^{\eta}}$ on the $\eta$-stable locus in $\Rep(A_J,v_J)$. Since $\varphi$ is a $\GL(v_J)$-equivariant closed immersion, the bundle
\begin{equation}
\label{eqn:varrhotensor} \varphi^*\big(\varrho_j\vert_{\GL(v_J)}\otimes \mathcal{O}_{\Rep(A_J,v_J)^{\eta}}\big)
=
\varrho_j\vert_{\GL(v_J)}\otimes \mathcal{O}_{(\Rep(A,v)\git G_K)^{\eta}}
\end{equation}
 descends to $\varphi_\eta^*(T_j)$, the restriction of $T_j$ to the closed subvariety $\mathfrak{M}_{\overline{\eta}}$ of $\Rep(A_J,v_J)\git\eta \GL(v_J)$. The $\GL(v)$-equivariant sheaf $\varrho_j\otimes \mathcal{O}_{\Rep(A,v)}$ descends via $\pi$ to $\varrho_J\vert_{\GL(v_J)}\otimes \mathcal{O}_{\Rep(A,v)\git G_K}$ because $G_K$ acts trivially on each fibre. Therefore, pulling back the sheaf \eqref{eqn:varrhotensor} along the restriction of $\pi$ to the $\overline{\eta}$-semistable locus following Remark~\ref{rem:semistable} shows that 
 \begin{equation}
\label{eqn:varrhoagain}  (\varphi\circ \pi)^*\big(\varrho_j\vert_{\GL(v_J)}\otimes \mathcal{O}_{\Rep(A_J,v_J)^{\eta}}\big)
=
\varrho_j\otimes \mathcal{O}_{\Rep(A,v)^{\overline{\eta}}}
\end{equation}
 Finally, we need only pullback along the open immersion $\Rep(A,v)^{\theta}\to \Rep(A,v)^{\overline{\eta}}$ to see that the bundle $(\varphi_\eta\circ \pi_J)^*(T_j)$ on $\mathfrak{M}_\theta$ is obtained by descent from the $\GL(v)$-equivariant bundle $\varrho_j\otimes \mathcal{O}_{\Rep(A,v)^\theta}$ on the $\theta$-stable locus in $\Rep(A,v)$. Therefore $(\varphi_\eta\circ \pi_J)^*(T_j)\cong \mathcal{R}_j$ as required.

 Since $\mathfrak{M}_{\overline{\eta}}$ is irreducible, normal and has symplectic singularities over $\mathbb{C}$, it  remains to show that the closed immersion $\varphi_\eta$ is surjective on closed points. The proof of this assertion runs exactly as in the final paragraph in the proof of \cite[Theorem~4.8]{CGGS21}.
\end{proof}

\begin{remark}
 The statement of Theorem~\ref{thm:CGGS21} has appeared before:
 \begin{enumerate}
     \item It appeared in work of Craw, Gyenge, Gammelgaard and Szendr\H{o}i~\cite[Theorem~4.8]{CGGS21}, but Yehao Zhou kindly pointed out a gap in the proof of \cite[Lemma~4.3]{CGGS21} on which the original proof of \cite[Theorem~4.8]{CGGS21} relied. For $J=\{0\}$ and $\eta=(-n,1)$, Propositions~\ref{prop:ReptoHilb} and Theorem~\ref{thm:CGGS21}  show that $\Hilb^n(\mathbb{A}^2/\Gamma)$ is irreducible. This provides a complete, self-contained proof of the main result \cite[Theorem~1.1]{CGGS21} that is independent of the work by  Zheng~\cite{Zheng23}.
 \item For any non-empty $J\subseteq \{0,1,\dots, r\}$, \cite[Proposition~4.3]{CGGS2} shows that the fine moduli space $\Rep(A_J,v_J)\git_\eta \GL(v_J)$ from Theorem~\ref{thm:CGGS21} is isomorphic to the \emph{orbifold Quot scheme} for the stack $[\mathbb{A}^2/\Gamma]$, denoted $\Quot^{v_J}_J\big([\mathbb{A}^2/\Gamma]\big)$. The same gap appears in \cite[Lemma~5.5]{CGGS2}, but again, Theorem~\ref{thm:CGGS21} bypasses this to show that $\Quot^{v_J}_J\big([\mathbb{A}^2/\Gamma]\big)$ is irreducible, and its underlying reduced subscheme is normal. Similar  results hold for orbifold Quot schemes associated to dimension vectors other than $v_J$, see \cite{BG25,Craw25}.
 \end{enumerate}
  \end{remark}

      \section{On quasi-projective surfaces with canonical singularities}
        \label{sec:6}

 We conclude by studying the \'{e}tale-local structure of Hilbert schemes of points on affine schemes of finite type over $\kk$ in order to prove Theorems~\ref{thm:HilbnSintro}-\ref{thm:nleq7intro} and Corollary~\ref{cor:Beauville}. In passing, we present a self-contained proof of the result by Fogarty~\cite{Fogarty68} that $\Hilb^n(S)$ is nonsingular for a non-singular quasi-projective surface $S$.
 
\subsection{Formal neighbourhoods of points on punctual Hilbert schemes}
Let $A$ be a finitely generated commutative $\kk$-algebra. For $S:=\Spec A$ and $n\geq 1$, we write $S^{[n]}:=\Hilb^{n}(S)$ for the Hilbert scheme of $n$ points on $S$. Let $U\subset S\times S^{[n]}$ denote the universal subscheme. For each closed point $q\in S^{[n]}$, the fibre of $q$ under the natural morphism $U\to S^{[n]}$ is a \emph{cluster} in $S$, that is, a 0-dimensional closed subscheme $Z\subset S$ of length $n$. For each cluster $Z\subset S$, we write $[Z]\in S^{[n]}$ for the corresponding closed point. 

For any closed point $p\in S$ corresponding to a maximal ideal $\mathfrak{m}\subset A$, we denote the affine scheme $\Spec\widehat{A}_\mathfrak{m}$ by $\widehat{S}_p$, where $\widehat{A}_\mathfrak{m}$ is the completion of $A$ at $\mathfrak{m}$.

\begin{lemma}
\label{lem:localtoglobal}
 For $[Z]\in S^{[n]}$, write $Z =\bigsqcup_{1\leq k\leq \ell} Z_k$ where $p_k:=\supp(Z_k)$ and $m_k:= \dim H^0(\mathcal{O}_{Z_k})$. There is a morphism of schemes
 \[
  g\colon\prod_{1\leq k\leq \ell} (\widehat{S}_{p_k})^{[m_k]}\longrightarrow  S^{[n]}.
  \]
\end{lemma}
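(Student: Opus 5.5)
To construct $g$ I will use the functor of points. The plan is to produce a flat, finite family of length-$n$ subschemes of $S$ over $T:=\prod_k (\widehat{S}_{p_k})^{[m_k]}$; the universal property of $S^{[n]}$ then supplies $g$. Here $(\widehat{S}_{p_k})^{[m_k]}$ is the Hilbert scheme of $m_k$ points on the affine scheme $\Spec\widehat{A}_{\mathfrak{m}_k}$, i.e.\ the functor of quotients $\widehat{A}_{\mathfrak{m}_k}\otimes R\to M$ that are locally free of rank $m_k$ over $R$. We can either take it as given that this is represented by a scheme, or build it concretely as the closed subscheme of $\Hilb^{m_k}(S)$ of clusters supported at $p_k$ together with its thickening. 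Let $U_k\subset \widehat{S}_{p_k}\times (\widehat{S}_{p_k})^{[m_k]}$ denote the universal family, and pull each $U_k$ back to $T$.

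\textbf{Step 1 (each piece lies in $S\times T$).} The ring map $A\to \widehat{A}_{\mathfrak{m}_k}$ gives a morphism $\widehat{S}_{p_k}\to S$. Composing, we obtain $U_k\times_{(\widehat{S}_{p_k})^{[m_k]}} T\to S\times T$. I will check that this is a closed immersion. Locally on $T=\Spec R$ the family is $\Spec M$ with $M$ a quotient of $\widehat{A}_{\mathfrak{m}_k}\otimes R$, locally free of rank $m_k$. The key point is that the composite $A\otimes R\to M$ is surjective. Since $M$ is a finite $R$-module on which $\mathfrak{m}_k$ acts topologically nilpotently, some power $\mathfrak{m}_k^N$ annihilates $M$. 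This needs care: for a family over a possibly non-reduced, non-Noetherian base $R$, the nilpotency has to be extracted from the finiteness of $M$ and Nakayama's lemma, or from Cayley--Hamilton applied to the action of generators of $\mathfrak{m}_k$ on $M$. Once $\mathfrak{m}_k^N M=0$, the map factors through $\widehat{A}_{\mathfrak{m}_k}/\mathfrak{m}_k^N\otimes R = A/\mathfrak{m}_k^N\otimes R$, which is a quotient of $A\otimes R$. So $A\otimes R\to M$ is surjective, and I obtain a closed subscheme $W_k\subset S\times T$ that is finite and flat of degree $m_k$ over $T$ and set-theoretically supported in $\{p_k\}\times T$.

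\textbf{Step 2 (disjoint union).} Since the points $p_k$ are distinct, the ideals $\mathfrak{m}_k^N$ are pairwise comaximal. By the Chinese remainder theorem, $A\otimes R\to \prod_k (A/\mathfrak{m}_k^N\otimes R)\to\prod_k M_k$ is surjective. Hence $W:=\bigsqcup_k W_k$ is a closed subscheme of $S\times T$. It is finite and flat over $T$ of degree $\sum_k m_k=n$, because it is locally free of rank $n$. These local constructions glue because they are canonical, so $W$ defines a $T$-point of $S^{[n]}$, namely the morphism $g$. As a consistency check, the closed point $\prod_k[Z_k]$ maps to $[Z]$.

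\textbf{Main obstacle.} The delicate step is the uniform nilpotency of $\mathfrak{m}_k$ on $M$ in Step 1 over an arbitrary base ring $R$; I would prove it by Cayley--Hamilton, or by working on a fixed thickening.
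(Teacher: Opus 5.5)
You follow the same route as the paper: its proof is the one-line assertion that the union of the universal families over $T=\prod_k(\widehat S_{p_k})^{[m_k]}$ is a flat family of length-$n$ clusters in $S$. Your Step~1 tries to justify exactly that assertion, and the justification fails.

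The premise that $\mathfrak m_k$ acts (topologically) nilpotently on $M$, so that $\mathfrak m_k^N M=0$, is false for general $R$-points. The scheme $(\widehat S_{p_k})^{[m_k]}$ parametrises \emph{all} finite quotients of $\widehat A_{\mathfrak m_k}\otimes_\kk R$, not only those supported at the closed point. Already for $m_k=1$ we have $(\widehat S_{p_k})^{[1]}=\widehat S_{p_k}$. Take the tautological point over $R=\widehat A_{\mathfrak m_k}$, given by multiplication $\widehat A_{\mathfrak m_k}\otimes_\kk R\to R$. It has $M=R$ and $\mathfrak m_k^N M\neq 0$ for every $N$. Moreover, a generic point of $\widehat S_{p_k}$ lies over a point of $S$ other than $p_k$, so $W_k$ is not set-theoretically contained in $\{p_k\}\times T$. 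The comaximality argument in Step~2 rests on the same false premise. Cayley--Hamilton cannot repair this: it makes $a\in\mathfrak m_k$ nilpotent on $M$ only when the coefficients of its characteristic polynomial are nilpotent in $R$. That happens exactly when every fibre of $M$ is already supported at $p_k$.

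Worse, for $m_k\geq 2$ the surjectivity of $A\otimes_\kk R\to M$ can genuinely fail. Take $S=\mathbb{A}^2$, $A=\kk[x,y]$ and $p=0$. Let $K$ be an algebraic closure of $\mathrm{Frac}\,\kk[[x,y]]$, and let $\sigma$ be an automorphism of $K$ fixing $\kk(x,y)$ with $\sigma(e^{x})=e^{x}+1$; it exists because $e^x$ is transcendental over $\kk(x,y)$. Let $\psi_1\colon \kk[[x,y]]\hookrightarrow K$ be the inclusion and $\psi_2=\sigma\circ\psi_1$. Together they give a surjection $\kk[[x,y]]\otimes_\kk K\to K\times K$, since its image contains $(1,1)$ and $(e^x,e^x+1)$. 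This is a $K$-point of $(\widehat S_p)^{[2]}$. But $\psi_1$ and $\psi_2$ agree on $\kk[x,y]$, so the image of $\kk[x,y]\otimes_\kk K$ is only the diagonal copy of $K$. Hence the pushed-forward universal family is not a closed subscheme of $S\times T$. Your Steps~1--2, like the paper's one-line argument, therefore do not produce a $T$-point of $S^{[n]}$.

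What your argument does prove is a restricted statement. Consider bases $R$ with nilpotent nilradical all of whose fibres are supported at the points $p_k$ --- for instance the punctual loci $\prod_k X_k$, their infinitesimal thickenings, or Artinian local bases. Over these, Cayley--Hamilton gives $\mathfrak m_k^N M=0$, and your Steps~1 and~2 then go through verbatim. This is essentially the ``punctual locus plus thickening'' model you mention as an alternative. It is also all that is used later, since $g$ only enters Proposition~\ref{prop:formalnbd} after completing along $\prod_k X_k$. You should state and prove the lemma in that form.
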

\begin{proof}
The union of the universal subschemes for the flat families over $(\widehat{S}_{p_k})^{[m_k]}$ for $1\leq k\leq \ell$ is a flat family of clusters in $S$ of length $n$, so the universal property of $S^{[n]}$ defines the morphism.
\end{proof}

 Let $S^{(n)}:=\Sym^n(S)$ denote the $n$-th symmetric product of $S$. The \emph{Hilbert-Chow morphism} \[
\pi\colon S^{[n]}\longrightarrow S^{(n)}
\]
 maps each cluster $Z$ in $S$ to the corresponding cycle $\pi(Z)=\sum_{p\in \supp(Z)} m_p p$, where $\supp(Z)$ is the support of $Z$ and where the multiplicities $m_p=\dim H^0(\mathcal{O}_{Z,p})$ satisfy $\sum_{p\in \supp(Z)} m_p=n$. Observe that the morphism from Lemma~\ref{lem:localtoglobal} depends only on the cycle $\pi(Z)$ associated to $Z$.

 Once and for all, let $\sigma:=\sum_{1\leq k\leq \ell} m_kp_k\in S^{(n)}$ be a cycle for which $p_1, \dots, p_\ell$ are distinct, and let $X_\sigma=\pi^{-1}(\sigma)$ in $S^{[n]}$ denote the scheme-theoretic fibre of the Hilbert--Chow morphism; this is the \emph{local punctual Hilbert scheme} \cite[Section~2.3.1]{Bertin08}. Similarly, for each $1\leq k\leq \ell$, write 
\[
\pi_k\colon S^{[m_k]}\longrightarrow  S^{(m_k)}
\]
 for the Hilbert--Chow morphism and define $X_{k}:=(\pi_{k})^{-1}(m_kp_k)$ to be the scheme-theoretic fibre. 
 
\begin{lemma}
\label{lem:localpunctual}
 There is an isomorphism $X_\sigma\cong \prod_{1\leq k\leq \ell} X_k$.
\end{lemma}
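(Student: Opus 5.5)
We compare functors of points: for a $\kk$-scheme $T$, a $T$-point of $X_\sigma$ and a $T$-point of $\prod_k X_k$ should correspond by taking disjoint unions in one direction and decomposing by support in the other.

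\textbf{Product to fibre.} Lemma~\ref{lem:localtoglobal} gives a union-of-families map $\prod_k S^{[m_k]}\supseteq U\to S^{[n]}$ on the open locus $U$ where the supports are pairwise disjoint. Since the $p_k$ are distinct, $\prod_k X_k$ lies in this locus. Choose pairwise disjoint open affine neighbourhoods $V_k\ni p_k$ in $S$. A family over $T$ with each $Z_k\subset V_k\times T$ has union $Z=\bigsqcup_k Z_k$, which is flat, finite of degree $n$. The Hilbert--Chow morphism is compatible with this union map, in the sense that $\pi(\bigsqcup Z_k)=\sum_k \pi_k(Z_k)$ via the addition map $\prod_k S^{(m_k)}\to S^{(n)}$. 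This is a statement about norms of functions, which are multiplicative over disjoint unions. Hence the union map sends $\prod_k X_k$ into $X_\sigma$.

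\textbf{Fibre to product.} Suppose $Z\subset S\times T$ is a $T$-family with $\pi(Z)=\sigma$ scheme-theoretically. Then set-theoretically the support of $Z$ lies in $\{p_1,\dots,p_\ell\}\times T$. Hence $Z=\bigsqcup_k Z_k$ with $Z_k:=Z\cap (V_k\times T)$, and each $Z_k$ is open and closed in $Z$, so it is finite and flat over $T$. Its degree $m_k'$ is locally constant. Over the closed points of $T$ the cycle is $\sigma$, so $m_k'=m_k$.

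\textbf{Main obstacle.} The hard step is showing that each $Z_k$ lies in the scheme-theoretic fibre $X_k$, not merely set-theoretically over $m_kp_k$. Here is how I would handle it. The addition map $\prod_k V_k^{(m_k)}\to S^{(n)}$ restricts to an open immersion, indeed an étale map that is injective near the point $(m_kp_k)_k$, onto a neighbourhood of $\sigma$. This holds because symmetric products of disjoint opens split: $(\bigsqcup V_k)^{(n)}\supseteq \prod_k V_k^{(m_k)}$ as an open and closed component. I would check this on invariant rings, using $\kk[V^n]^{S_n}$ restricted to the component with multiplicities $m_k$, which equals $\bigotimes_k \kk[V_k^{m_k}]^{S_{m_k}}$.

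The scheme-theoretic preimage of the point $\sigma$ under this map is then exactly the point $(m_kp_k)_k$. Since $\pi(Z)=\sum_k\pi_k(Z_k)$, the condition that $\pi(Z)$ factors through $\sigma$ is equivalent to each $\pi_k(Z_k)$ factoring through $m_kp_k$.

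The two constructions are visibly mutually inverse on $T$-points, so Yoneda gives $X_\sigma\cong\prod_k X_k$.
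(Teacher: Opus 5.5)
Your overall route is the paper's. In one direction you split a family by support, in the other you take disjoint unions, and then you invoke universal properties. You are more careful than the paper in one respect: its proof applies the universal property of the $X_k$ without checking that the pieces $U_k$ satisfy the \emph{scheme-theoretic} fibre condition. You correctly isolate this as the main obstacle, and you correctly reduce it to a statement about the addition map $\prod_k S^{(m_k)}\to S^{(n)}$ near $(m_kp_k)_k$, via the compatibility $\pi(\bigsqcup_k Z_k)=\sum_k\pi_k(Z_k)$.

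However, your justification of that statement fails as written. In the Zariski topology you cannot choose pairwise disjoint open neighbourhoods $V_k\ni p_k$ once two of the $p_k$ lie on a common irreducible component of $S$, because two non-empty opens of an irreducible space always meet. This is exactly the case of interest, for example $S$ irreducible. So $\bigsqcup_k V_k$ is not an open subscheme of $S$. Your invariant-ring computation is valid only for an abstract disjoint union, which maps to $S$ \'etale but not openly.

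Moreover, the claimed open immersion onto a neighbourhood of $\sigma$ is false. For $S$ irreducible and $\ell\geq 2$, the addition map is the quotient map $S^n/H\to S^n/\mathfrak{S}_n$ for the Young subgroup $H:=\prod_k\mathfrak{S}_{m_k}$. This map is generically $[\mathfrak{S}_n:H]$-to-one, with $[\mathfrak{S}_n:H]\geq 2$. Hence its restriction to any non-empty Zariski open subset of $\prod_k S^{(m_k)}$ is not injective.

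What is true, and suffices, is a local statement. Take $V_k:=S\setminus\{p_j\mid j\neq k\}$. These sets overlap, but the pieces $Z\cap(V_k\times T)$ are still pairwise disjoint and cover $Z$, because $|Z|$ lies over $\{p_1,\dots,p_\ell\}$. So your splitting step survives, and the set-theoretic fibre of $\prod_k V_k^{(m_k)}\to S^{(n)}$ over $\sigma$ is the single point $y:=(m_kp_k)_k$.

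The stabiliser in $\mathfrak{S}_n$ of $(p_1,\dots,p_1,\dots,p_\ell,\dots,p_\ell)\in S^n$ is exactly $H$. Therefore $S^n/H\to S^n/\mathfrak{S}_n$ is \'etale at $y$: in characteristic zero, both completed local rings are the $H$-invariants of the completed local ring of $S^n$ at that point.

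An unramified morphism whose fibre over the $\kk$-point $\sigma$ is set-theoretically the single $\kk$-point $y$ has scheme-theoretic fibre $\Spec\kk$. Hence $(\pi_k(Z_k))_k\colon T\to\prod_k V_k^{(m_k)}$ factors through $y$, which is what you need. With this substitution your argument is complete, and it makes explicit a verification that the paper's proof leaves implicit.
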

\begin{proof}
  The restriction of the universal subscheme $U\subset S\times S^{[n]}$ to $S\times X_\sigma$ is a flat family $U_\sigma$ of clusters parametrised by $X_\sigma$, each with support equal to $\{p_1,\dots,p_\ell\}\subset S$. This subscheme decomposes as a disjoint union $U_\sigma=\bigsqcup_{1\leq k\leq \ell} U_k$, where $U_k\subset S\times X_k$ is the flat family of clusters of length $m_k$ supported at $p_k$. The universal properties of the local punctual Hilbert schemes $X_k$ determine a morphism $X_\sigma\to \prod_{1\leq k\leq \ell} X_k$. Similarly, the universal property of $X_\sigma$ applied to the flat family $\bigsqcup_{1\leq k\leq \ell} U_k$ determines the inverse morphism $\prod_{1\leq k\leq \ell} X_k\to X_\sigma$. 
\end{proof}

 Let $\Spec(B)\subseteq S^{[n]}$ be an affine open subscheme that intersects $X_\sigma$, and write $U_B\subset S\times \Spec(B)$ for the restriction of the universal subscheme. Let $I\subset B$ denote the ideal satisfying $X_\sigma\cap \Spec(B)\cong \Spec(B/I)$, and let $U_{B/I}\subset S\times \Spec(B/I)$ denote the restriction of the universal subscheme. The affine scheme $\Spec(\widehat{B}_I)$ defined by the $I$-adic completion of $B$ fits into a commutative diagram
 \begin{equation}
 \label{eqn:univsubschemes}
 \begin{tikzcd}
U_{B/I}\ar[r,hook]\ar[d]    & U_{\widehat{B}_I} \ar[d]\ar[r]              &    U_B \ar[d] \\
\Spec(B/I)\ar[r,hook] & \Spec(\widehat{B}_I) \ar[r] & \Spec(B),
  \end{tikzcd}
  \end{equation}
 where $U_{\widehat{B}_{I}} = \Spec(\widehat{B}_I) \times_{\Spec(B)}U_B$ is defined so that both squares are cartesian.
The central vertical map is flat and finite of degree $n$, so $U_{\widehat{B}_{I}}$ is a flat family of clusters of length $n$ over $\Spec(\widehat{B}_I)$.

\begin{lemma}
\label{lem:f_B}
  The flat family $U_{\widehat{B}_I}$ determines a universal morphism 
  \[
  f_B\colon \Spec(\widehat{B}_I)\longrightarrow \prod_{1\leq k\leq \ell} (\widehat{S}_p)^{[m_k]}.
  \]
\end{lemma}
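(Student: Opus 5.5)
The plan is to produce, over $\widehat{B}_I$, a decomposition of the flat family $U_{\widehat{B}_I}$ into $\ell$ flat families $U^{(k)}$. Each $U^{(k)}$ should be finite of degree $m_k$ over $\Spec(\widehat{B}_I)$ and should map into $\widehat{S}_{p_k}\times\Spec(\widehat{B}_I)$. The universal property of each $(\widehat{S}_{p_k})^{[m_k]}$ then gives a morphism $\Spec(\widehat{B}_I)\to(\widehat{S}_{p_k})^{[m_k]}$, and taking the product gives $f_B$. Since $U_{\widehat{B}_I}\to\Spec(\widehat{B}_I)$ is finite, $U_{\widehat{B}_I}=\Spec(C)$ with $C$ a finite, locally free $\widehat{B}_I$-algebra of rank $n$. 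In addition, $C$ is an $A\otimes_\kk\widehat{B}_I$-algebra via the second projection of $S\times\Spec(\widehat{B}_I)$.

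First step: decompose $C$. Because $C$ is finite over the $I$-adically complete ring $\widehat{B}_I$, it is $I C$-adically complete. Hence $(C,IC)$ is a henselian pair, and idempotents of $C/IC$ lift uniquely to $C$. By the left square of \eqref{eqn:univsubschemes}, $C/IC$ is the coordinate ring of $U_{B/I}$. This is a family of clusters over $X_\sigma\cap\Spec(B)$, and every fibre has support $\{p_1,\dots,p_\ell\}$. Therefore $C/IC$, as an $A$-algebra, has support exactly on these points, and the proof of Lemma~\ref{lem:localpunctual} gives $C/IC=\prod_k C_k^0$, with $C^0_k$ supported at $p_k$. Lifting the corresponding idempotents gives $C=\prod_k C_k$. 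Each $C_k$ is a direct summand of a locally free module, hence locally free. Its rank is constant equal to $m_k$, because the rank can be computed modulo $I$, where it is $m_k$ by Lemma~\ref{lem:localpunctual}.

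Second step: show that $A\to C_k$ factors through $\widehat{A}_{\mathfrak{m}_k}$. Let $\mathfrak{m}_k$ be the ideal of $p_k$. In $C^0_k$, some power $\mathfrak{m}_k^N$ maps to zero, since $C^0_k$ is finite over $B/I$ with all fibres supported at $p_k$, and $B/I$ is noetherian. Here I may need nilpotence modulo the nilradical, and then raise to a further power; $B/I$ noetherian makes this fine. Consequently $\mathfrak{m}_k^N C_k\subseteq IC_k$, and the $\mathfrak{m}_k$-adic topology on $C_k$ is finer than the $I$-adic one. Since $C_k$ is $I$-adically complete and separated, the map $A\to C_k$ extends continuously to $\widehat{A}_{\mathfrak{m}_k}\to C_k$. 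Thus $\Spec(C_k)\to S\times\Spec(\widehat{B}_I)$ factors through a closed subscheme of $\widehat{S}_{p_k}\times\Spec(\widehat{B}_I)$. It is a closed immersion, because $A\otimes\widehat{B}_I\to C_k$ is surjective and the extended map contains its image. This gives a flat family of length-$m_k$ clusters in $\widehat{S}_{p_k}$, and hence the desired morphism; I interpret $(\widehat{S}_p)^{[m_k]}$ in the statement as $(\widehat{S}_{p_k})^{[m_k]}$.

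I expect the main obstacle to be making the second step rigorous, in particular the continuity/extension argument. This requires that $\mathfrak{m}_k$ acts topologically nilpotently on $C_k$ with respect to the $I$-adic topology. I would first try to settle this via the noetherian nilpotence argument on $C_k^0$ above. One also needs $(\widehat{S}_{p_k})^{[m_k]}$ to exist and to represent families of closed subschemes of $\widehat{S}_{p_k}\times T$ that are finite and flat of degree $m_k$. I would use the same representability input implicitly assumed in Lemma~\ref{lem:localtoglobal}. The word "universal" in the statement then simply means that $f_B$ is the classifying map of this family.
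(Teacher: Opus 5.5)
Your proposal is correct. It rests on the same two facts as the paper's proof: $C$ is complete for the $IC$-adic topology, and an ideal of $A$ cutting out the support $\{p_1,\dots,p_\ell\}$ maps into $IC$ (up to a power). The difference is the order of the steps.

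\emph{The paper's route.} It sets $J:=\ker(A\to C/IC)$, the ideal of the scheme-theoretic image of $U_{B/I}\to S$. It extends $A\to C$ once to $\widehat{A}_J\to C$. The splitting of $U_{\widehat{B}_I}$ then comes for free from $\widehat{A}_J\cong\prod_k\widehat{A}_{\mathfrak{m}_k}$: the images in $C$ of the idempotents of this product are exactly the idempotents you construct by lifting.

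\emph{Your route.} You split $C$ first, lifting idempotents of $C/IC$ along the henselian pair $(C,IC)$. You then extend $A\to C_k$ to $\widehat{A}_{\mathfrak{m}_k}$ factor by factor, using $\mathfrak{m}_k^N C_k\subseteq IC_k$.

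\emph{Comparison.} The paper's ordering is more economical, since it needs no idempotent lifting and only one extension. Yours spells out points the paper leaves implicit:
\begin{enumerate}
\item the continuity argument that produces the map to the completion;
\item that each piece is a closed subscheme of $\widehat{S}_{p_k}\times\Spec(\widehat{B}_I)$;
\item that each piece is finite locally free of constant rank $m_k$. It is a direct summand of $C$, and its rank can be checked modulo $I$ because $I\widehat{B}_I$ lies in the Jacobson radical, so every connected component of $\Spec(\widehat{B}_I)$ meets $V(I)$.
\end{enumerate}
Your reading of $(\widehat{S}_p)^{[m_k]}$ as $(\widehat{S}_{p_k})^{[m_k]}$ is the intended one.
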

\begin{proof}
 Since the scheme-theoretic image of the natural morphism $U_{B/I}\to S$ is a 0-dimensional closed subscheme with support $\{p_1,\dots,p_\ell\}\subset S$, it is of the form $\Spec (A/J)$ for some ideal $J\subset A$ whose radical is the product of the maximal ideals $\mathfrak{m}_k$ for the points $p_k$  for $1\leq k\leq \ell$. Thus the morphism $U_B\to S$ induces a morphism $U_{\widehat{B}_I}\to \Spec (\widehat{A}_J)$. Since $\Spec (\widehat{A}_J)$ is naturally isomorphic to the scheme $\Spec \big(\prod_{1\leq k\leq\ell} \widehat{A}_{\mathfrak{m}_k}\big)=\bigsqcup_{1\leq k\leq \ell}\widehat{S}_{p_k}$, the universal subscheme $U_{\widehat{B}_I}$ over $\Spec(\widehat{B}_I)$ is a closed subscheme of $\big(\bigsqcup_{1\leq k\leq \ell}\widehat{S}_{p_k}\big)\times\Spec(\widehat{B}_I)$. This implies that  $U_{\widehat{B}_I}$ is the disjoint union of the universal subschemes for the Hilbert schemes $(\widehat{S}_p)^{[m_k]}$ for $1\leq k\leq \ell$ and, therefore, the universal properties of the fine moduli spaces $(\widehat{S}_p)^{[m_k]}$ define the desired morphism $f_B$.
 \end{proof}
 
  Now, consider the formal scheme $\widehat{S^{[n]}}_{X_\sigma}$ obtained as the formal completion of $S^{[n]}$ along $X_\sigma$, that is, the topological space $X_\sigma$ together with the sheaf of rings  $\mathcal{O}_{\widehat{S^{[n]}}} := \varprojlim \mathcal{O}_{S^{[n]}}/\mathscr{I}^n$, where $\mathscr{I}$ is the sheaf of ideals defining $X_{\sigma}$ in $S^{[n]}$.

\begin{proposition}
\label{prop:formalnbd}
 For $\sigma:=\sum_{1\leq k\leq \ell} m_kp_k\in S^{(n)}$,  the formal scheme  $\widehat{S^{[n]}}_{X_{\sigma}}$ is isomorphic to the formal completion of $\prod_{1\leq k\leq \ell} (\widehat{S}_{p_k})^{[m_k]}$ along $\prod_{1\leq k\leq \ell} X_k$.
In particular, the completion of $S^{[n]}$ along $X_{\sigma}$ depends only on the affine schemes $\widehat{S}_{p_1}, \dots \widehat{S}_{p_\ell}$ and the tuple $(m_1, \dots, m_\ell)\in \mathbb{N}^\ell$. 
\end{proposition}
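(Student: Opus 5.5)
The plan is to show that the morphisms $g$ from Lemma~\ref{lem:localtoglobal} and $f_B$ from Lemma~\ref{lem:f_B} induce mutually inverse isomorphisms of formal completions along $\prod_k X_k$ and $X_\sigma$. Write $P:=\prod_{1\leq k\leq \ell}(\widehat{S}_{p_k})^{[m_k]}$. The key observation is that $\widehat{S}_{p_k}\to S$ is a monomorphism on closed subschemes supported at $p_k$. Consequently the local punctual Hilbert scheme of $\widehat{S}_{p_k}$ at its closed point coincides with $X_k$, and Lemma~\ref{lem:localpunctual} identifies $\prod_k X_k\subset P$ with $X_\sigma\subset S^{[n]}$.

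\textbf{Step 1.} First check that $g$ maps $\prod_k X_k$ isomorphically onto $X_\sigma$. By construction $g$ sends a tuple of clusters to their disjoint union, so $g^{-1}(X_\sigma)$ is the closed subscheme of $P$ parametrising tuples with each cluster supported at the closed point. This is $\prod_k X_k$ by the observation above, and Lemma~\ref{lem:localpunctual} gives the isomorphism. It follows that $g$ induces a morphism of formal completions $\widehat{g}\colon \widehat{P}_{\prod X_k}\to \widehat{S^{[n]}}_{X_\sigma}$.

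\textbf{Step 2.} Next glue the $f_B$ over an affine cover $\{\Spec(B)\}$ of a neighbourhood of $X_\sigma$. Their universality ensures they agree on overlaps, because they are defined by the same flat family. Since the family over $\Spec(B/I^N)$ has support in $\{p_k\}$, each $f_B$ restricted to $\Spec(B/I^N)$ lands in $P$ with image set-theoretically in $\prod_k X_k$. Passing to the limit over $N$ therefore gives $\widehat{f}\colon \widehat{S^{[n]}}_{X_\sigma}\to \widehat{P}_{\prod X_k}$.

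\textbf{Step 3.} It remains to show that $\widehat{g}\circ\widehat{f}$ and $\widehat{f}\circ\widehat{g}$ are identities. Because both target schemes represent Hilbert functors, it suffices to compare the flat families they pull back on each infinitesimal thickening. For $\widehat{g}\circ\widehat{f}$, the pulled-back family is the disjoint union of the pieces of $U_{\widehat{B}_I}$, pushed forward to $S$. This is $U_{\widehat{B}_I}$ itself, i.e.\ the restriction of the universal family, so the composite is the identity.

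For $\widehat{f}\circ\widehat{g}$, start from the universal family on $P$ restricted to a thickening of $\prod_k X_k$ and push it into $S$. Re-decomposing it via completion at $J$ recovers the original components, because $\Spec(\widehat{A}_J)=\bigsqcup_k\widehat{S}_{p_k}$ and a family with nilpotent-thickened support at $p_k$ in $\widehat{S}_{p_k}$ factors uniquely through $S$. This gives the identity.

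The final assertion then follows immediately, since the completion of $P$ along $\prod_k X_k$ is built only from the $\widehat{S}_{p_k}$ and the $m_k$.

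The main obstacle is making the factorisation claims precise. One claim is that families over Artinian, or $I$-adically complete, bases with support at $p_k$ factor uniquely through $\widehat{S}_{p_k}$. The other is the technical issue that $\Spec(\widehat{B}_I)$ is not of finite type. I would handle this by working throughout with the truncations $\Spec(B/I^N)$, where the family is finite over a nilpotent thickening of $X_\sigma$ and hence killed by a power of $J$. Only afterwards would I pass to the inverse limit.
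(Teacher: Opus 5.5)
Your proposal is correct and takes essentially the same approach as the paper. Both glue the morphisms $f_B$ from Lemma~\ref{lem:f_B} into a morphism of formal schemes, compare it with $g$ from Lemma~\ref{lem:localtoglobal} under the identification $X_\sigma\cong\prod_k X_k$ of Lemma~\ref{lem:localpunctual}, and show the completed maps are mutually inverse; your universal-property check of the two composites on the thickenings $\Spec(B/I^N)$ just spells out what the paper asserts in its final sentence.
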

\begin{proof}
 For $h\in B$, the isomorphisms $(B/I^m)_h\to (B_h/I^mB_h)$ for all $m\in \mathbb{N}$ induce a map of rings $(\widehat{B}_I)_h\to \widehat{(B_h)}_{IB_h}$ and hence a morphism $\Spec \widehat{(B_h)}_{IB_h}\to \Spec(\widehat{B}_I)_h$. The composition of this map with the restriction of $f_B$ to $\Spec((\widehat{B}_I)_h)$ coincides with the map $f_{B_h}$ from Lemma~\ref{lem:f_B}.
 Thus, the morphisms from Lemma~\ref{lem:f_B} glue to give a morphism
  \[
  f\colon \widehat{S^{[n]}}_{X_\sigma}\longrightarrow \prod_{1\leq k\leq \ell} (\widehat{S}_p)^{[m_k]}
  \]
  of formal schemes. Compare this with the morphism
  \[
  g\colon\prod_{1\leq k\leq \ell} (\widehat{S}_p)^{[m_k]}\longrightarrow  S^{[n]}
  \]
  from Lemma~\ref{lem:localtoglobal}. Under the identification from Lemma~\ref{lem:localpunctual}, completing the morphisms $f$ and $g$ along $X_\sigma$ and $\prod_{1\leq k\leq \ell} X_k$ respectively gives mutually inverse isomorphisms of formal schemes.
\end{proof}

\begin{corollary}
\label{cor:completelocal}
For $[Z]\in S^{[n]}$, write $Z =\bigsqcup_{1\leq k\leq \ell} Z_k$ where $\supp(Z_k)=p_k$ and $\operatorname{length}(\mathcal{O}_{Z_k})=m_k$. 
The following formal schemes are isomorphic:
\begin{enumerate}
\item[\one] the formal completion of $S^{[n]}$ at the closed point $[Z]$;
\item[\two] the formal completion of $\prod_{1\leq k\leq \ell} S^{[m_k]}$ at the closed point $\big([Z_1], \dots, [Z_\ell]\big)$; and 
\item[\three] the formal completion of $\prod_{1\leq k\leq \ell} (\widehat{S}_{p_k})^{[m_k]}$ at the closed point $\big([Z_1],\dots,[Z_\ell]\big)$.
\end{enumerate}
Moreover, the same statement holds after replacing $S^{[n]}, S^{[m_k]}$ and $(\widehat{S}_{p_k})^{[m_k]}$ for $1\leq k\leq \ell$ by their underlying reduced schemes respectively.
\end{corollary}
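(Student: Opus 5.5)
The plan is to derive Corollary~\ref{cor:completelocal} from Proposition~\ref{prop:formalnbd} by completing further along the closed point $[Z]$, which lies in $X_\sigma$. Here $\sigma=\pi(Z)=\sum_k m_kp_k$. First, for \one$\cong$\three, I would use that completion is transitive. If $I\subset B$ defines $X_\sigma\cap\Spec(B)$ and $\mathfrak{n}\supseteq I$ is the maximal ideal of $[Z]$, then the $\mathfrak{n}$-adic completion of $B$ agrees with the $\mathfrak{n}\widehat{B}_I$-adic completion of $\widehat{B}_I$. So the completion of $S^{[n]}$ at $[Z]$ is the completion of the formal scheme $\widehat{S^{[n]}}_{X_\sigma}$ at $[Z]$. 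Similarly, the completion of $\prod_k(\widehat{S}_{p_k})^{[m_k]}$ at $([Z_1],\dots,[Z_\ell])$ is the completion at that point of its formal completion along $\prod_k X_k$. Proposition~\ref{prop:formalnbd} gives an isomorphism of these two formal schemes along $X_\sigma\cong\prod_k X_k$, the identification of Lemma~\ref{lem:localpunctual}. I need to check that this identification sends $[Z]$ to $([Z_1],\dots,[Z_\ell])$; this holds because the maps $f,g$ are built from decomposing, respectively taking the union of, the universal families. Completing both sides at these matching points then gives \one$\cong$\three.

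Next, \two$\cong$\three\ follows by applying \one$\cong$\three\ to each factor $S^{[m_k]}$ at $[Z_k]$, where the cycle is the single point $m_kp_k$. This identifies the completion of $S^{[m_k]}$ at $[Z_k]$ with the completion of $(\widehat{S}_{p_k})^{[m_k]}$ at $[Z_k]$. Completions of products of finite type schemes at a point are the completed tensor products of the factors' completions. I would need care for \three, since $(\widehat{S}_{p_k})^{[m_k]}$ is not of finite type over $\kk$. Since \one$\cong$\three\ already identifies it with the finite-type situation, the complete local rings are Noetherian, and the completed tensor product description still applies. Hence \two\ and \three\ agree.

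For the reduced statement, the key fact is that completion commutes with taking reductions for excellent rings. For $R$ local excellent, $\widehat{R_{\operatorname{red}}}=(\widehat R)_{\operatorname{red}}$, because $\widehat{R_{\operatorname{red}}}$ is reduced by excellence and $\widehat{R}\to\widehat{R_{\operatorname{red}}}$ is surjective with nilpotent kernel $\widehat{\mathrm{nil}(R)}=\mathrm{nil}(R)\widehat R$. All schemes involved are of finite type over $\kk$, or quotients of complete local Noetherian rings, hence excellent. Thus each formal completion of the reduced scheme is the reduction of the corresponding completion, and the isomorphisms above induce isomorphisms on reductions. I expect the main obstacle to be this excellence and product bookkeeping for the non-finite-type schemes $(\widehat{S}_{p_k})^{[m_k]}$, in particular checking that the reduction of a product's completion matches the completion of the product of reductions. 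In characteristic zero over an algebraically closed field, reducedness is preserved under these completed tensor products, which settles this point.
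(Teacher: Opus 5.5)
Your proposal is correct and follows essentially the paper's proof. You get \one$\cong$\three\ from Proposition~\ref{prop:formalnbd} after matching $[Z]$ with $([Z_1],\dots,[Z_\ell])$ via Lemma~\ref{lem:localpunctual}, \two$\cong$\three\ factorwise using that the completion of a product at a point is the completed tensor product of the factors' completions (the paper's Lemma~\ref{lem:completeproduct}), and the reduced case from excellence, which makes completion commute with taking reductions.

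The only difference is in the reduced product step. The paper commutes reduction with tensor products (over the perfect field $\kk$) and with completion factorwise, then applies Lemma~\ref{lem:completeproduct} to the reduced local rings, so it never needs your claim that completed tensor products of reduced complete local rings are reduced (that claim can in any case be obtained here by transport from the finite-type side \two).
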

\begin{proof}
The isomorphism $X_\sigma\cong \prod_{1\leq k\leq \ell} X_k$ identifies $[Z]$ with $([Z_1], \dots, [Z_\ell])$, so the isomorphism between \one\ and \three\ follows from Proposition~\ref{prop:formalnbd}\two. The formal schemes in \two\ and \three\ both have the single point $([Z_1,\dots, [Z_\ell])$ as their underlying topological space, so it remains to establish an isomorphism of complete local rings
\begin{equation}
   \label{eqn:completeproduct}
\widehat{\mathcal{O}}_{\prod_k S^{[m_k]},([Z_1], \dots, [Z_\ell])} \cong 
\widehat{\mathcal{O}}_{\prod_k (\widehat{S}_{p_k})^{[m_k]},([Z_1], \dots, [Z_\ell])}.
\end{equation}
Applying  Proposition~\ref{prop:formalnbd}\two\ to each cycle $m_k p_k\in S^{(m_k)}$ gives an isomorphism of between  $\widehat{S^{[m_k]}}_{X_k}$ and the formal completion of $\widehat{S}_{p_k}^{[m_k]}$ along $X_k$. Taking the stalk of the structure sheaves under this isomorphism gives an isomorphism of complete local rings
\begin{equation}
    \label{eqn:completerings}
\widehat{\mathcal{O}}_{S^{[m_k]},[Z_k]} \cong 
\widehat{\mathcal{O}}_{(\widehat{S}_{p_k})^{[m_k]},[Z_k]}
\end{equation}
 for $1\leq k\leq \ell$. The local rings for schemes of finite type over $\kk$ are excellent \cite[Tag 07QU]{stacks-project}, so the result follows by applying the isomorphism from Lemma~\ref{lem:completeproduct} below to both complete local rings from \eqref{eqn:completeproduct}, and then applying the isomorphisms \eqref{eqn:completerings} to each factor.
 \end{proof}

 \begin{lemma}
 \label{lem:completeproduct} 
 For $1\leq k\leq \ell$, let $(A_k,\mathfrak{m}_k)$ be a Noetherian local $\kk$-algebra, and let $\widehat{A_k}$ be the $\mathfrak{m}_k$-adic completion of $A_k$. For $A:=A_1\otimes_{\kk}\cdots \otimes_{\kk}A_\ell$, 
define the ideal $I:=\mathfrak{m}_1A+\cdots+\mathfrak{m}_\ell A$
in $A$, and for $B:= \widehat{A_1}\otimes_{\kk}\cdots\otimes_{\kk} \widehat{A_\ell}$, 
define the ideal $J:=\mathfrak{m}_1B+\cdots+\mathfrak{m}_\ell B$ 
in $B$. The $I$-adic completion of $A$ is isomorphic to the $J$-adic completion of $B$. If in addition each $A_k$ is excellent with nilradical $N_k$, then the result holds if we replace each $A_k$ throughout by $A_k/N_k$.
 \end{lemma}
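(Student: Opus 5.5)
The plan is to compare both completions through their truncations. For any $N\geq 1$ we have
\[
\widehat{A}_I=\varprojlim_N A/I^N \quad\text{and}\quad \widehat{B}_J=\varprojlim_N B/J^N,
\]
so it suffices to produce compatible isomorphisms $A/I^N\cong B/J^N$ for all $N$. The natural map $A\to B$, given by the tensor product of the completion maps $A_k\to \widehat{A_k}$, sends $I$ into $J$. It therefore induces maps $A/I^N\to B/J^N$, and these are compatible as $N$ varies.

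\textbf{Step 1: reduce to $A/I'^N\cong B/J'^N$ for products of powers.} Set $I':=\mathfrak{m}_1^NA+\cdots+\mathfrak{m}_\ell^NA$ and define $J'\subseteq B$ in the same way. Then $I^{\ell N}\subseteq I'\subseteq I^N$, and likewise for $J$. Hence the $I$-adic and $I'$-adic topologies agree, as do the $J$-adic and $J'$-adic ones, and it is enough to compare the cofinal systems $A/I'$ and $B/J'$. By right exactness of the tensor product,
\[
A/I'\cong (A_1/\mathfrak{m}_1^N)\otimes_\kk\cdots\otimes_\kk(A_\ell/\mathfrak{m}_\ell^N).
\]
Similarly $B/J'\cong \bigotimes_k \widehat{A_k}/\mathfrak{m}_k^N\widehat{A_k}$. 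Since $A_k/\mathfrak{m}_k^N\to \widehat{A_k}/\mathfrak{m}_k^N\widehat{A_k}$ is an isomorphism for Noetherian local rings, the two quotients are isomorphic via the natural map. These isomorphisms are compatible in $N$, so passing to the inverse limit gives $\widehat{A}_I\cong\widehat{B}_J$.

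\textbf{Step 2: the reduced version.} Here we apply Step 1 to the rings $A_k/N_k$. The only thing to check is that the completion of $A_k/N_k$ is the completion of $A_k$ with its nilradical factored out, that is,
\[
\widehat{A_k/N_k}\cong \widehat{A_k}/N_k\widehat{A_k}\cong(\widehat{A_k})_{\mathrm{red}}.
\]
This is where excellence enters, and it is the main obstacle. Completion is exact on finitely generated modules, so $\widehat{A_k/N_k}=\widehat{A_k}/N_k\widehat{A_k}$ holds for any Noetherian local ring. For an excellent reduced local ring the completion is again reduced (analytic unramifiedness, e.g.\ Stacks Project, the tag on reduced excellent local rings having reduced completion). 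Therefore $\widehat{A_k}/N_k\widehat{A_k}$ is reduced. It is also a quotient of $\widehat{A_k}$ by the nilpotent ideal $N_k\widehat{A_k}$, so it equals $(\widehat{A_k})_{\mathrm{red}}$.

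Consequently, the replaced statement, with $A_k$ replaced by $A_k/N_k$ and $\widehat{A_k}$ by $\widehat{A_k/N_k}=(\widehat{A_k})_{\mathrm{red}}$, is exactly Step 1 applied to the Noetherian local rings $A_k/N_k$. The identification of each completed factor with the reduced completion of $A_k$ is what makes the reduced version meaningful for the application in Corollary~\ref{cor:completelocal}.
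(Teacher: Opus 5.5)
Your proposal is correct and follows essentially the same route as the paper: the cofinal ideals $\mathfrak{m}_1^N A+\cdots+\mathfrak{m}_\ell^N A$, truncations that are tensor products of the rings $A_k/\mathfrak{m}_k^N\cong \widehat{A_k}/\mathfrak{m}_k^N\widehat{A_k}$, and, for the reduced version, excellence to show that $N_k\widehat{A_k}$ is the nilradical of $\widehat{A_k}$. The only difference is that the paper also notes that taking the reduced ring commutes with $\otimes_\kk$ over the perfect field $\kk$; as you correctly observe, the statement as literally phrased does not need this, and it already follows from Step 1 applied to the rings $A_k/N_k$.
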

 \begin{proof}
  For $n\geq 1$, the ideals $I_n:= \mathfrak{m}_1^n A+\dots +\mathfrak{m}_\ell^n A$ satisfy $A/I_n\cong (A_1/\mathfrak{m}_1^n)\otimes_\kk \cdots \otimes_{\kk} (A_\ell/\mathfrak{m}_\ell^n)$. The $I$-adic topology on $A$ is equivalent to that defined by the system of ideals $\{I_n\}_{n\geq 1}$, so the $I$-adic completion of $A$ can be computed as the inverse limit 
  \begin{equation}
      \label{eqn:Ahat}
  \widehat{A}
    \cong\varprojlim{} (A_1/\mathfrak{m}_1^n)\otimes_\kk \cdots \otimes_{\kk} (A_\ell/\mathfrak{m}_\ell^n).
   \end{equation}
  Similarly, the ideals $J_n:= \mathfrak{m}_1^n B+\dots +\mathfrak{m}_\ell^n B$ satisfy $B/J_n\cong (\widehat{A_1}/\mathfrak{m}_1^n\widehat{A_1})\otimes_\kk \cdots \otimes_{\kk} (\widehat{A_\ell}/\mathfrak{m}_\ell^n\widehat{A_\ell})$, and the $J$-adic completion of $B$ can be computed as the inverse limit
   \begin{equation}
 \label{eqn:Bhat}
 \widehat{B}
 \cong\varprojlim{} (\widehat{A_1}/\mathfrak{m}_1^n\widehat{A_1})\otimes_\kk \cdots \otimes_{\kk} (\widehat{A_\ell}/\mathfrak{m}_\ell^n\widehat{A_\ell}).
  \end{equation}
 Since $A_k$ is Noetherian, exactness of completion for finitely generated $A_k$-modules ensures that the short exact sequence 
\[ 0\longrightarrow \mathfrak{m}_k^n\longrightarrow A_k\longrightarrow A_k/\mathfrak{m}_k^n\longrightarrow 0 \]
 determines an isomorphism $\widehat{A_k}/\mathfrak{m}_k^n\widehat{A_k}\cong A_k/\mathfrak{m}_k^n$ for all $1\leq k\leq \ell$. Now \eqref{eqn:Ahat} and \eqref{eqn:Bhat} give $\widehat{A}\cong \widehat{B}$.
 
 For the second claim, it suffices to show that taking the reduced ring structure commutes with both tensor product and completion.
 Since $\kk$ is algebraically closed and hence perfect, the tensor product of reduced rings is reduced \cite[Tags 034N, 030V]{stacks-project}, so the universal property of reducedness implies that taking the reduced ring structure commutes with tensor product. For commutativity with completion, let $R$ be a Noetherian local $\kk$-algebra with nilradical $N$. Exactness of completion for finitely generated $R$-modules gives 
 \begin{equation}
     \label{eqn:completionexact}
     \widehat{R/N}\cong \widehat{R}/N \widehat{R}.
      \end{equation}
 The quotient $R/N$ is reduced and excellent, so the map $R/N\to \widehat{R/N}$ is regular and hence $\widehat{R/N}$ is reduced by \cite[Tag 07QK]{stacks-project}. The nilradical of $\widehat{R}$ is therefore contained in $N\widehat{R}$ by  \eqref{eqn:completionexact}. The opposite inclusion is clear, so $N\widehat{R}$ is the nilradical of $\widehat{R}$. Therefore \eqref{eqn:completionexact} expresses precisely commutativity of completion with taking the reduced ring structure.
  \end{proof}
 
\subsection{Generalising Fogarty's theorem to canonical surfaces}
 Fix $\ell\in \mathbb{N}$. For $1\leq k\leq \ell$, let $\Gamma_k\subset \SL(2,\kk)$ be a (possibly trivial) finite subgroup. Let $0\in \mathbb{A}^2/\Gamma_k$ denote the origin when $\Gamma_k$ is trivial, and the unique singular point when $\Gamma_k$ is nontrivial. We also write \[
 \widehat{(\mathbb{A}^2/\Gamma_k)}_{0}
 \]
 for the spectrum of the completion of the local ring $\mathcal{O}_{\mathbb{A}^2/\Gamma_k,0}$. 

 In what follows, for any scheme $X$, we write $X_\mathrm{red}$ for the underlying reduced subscheme.
  
\begin{lemma}
\label{lem:completionKleinian}
 For $1\leq k\leq \ell$,  let $Z_k$ be a subscheme of length $m_k\in \mathbb{N}$ in the affine scheme $\widehat{(\mathbb{A}^2/\Gamma_k)}_{0}$. Then the completion of the local ring of 
 \[
 \prod_{1\leq k\leq \ell} \Hilb^{m_k}\big(\widehat{(\mathbb{A}^2/\Gamma_k)}_{0}\big)_\mathrm{red}
 \]
 at $([Z_1],\dots,[Z_\ell]\big)$ is normal, and it's regular if the subgroup $\Gamma_k$ of $\SL(2,\kk)$ is trivial for all $1\leq k\leq \ell$. 
 \end{lemma}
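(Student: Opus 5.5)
The plan is to transport the problem from the completed Kleinian singularities back to the global affine varieties $\mathbb{A}^2/\Gamma_k$, where Corollary~\ref{cor:irreducible} gives normality, and then to use that normality is preserved by completion (excellence) and by products.

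\emph{Step 1: pass to a global model.} For each $k$, the subscheme $Z_k$ of $\widehat{(\mathbb{A}^2/\Gamma_k)}_0$ has length $m_k$ and is supported at the closed point. It is therefore defined by an ideal containing a power of the maximal ideal. Since $\widehat{A}/\widehat{\mathfrak{m}}^N\cong A/\mathfrak{m}^N$, the scheme $Z_k$ is also a subscheme $Z_k'$ of $\mathbb{A}^2/\Gamma_k$ supported at $0$. I then apply the isomorphism \eqref{eqn:completerings} from the proof of Corollary~\ref{cor:completelocal}, namely Proposition~\ref{prop:formalnbd} for the cycle $m_k\cdot 0$ on $S=\mathbb{A}^2/\Gamma_k$. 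In its reduced form, also recorded in Corollary~\ref{cor:completelocal}, this identifies the complete local ring of $\Hilb^{m_k}(\widehat{(\mathbb{A}^2/\Gamma_k)}_0)_{\mathrm{red}}$ at $[Z_k]$ with the complete local ring of $\Hilb^{m_k}(\mathbb{A}^2/\Gamma_k)_{\mathrm{red}}$ at $[Z_k']$. Lemma~\ref{lem:completeproduct}, including its reduced version, then identifies the completion of the product at $([Z_1],\dots,[Z_\ell])$ with the completion of $\prod_k \Hilb^{m_k}(\mathbb{A}^2/\Gamma_k)_{\mathrm{red}}$ at $([Z_1'],\dots,[Z_\ell'])$. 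The local rings involved are excellent because everything is of finite type over $\kk$.

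\emph{Step 2: normality of the global product.} By Corollary~\ref{cor:irreducible}, each $\Hilb^{m_k}(\mathbb{A}^2/\Gamma_k)_{\mathrm{red}}$ is isomorphic to the normal variety $\mathfrak{M}_{\theta_0}$. For trivial $\Gamma_k$ it is $\Hilb^{m_k}(\mathbb{A}^2)$, which is smooth: either use the quiver description with $\theta_0$ generic, or cite Fogarty. A product over the algebraically closed field $\kk$ of normal varieties is normal, and a product of smooth varieties is smooth. So $P:=\prod_k \Hilb^{m_k}(\mathbb{A}^2/\Gamma_k)_{\mathrm{red}}$ is a normal variety, and it is regular when every $\Gamma_k$ is trivial.

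\emph{Step 3: pass to the completion.} The local ring $\mathcal{O}_{P,x}$ at $x=([Z_1'],\dots,[Z_\ell'])$ is an excellent normal local domain. For excellent rings the completion map is regular, so normality ascends to $\widehat{\mathcal{O}}_{P,x}$ (Stacks, Tag 0C23 / 07QK-type statements). Regularity always passes to the completion. One detail needs care here: the completion of $\mathcal{O}_{P,x}$ must be compared with the completion of the tensor product of the local rings of the factors at the ideal $\sum_k \mathfrak{m}_k$. These agree because that ideal generates the maximal ideal of $x$ up to localisation, which is exactly the setup of Lemma~\ref{lem:completeproduct}.

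The main obstacle is Step 1. The reduced statement in Corollary~\ref{cor:completelocal} is for completions at single points of reduced Hilbert schemes, and it is easy to conflate ``reduce, then complete'' with ``complete, then reduce''. I would handle this with the argument already given in Lemma~\ref{lem:completeproduct}: for an excellent local ring $R$ with nilradical $N$, the ideal $N\widehat{R}$ is the nilradical of $\widehat{R}$. Applying this on both sides of \eqref{eqn:completerings} reduces the reduced statement to the unreduced isomorphism, which is already proved.
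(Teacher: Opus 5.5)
Your proposal is correct and is essentially the paper's proof. The paper gets normality (resp.\ smoothness) of the global product $\prod_k \Hilb^{m_k}(\mathbb{A}^2/\Gamma_k)_{\mathrm{red}}$ from Corollary~\ref{cor:irreducible} and smoothness of $\Hilb^{m}(\mathbb{A}^2)$, passes normality up to the completion at the point, and transfers to the completed Kleinian singularities via Corollary~\ref{cor:completelocal}; you do the transfer first and use excellence where the paper cites Zariski's theorem on analytic normality, which amounts to the same thing.

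One small caveat: $\Hilb^{m_k}\big(\widehat{(\mathbb{A}^2/\Gamma_k)}_0\big)$ is not of finite type over $\kk$. So your claim that all local rings involved are excellent ``because everything is of finite type'' does not cover that side of \eqref{eqn:completerings}. Like the paper, you should simply invoke the reduced statement of Corollary~\ref{cor:completelocal} rather than claim to rederive it.
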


\begin{proof}
 For $1\leq k\leq \ell$, it is well known that the Hilbert scheme of $m_k$-points in the affine plane is nonsingular (see, for example, \cite{Craw21, NakajimaHilbert}). Thus, if each subgroup $\Gamma_k$ is trivial, then $\Hilb^{m_k}(\mathbb{A}^2/\Gamma_k)$ is nonsingular. Otherwise, some $\Gamma_k$ is nontrivial, and Corollary~\ref{cor:irreducible}  tells us that $\Hilb^{m_k}(\mathbb{A}^2/\Gamma_k)_\mathrm{red}$ is normal, so $\prod_{1\leq k\leq \ell} \Hilb^{m_k}(\mathbb{A}^2/\Gamma_k))_\mathrm{red}$ is normal. Zariski~\cite[Th\'{e}or\`{e}me~2]{Zariski50} implies that the complete local ring $\widehat{\mathcal{O}}_{X,p}$ of $X:= \prod_{1\leq k\leq \ell} \Hilb^{m_k}(\mathbb{A}^2/\Gamma_k))$ at the point $p:=([Z_1],...,[Z_\ell])$ is normal.  It follows from Corollary~\ref{cor:completelocal} that the  complete local ring $\widehat{\mathcal{O}}_{X',p}$ of 
 \[
 X':=\prod_{1\leq k\leq \ell} \Hilb^{m_k}\big(\widehat{(\mathbb{A}^2/\Gamma_k)}_0\big)_\mathrm{red}
 \]
 at the point $p=([Z_1],...,[Z_\ell])$ is normal.
 \end{proof}

\begin{proof}[Proof of Theorem~\ref{thm:HilbnSintro}]
Since $S$ is connected and quasi-projective, it follows that $S^{[n]}:=\Hilb^{n}(S)$ is also connected and quasi-projective (see, for example,  \cite[Lemma~3.7]{Lehn04}).

To see that $S^{[n]}_\mathrm{red}$ is normal, we may reduce to the case where $S$ is affine by \cite[Lemma~2.6]{Bertin08}. We need only show that the local ring $\mathcal{O}_{S^{[n]}_\mathrm{red},[Z]}$ is normal for each $[Z]\in S^{[n]}_\mathrm{red}$, and for this, it suffices by \cite[Tag 0FIZ]{stacks-project} to show that the complete local ring $\widehat{\mathcal{O}}_{S^{[n]}_\mathrm{red},[Z]}$ is normal. 
Corollary~\ref{cor:completelocal} shows that the completion of $S^{[n]}_\mathrm{red}$ at $[Z]$ is isomorphic to the completion of the product
\[
\prod_{1\leq k\leq \ell} \Hilb^{m_k}(\widehat{S}_{p_k})_\mathrm{red}
\]
at a specific point $([Z_1],\dots,[Z_\ell]\big)$, where $\widehat{S}_{p_k}$ is the spectrum of the completion of the local ring $\mathcal{O}_{S,p_k}$. Since $S$ has canonical singularities, the complete local ring $\widehat{\mathcal{O}}_{S,p_k}$ at $p_k\in S$ is isomorphic to the complete local ring at either the origin in the affine plane, or at 
the unique singular point of a Kleinian singularity $\mathbb{A}^2/\Gamma_k$ for some nontrivial finite subgroup $\Gamma_k\subset \SL(2,\kk)$. As above, we allow $\Gamma_k$ to be trivial, and write $0\in \mathbb{A}^2/\Gamma_k$ for the origin when $\Gamma_k$ is trivial, and the singular point when $\Gamma_k$ is nontrivial. Thus, each $\widehat{S}_{p_k}$ is isomorphic to the scheme $\widehat{(\mathbb{A}^2/\Gamma_k)}_{0}$ for some finite (possibly trivial) subgroup $\Gamma_k\subset \SL(2,\kk)$, so we must show that the complete local ring of 
\[
\prod_{1\leq k\leq \ell} \Hilb^{m_k}\big(\widehat{(\mathbb{A}^2/\Gamma_k)}_{0}\big)_\mathrm{red}
\]
at the point $([Z_1],\dots,[Z_\ell]\big)$ is normal.
 This was established in Lemma~\ref{lem:completionKleinian}, so $S^{[n]}_\mathrm{red}$ is normal.

Since the Noetherian scheme $S^{[n]}_\mathrm{red}$ is both connected and normal, \cite[Tag033M]{stacks-project} establishes 
that it is an integral (and hence irreducible) scheme. In particular, $S^{[n]}_\mathrm{red}$ is a quasi-projective, integral scheme over $\kk$, so it is a variety over $\kk$.

To see that $\Sym^n(S)$ has only canonical singularities, we apply the argument from the proof of \cite[Proposition~2.4, c.f.~(2.5)]{Beauville83}, replacing the rational 2-form throughout by a rational $2n$-form. Since each 
  resolution of $S^{[n]}_\mathrm{red}$ also provides a resolution of $\Sym^n(S)$, there cannot exist an exceptional divisor of negative discrepancy
  because $\Sym^n(S)$ has canonical singularities. It follows that 
  $S^{[n]}_\mathrm{red}$ also has canonical singularities.
\end{proof}
 
 Since irreducibility does not depend on the choice of scheme structure, the penultimate paragraph in the proof of Theorem~\ref{thm:HilbnSintro} above implies that the Hilbert scheme of $n$-points $S^{[n]}:=\Hilb^{n}(S)$ is irreducible without making reference to the work of Zheng~\cite{Zheng23}. In addition, our approach allows for a new proof of Fogarty's theorem:

\begin{theorem}[Fogarty]
\label{thm:fogarty}
Let $n\geq 1$. If $S$ is nonsingular, then $\Hilb^{n}(S)$ is nonsingular.
\end{theorem}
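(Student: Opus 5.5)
The plan is to run the same local-to-global argument as in the proof of Theorem~\ref{thm:HilbnSintro}, but without passing to reduced subschemes, since in the nonsingular case the local models are the Hilbert schemes of points on $\mathbb{A}^2$, which are already known to be nonsingular. Nonsingularity is local, so I first reduce to the case where $S$ is affine, exactly as in the previous proof via \cite[Lemma~2.6]{Bertin08}. Since $S^{[n]}$ is of finite type over $\kk$, its local rings are Noetherian. A Noetherian local ring is regular if and only if its completion is regular. It therefore suffices to show that $\widehat{\mathcal{O}}_{S^{[n]},[Z]}$ is regular for each closed point $[Z]\in S^{[n]}$.

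Fix $[Z]$ and decompose $Z=\bigsqcup_k Z_k$ with $\supp(Z_k)=p_k$ and $\operatorname{length}(\mathcal{O}_{Z_k})=m_k$. By Corollary~\ref{cor:completelocal}, comparing \one\ with \three\ in its non-reduced form, the completion of $S^{[n]}$ at $[Z]$ is isomorphic to the completion of $\prod_k (\widehat{S}_{p_k})^{[m_k]}$ at $([Z_1],\dots,[Z_\ell])$. Since $S$ is a nonsingular surface over the algebraically closed field $\kk$, each $\widehat{\mathcal{O}}_{S,p_k}\cong \kk[[x,y]]$. Hence $\widehat{S}_{p_k}\cong \widehat{(\mathbb{A}^2)}_0$.

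Next I apply Corollary~\ref{cor:completelocal} again, now with $S$ replaced by $\mathbb{A}^2$ and using the equivalence \two$\Leftrightarrow$\three. Each $Z_k$, viewed as a cluster supported at the origin of $\mathbb{A}^2$, then shows that this completion equals the completion of $\prod_k \Hilb^{m_k}(\mathbb{A}^2)$ at the corresponding point. Each $\Hilb^{m_k}(\mathbb{A}^2)$ is nonsingular, as cited in the proof of Lemma~\ref{lem:completionKleinian} \cite{Craw21, NakajimaHilbert}; this is the case $\Gamma_k$ trivial, and no reducedness issue arises because the scheme is smooth. A product of smooth $\kk$-varieties is smooth, so the complete local ring at this point is regular. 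By the first step, $S^{[n]}$ is then regular at $[Z]$.

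The main point to check is that Corollary~\ref{cor:completelocal} is used in its non-reduced form, so that the conclusion applies to the actual scheme $\Hilb^n(S)$ and not merely to its reduction. The same corollary must also be valid for $\mathbb{A}^2$ with clusters at the origin, so that the formal models coincide. The remaining steps are bookkeeping. Connectedness and dimension $2n$ follow from irreducibility, established in the proof of Theorem~\ref{thm:HilbnSintro}, together with the dimension of the open locus of reduced clusters; these facts are not needed for nonsingularity itself.
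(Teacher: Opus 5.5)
Your proposal is correct and is essentially the paper's own argument. Regularity is detected on completions, and Corollary~\ref{cor:completelocal}, used in its non-reduced form, reduces to products of Hilbert schemes of $\widehat{(\mathbb{A}^2)}_0$. The nonsingularity of $\Hilb^{m}(\mathbb{A}^2)$, quoted in the first two sentences of the proof of Lemma~\ref{lem:completionKleinian}, then finishes the proof. One small point of bookkeeping: when you reapply the corollary to $\mathbb{A}^2$ with all $Z_k$ supported at the origin, apply the factorwise isomorphisms \eqref{eqn:completerings} together with Lemma~\ref{lem:completeproduct}, or translate the clusters to distinct points. This is what the paper implicitly does as well.
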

\begin{proof}
Note that $\Hilb^{n}(S)$ is nonsingular at $[Z]$ if and only if the complete local ring $\widehat{\mathcal{O}}_{\Hilb^{n}(S),[Z]}$ is regular for each $[Z]\in \Hilb^{n}(S)$ by \cite[Tag 07NU]{stacks-project}. The argument from the proof of Theorem~\ref{thm:HilbnSintro} above, combined with the first two sentences from the proof of Lemma~\ref{lem:completionKleinian} (which does not require the underlying reduced scheme structure), shows that this holds if the subgroup $\Gamma_k\subset \SL(2,\kk)$ associated to each point $p_k\in S$ is trivial, which is the case if $S$ is nonsingular.
\end{proof}

 \begin{proof}[Proof of Theorem~\ref{thm:nleq7intro}]
   For a connected and quasiprojective surface $S$ with canonical singularities, our goal is to show that $S^{[n]}:= \Hilb^n(S)$ is reduced for $n\leq 7$. In fact, we'll prove it's normal. Theorem~\ref{thm:reduced} shows that $\Hilb^n(\mathbb{A}^2/\Gamma)$ is reduced for $n\leq 7$. Therefore, in the proof of Theorem~\ref{thm:HilbnSintro} above, we may apply both Corollary~\ref{cor:completelocal} and Lemma~\ref{lem:completionKleinian} without having to take the underlying reduced scheme structure at any stage. We conclude that $S^{[n]}$ is normal, and hence reduced.
 \end{proof}

        \subsection{On surfaces with symplectic singularities}
      It remains to prove Corollary~\ref{cor:Beauville}. For this, let $\kk=\mathbb{C}$, the field of complex numbers. Recall that a variety $X$ over $\mathbb{C}$ has \emph{symplectic singularities} if the nonsingular locus $X_{\mathrm{reg}}$ has an everywhere nondegenerate closed 2-form $\omega$, and for any resolution of singularities $\pi\colon Y\to X$, the form $\pi^*(\omega)$ extends to a regular 2-form on $Y$. 

      \begin{proof}[Proof of Corollary~\ref{cor:Beauville}]
      We claim that the regular locus $U$ of $\Hilb^{n}(S)$ admits a symplectic form. Indeed, since the regular locus $S^{\mathrm{reg}}$ in $S$ is a smooth symplectic surface,  the open subset $V:= \Hilb^{n} (S^{\mathrm{reg}})$ of $U$ admits a symplectic form $\omega$ by Beauville~\cite[Proposition~5]{Beauville83}.  Since $U$ is nonsingular, it suffices to show that the complement of $V$ in $U$ has codimension two or more. 
  
   If $S$ is nonsingular, then $V=U$ and we're done. Otherwise, let $\pi\colon \Hilb^{n}(S)\to \Sym^n(S)$
  be the Hilbert--Chow morphism, and let $W\subset \Sym^n(S)$ be the locus consisting of points for which the support of the corresponding cycle in $S$ contains at least one singular point of $S$. Then $\pi^{-1}(W)$ has codimension two by Lemma \ref{lem} below, so the locus
 $U\setminus V=U\cap \pi^{-1}(W)$ has codimension at least two in $U$. The claim now follows. Finally, since each resolution of $\Hilb^{n}(S)$ also provides a resolution of $\Sym^n(S)$, the fact that the pullback of the symplectic form on $U$ extends to a regular 2-form on any given resolution follows from the fact that $\Sym^n(S)$ has symplectic singularities.
\end{proof}

 Let $p_1,\dots,p_\ell\in S$ denote the singular points. Since $\Sym^n(S)$ is a symplectic variety, it admits a stratification by symplectic leaves by Kaledin~\cite[Theorem 2.3]{Kaledin06}. In this case, if we define the set 
 \[
  \Lambda:=\Big\{(m_1, \dots,m_\ell; \nu) \mid m_1, \dots, m_\ell\in \ZZ_{\geq 0}, \nu \text{ is a partition of } d:= n-\sum_{1\leq k\leq \ell} m_k\geq 0\Big\},
  \]
  then the stratum associated to $\lambda:=(m_1, \dots,m_\ell; \nu) \in \Lambda$ is the locally closed subset 
      \[
  \Sym_\lambda^n(S):= \left\{ \sum_{k=1}^{\ell+l} m_ip_i \in \Sym^n(S) \mid \begin{array}{c} \text{partition }\nu=(m_{\ell+1}, \dots, m_{\ell+l}) \text{ has length }l(\nu)=l\\ p_{\ell+1}, \dots, p_{\ell+l}\in S^{\mathrm{reg}}\text{ satisfy }p_{\ell+i}\neq p_{\ell+j} \text{ for }i\neq j\end{array}\right\}
  \]
 of $\Sym^n(S)$.  Note that the dimension of the stratum $\Sym_\lambda^n(S)$ is $2l(\nu)$, and the open stratum is indexed by the element $\lambda=(0,\dots, 0;1, \dots, 1)$. 
  
\begin{lemma}\label{lem}
 Assume that $S$ is singular. Then $\pi^{-1}(W)$ has codimension two in $\Hilb^{n}(S)$.
\end{lemma}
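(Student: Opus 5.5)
We will bound the dimension of $\pi^{-1}(W)$ stratum by stratum, using the decomposition of $\Sym^n(S)$ into the locally closed strata $\Sym^n_\lambda(S)$ for $\lambda\in\Lambda$. The closed set $W$ is the union of those strata $\Sym^n_\lambda(S)$ with $\lambda=(m_1,\dots,m_\ell;\nu)$ such that $m_k>0$ for some $k$. Since $\Hilb^n(S)$ is irreducible of dimension $2n$ by Theorem~\ref{thm:HilbnSintro}, it suffices to show that $\dim \pi^{-1}(\Sym^n_\lambda(S))\le 2n-2$ for each such $\lambda$. Conversely, $\pi^{-1}(W)$ is nonempty and of codimension exactly two. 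To see this, take $\lambda=(1,0,\dots,0;1,\dots,1)$. Its stratum has dimension $2(n-1)$, and $\pi$ is injective over it, since a length-one subscheme at $p_1$ is unique. Hence the codimension equals two on the nose.

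To bound the fibre dimension, fix $\sigma=\sum_k m_kp_k+\sum_i m_{\ell+i}p_{\ell+i}\in \Sym^n_\lambda(S)$. Lemma~\ref{lem:localpunctual} identifies $\pi^{-1}(\sigma)$ with the product of local punctual Hilbert schemes $X_k$. For the regular points $p_{\ell+i}$ we use the classical result of Brian\c{c}on that the punctual Hilbert scheme of $m$ points at a smooth surface point has dimension $m-1$. Summing over the regular points, these contribute $\sum_i (m_{\ell+i}-1)=d-l(\nu)$. For each singular point $p_k$ with $m_k>0$, we need the bound $\dim X_k\le m_k-1$. Granting these bounds, we get
\[
\dim\pi^{-1}(\Sym^n_\lambda(S))\le 2l(\nu)+(d-l(\nu))+\sum_{m_k>0}(m_k-1)=n+l(\nu)-\#\{k: m_k>0\}.
\]
Since $l(\nu)\le d\le n-1$ whenever some $m_k>0$, this is at most $2n-2$.

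The main obstacle is the bound $\dim X_k\le m_k-1$ for the punctual Hilbert scheme at a Kleinian singular point. By Proposition~\ref{prop:formalnbd} this bound is local, so we may reduce to $\mathbb{A}^2/\Gamma$ at its singular point. We would first try the following argument. By Corollary~\ref{cor:irreducible}, $\Hilb^{m}(\mathbb{A}^2/\Gamma)_{\mathrm{red}}\cong\mathfrak{M}_{\theta_0}$. This is a symplectic variety mapping projectively onto $\Sym^m(\mathbb{A}^2/\Gamma)$, and it admits the symplectic resolution $\mathfrak{M}_\theta\to\mathfrak{M}_{\theta_0}$ with $\theta\in C_+$. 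The composite $\mathfrak{M}_\theta\to\Sym^m(\mathbb{A}^2/\Gamma)$ is a symplectic resolution, hence semismall by Kaledin. Its fibre over the point $m\cdot 0$, which is a leaf of dimension $0$, therefore has dimension at most $\tfrac12\cdot 2m=m$.

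This only gives the bound $\dim X_k\le m_k$, which is too weak. To improve it, use that the fibre over $m\cdot 0$ is a Lagrangian subvariety of the $2m$-dimensional symplectic resolution, of pure dimension $m$, and that $\mathfrak{M}_\theta\to\mathfrak{M}_{\theta_0}$ contracts it further. The Hilbert--Chow map on $\mathfrak{M}_\theta\cong m\Gamma\text{-}\Hilb(\mathbb{A}^2)$ should have positive-dimensional fibres along the Lagrangian, coming from the exceptional curves of the minimal resolution, and these are collapsed by $\pi_J$. Alternatively, and perhaps more simply, we can argue directly. $X_k$ embeds in the punctual Hilbert scheme of $\mathbb{A}^3$ at the origin. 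For ideals of colength $m$ in a two-dimensional local ring, a Brian\c{c}on-type cell or stratification by Hilbert function argument bounds the dimension by $m-1$. We would attempt this direct argument first, and fall back on the symplectic/semismall argument if needed.
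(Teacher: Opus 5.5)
Your overall strategy is the same as the paper's: bound $\dim\pi^{-1}(\Sym^n_\lambda(S))$ leaf by leaf, using the product decomposition of fibres. However, the local estimate your count depends on is false. At a Kleinian singular point $p_k$ one does \emph{not} have $\dim X_k\le m_k-1$. Already for $m_k=2$, a length-two subscheme supported at $p_k$ corresponds to an ideal $I$ with $\mathfrak{m}^2\subset I\subset\mathfrak{m}$ and $\dim\mathfrak{m}/I=1$. Since the embedding dimension of a Kleinian singularity is $3$, these ideals are the hyperplanes in the $3$-dimensional space $\mathfrak{m}/\mathfrak{m}^2$. So $X_k\cong\mathbb{P}^2$ and $\dim X_k=2=m_k$. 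Consequently neither the ``contraction by $\pi_J$'' idea nor a Brian\c{c}on-type argument in a two-dimensional local ring can give what you want. The semismall bound $\dim X_k\le m_k$ that you dismissed as too weak cannot be improved in general.

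The missing idea is that this weaker bound suffices, provided you treat $m_k=1$ separately and do the count less crudely. If $m_k=1$ then $X_k$ is a single point; this is exactly your injectivity observation. Combine three inputs: $\dim X_k\le m_k$ for $m_k\ge 2$, Brian\c{c}on at regular points, and $l(\nu)\le d=n-\sum_k m_k$ (rather than the crude $d\le n-1$). Whenever some $m_k>0$ you then get
\[
\dim\pi^{-1}(\Sym^n_\lambda(S))\le l(\nu)+d+\sum_{m_k\ge 2}m_k\le 2n-\sum_{m_k\ge 2}m_k-2\,\#\{k : m_k=1\}\le 2n-2 .
\]
With $\dim X_k\le m_k$ for all $k$ and your crude estimate, you would only reach $2n-1$; that is why you thought a sharper local bound was needed. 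The corrected count is the paper's argument written out in coordinates. The Hilbert--Chow morphisms $\tau_k$ are semismall, so over leaves of $W$ of codimension $2c\ge 4$ the preimage has dimension at most $2n-c\le 2n-2$. Over the codimension-two leaves of $W$ (a single $m_k=1$, all other points distinct and regular) the fibres are singletons.
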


\begin{proof}
 Consider a point $\sigma:=\sum_{1\leq k\leq \ell+l} m_ip_i \in \Sym_\lambda^n(S)$ for some  $\lambda:=(m_1, \dots,m_\ell; \nu) \in \Lambda$, where we write $\nu=(m_{\ell+1}, \dots, m_{\ell+l})$ as above. Note that $\sigma\in W$ if and only if $m_k\geq 1$ for some $1\leq k\leq \ell$. Therefore $W$ is the complement in $\Sym^n(S)$ of $\Sym^n(S^{\mathrm{reg}})$, so $W$ has codimension at least two. In fact, $W$ contains the codimension two strata indexed by those elements $\lambda=(0,\dots,0,1,0,\dots,0;\nu)$ with $l(\nu)=n-1$, so $W$ has ($\ell$ irreducible components of) codimension two. 

 For $1\leq k\leq \ell+l$, write $\pi_k\colon \Hilb^{m_k}(S)\to \Sym^{m_k}(S)$ for the Hilbert--Chow morphism. Apply both Lemma~\ref{lem:localpunctual} and Proposition~\ref{prop:formalnbd}, to obtain an isomorphism 
 \begin{equation}
 \label{eqn:productoffibres}
 \pi^{-1}(\sigma)\cong \prod_{1\leq k\leq \ell+l} \pi_k^{-1}(m_kp_k),
 \end{equation}
 where each fibre $\pi_k^{-1}(m_kp_k)$ depends only on $m_k\in \ZZ_{\geq 0}$ and the formal neighbourhood $\widehat{S}_{p_k}$. As in the proof of Theorem~\ref{thm:HilbnSintro} above, there is an isomorphism $\widehat{S}_{p_k}\cong \widehat{\mathbb{A}^2/\Gamma_k}$ for a finite subgroup $\Gamma_k\subset \SL(2,\kk)$, where $\Gamma_k$ is nontrivial for $1\leq k\leq \ell$, and it is trivial for $\ell+1\leq k\leq \ell+l$. In particular, $\pi_k^{-1}(m_kp_k)$ is isomorphic to the fibre of the Hilbert Chow morphism 
 \[
 \tau_k\colon \Hilb^{m_k}(\mathbb{A}^2/\Gamma_k)\longrightarrow \Sym^{m_k}(\mathbb{A}^2/\Gamma_k)
 \] 
 over a point in the unique minimal symplectic leaf. This morphism is obtained by variation of GIT quotient for a Nakajima quiver variety: for $1\leq k\leq \ell$, this follows from Theorem~\ref{thm:Hilbnintro} and for $\ell+1\leq k\leq \ell+l$, it is well known \cite{Craw21, NakajimaHilbert}. Therefore, each  $\tau_k$ is semismall by \cite[Theorem~A.1]{BC20}. 
 
 Finally, let $\sigma\in W$ lie in a leaf $\Sym_\lambda^n(S)$ for  $\lambda=(0,\dots,0,1,0,\dots,0;\nu)$ with $l(\nu)=n-1$.
  The only nonzero values of $m_k$ satisfy $m_k=1$, so the corresponding maps $\tau_k$ are isomorphisms. Therefore each $\pi_k^{-1}(p_k)$ is a singleton, and hence so is $\pi^{-1}(\sigma)$ by \eqref{eqn:productoffibres}. Thus, $\pi$ is an isomorphism over each symplectic leaf in $W$ that has codimension two in $\Sym^n(S)$. The symplectic leaves in $W$ that are not of codimension two in $\Sym^n(S)$ are of codimension at least four, but then semismallness implies that the exceptional locus of $\pi$ over $W$ does not contain any divisor.
\end{proof}

\appendix

\section{The case when vertex $0$ does not lie in $J$}
\label{sec:appendix}
 As we observed at the start of Section~\ref{sec:cornering}, the proofs of Theorems~\ref{thm:HilbnSintro}, \ref{thm:nleq7intro}, \ref{thm:Hilbnintro} and Corollary~\ref{cor:Beauville} require only the case $J=\{0\}$, so the reader interested only in those results can ignore this appendix; in fact, the same is true for any subset $J\subseteq \{0,1,\dots, r\}$ that contains vertex $0$. 
   
 However, the proofs of Theorems~\ref{thm:closedimmersion} and \ref{thm:CGGS21} for subsets $J$ satisfying $0\not\in J$ rely on the second paragraph in the proof of Lemma~\ref{lem:nocyclesneededinA}, and that requires the following technical result.

\begin{proposition}
\label{prop:CBresultsforvertexj}
  Let $n\geq 1$. For any $0\leq j\leq r$, the algebra $\kk[\Rep(\PiMcKay ,n\delta)]^{\GL(n\delta)}$ is generated by the trace functions for cycles in $\McKay$ with head and tail at vertex $j\in \McKay_0$.
\end{proposition}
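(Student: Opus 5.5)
The case $j=0$ is exactly the final statement of Proposition~\ref{prop:M0Sym^n}, or rather the part of its proof recalled from \cite[Theorem~3.4]{CBdecomp}. So assume $j\neq 0$. The plan is to transport the vertex-$0$ argument of Crawley-Boevey to vertex $j$ in three steps.

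\textbf{Step 1: the case $n=1$.} Here $e_j\PiMcKay e_j$ is the $R_0$-module $\End_{R_0}(R_j)$ under \eqref{eqn:PiEnd}. The map $R_0\cong e_0\PiMcKay e_0\to e_j\PiMcKay e_j$ sends $f$ to multiplication by $f$ on $R_j$. It can be realised on paths by transporting a cycle $p$ at $0$ to a cycle at $j$: choose a path $u$ from $0$ to $j$, a path $u'$ from $j$ to $0$, and form $u p u'$, or use the central element structure. I would avoid choosing paths and argue with traces instead.

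For a $\PiMcKay$-module $M$ of dimension vector $\delta$ that is simple (a generic point), $M\cong$ the fibre at a free orbit, i.e.\ $\kk\Gamma$ as a $\Gamma$-module. Each $f\in R_0$ acts on $e_jM$ by the scalar $f(x)$. Hence
\[
\tr\big(f\vert_{e_jM}\big)=\dim(\rho_j)\,\tr\big(f\vert_{e_0M}\big).
\]
Choose elements $q_1,q_2,q_3\in e_j\PiMcKay e_j$ corresponding to $z_1,z_2,z_3$ acting on $R_j$. Each $q_\ell$ is a linear combination of cycles at $j$, namely $q_\ell=\sum_{a}$ (path $j\to0$)$\,p_\ell\,$(path $0\to j$) via the decomposition of $R_j$. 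Then
\[
\tr_{q_\ell}=\delta_j\,\tr_{p_\ell}
\]
holds on the dense set of simple modules. Since both sides are $\GL(\delta)$-invariants and the quotient $\mathbb{A}^2/\Gamma$ is reduced and irreducible, the identity holds everywhere. Because $\delta_j\neq0$ in characteristic zero, the trace functions $\tr_{q_\ell}$ generate \eqref{eqn:Kroneimeraffine}. These $q_\ell$ are the classes \eqref{eqn:qell} referred to in Lemma~\ref{lem:nocyclesneededinA}.

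\textbf{Step 2: general $n$, generic points.} By Proposition~\ref{prop:M0Sym^n} and Corollary~\ref{cor:GG}, $\Rep(\PiMcKay,n\delta)\git\GL(n\delta)\cong\Sym^n(\mathbb{A}^2/\Gamma)$ is a reduced, irreducible variety. Its coordinate ring is generated by the $\tr_{p}$ with $p=p_1^{r_1}p_2^{r_2}p_3^{r_3}$. These are the power-sum type functions
\[
\sum_{i=1}^n z_1^{r_1}z_2^{r_2}z_3^{r_3}(x_i),
\]
evaluated on the $n$ points $x_1,\dots,x_n$ of the cycle. Set $q=q_1^{r_1}q_2^{r_2}q_3^{r_3}$. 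On a semisimple module $\bigoplus_i M_{x_i}$ with $M_{x_i}$ of dimension $\delta$, $q$ acts on $e_j M_{x_i}$ by the scalar $z^{r}(x_i)$, because the $q_\ell$ commute and act centrally on each simple factor. Hence
\[
\tr_q=\delta_j\,\tr_p
\]
on the dense open set of points of $\Sym^n(\mathbb{A}^2/\Gamma)$ where all summands are simple of dimension $\delta$.

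\textbf{Step 3: extending to everywhere.} Both sides of $\tr_q=\delta_j\,\tr_p$ are regular functions on the reduced irreducible affine quotient, so the equality holds everywhere. Therefore each generator $\tr_p$ equals $\delta_j^{-1}\tr_q$, which is a linear combination of trace functions of cycles at $j$. This proves the proposition.

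\textbf{Main obstacle.} The delicate point is Step 1. One must produce $q_\ell$ in $e_j\PiMcKay e_j$ that are genuine linear combinations of cycles at $j$ and that act on every simple module of dimension vector $\delta$ by the same scalar as $p_\ell$, i.e.\ that lie in the image of the centre $Z(\PiMcKay)\cong R_0$ under multiplication by $e_j$.

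I would construct them via the element $\sum_i e_i p_\ell^{(i)}$ in the centre of $\PiMcKay\cong\End_{R_0}(\bigoplus R_i)$ given by multiplication by $z_\ell$. Then $q_\ell:=e_j z_\ell e_j$ is automatically spanned by paths at $j$ because $\PiMcKay$ is a path algebra quotient. In addition, $q_\ell$ acts by the scalar $z_\ell(x)$ on any module where $R_0$ acts through evaluation at $x$. This last property should follow from the fact that the centre acts by scalars on simple modules, and from Step 1's identification of $e_0 z_\ell e_0$ with $p_\ell$.
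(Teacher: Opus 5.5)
Your proof is correct, but it takes a different route from the paper's.

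\textbf{The paper's route.} The paper handles $n=1$ through Lemma~\ref{lem:TjDeltaj}. It realises the minimal resolution $S$ as a fine moduli space of $\theta$-stable $\PiMcKay$-modules and identifies $\End_{R_0}(R_j)$ with $\End_{\mathcal{O}_S}(\mathcal{R}_j)$. It then builds $\Delta_j$ (scalar matrices) and $T_j$ (fibrewise trace) with $T_j\circ\Delta_j=\delta_j$. Setting $q_\ell:=\Delta_j(p_\ell)$, which is exactly your $e_jc_\ell e_j$ with $c_\ell$ the central element given by multiplication by $z_\ell$, gives $\tr_{q_\ell}=\delta_j\tr_{p_\ell}$ at every point of $S$, with no density argument. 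For $n>1$ the paper passes through Crawley-Boevey's isomorphism $f^*$ onto $\mathfrak{S}_n$-invariants of the $n$-fold tensor power and uses the power sums $s'_{r_1,r_2,r_3}$ as generators. It then needs the multiplicativity $\tr_{q_k}\tr_{q_\ell}=\delta_j\tr_{q_kq_\ell}$ to recognise these as multiples of $f^*(\tr_q)$.

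\textbf{Your route.} You use that $R_0$ is central in $\PiMcKay\cong\End_{R_0}(\bigoplus_i R_i)$. By Schur's lemma, $c=c_1^{r_1}c_2^{r_2}c_3^{r_3}$ acts by a single scalar on each simple summand of dimension vector $\delta$. Reading that scalar off on the line $e_0M$ gives $\tr_{ce_j}=\delta_j\tr_{ce_0}$ on a dense open subset of $\Rep(\PiMcKay,n\delta)\git\GL(n\delta)$. Reducedness (Corollary~\ref{cor:GG}) and irreducibility (Proposition~\ref{prop:M0Sym^n}) upgrade this to an identity in the invariant ring, for all $n$ at once. The vertex-$0$ generators from Proposition~\ref{prop:M0Sym^n} then finish the proof.

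\textbf{What each buys.} Your argument is shorter and more elementary: it avoids the resolution, the Morita argument and the explicit form of $f^*$. Your Step~1 is in fact subsumed by Steps~2--3. The paper's argument instead yields the maps $\Delta_j,T_j$ as a result of independent interest and gets the $n=1$ identity pointwise. Note that the actual value $z_\ell(x)$ of the scalar plays no role in your argument; you only need the same scalar to appear at vertices $0$ and $j$.

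\textbf{One correction.} Delete the aside in Step~1 asserting $q_\ell=\sum_a(\text{path})\,p_\ell\,(\text{path})$ ``via the decomposition of $R_j$''. Apart from the garbled composition order, it cannot hold for $j\neq 0$. Since $p_\ell=c_\ell e_0$ with $c_\ell$ central, and $\PiMcKay$ is torsion-free over the domain $R_0$, such a formula would force $e_j\in\PiMcKay e_0\PiMcKay$. That fails because $\PiMcKay/\PiMcKay e_0\PiMcKay$ is the nonzero preprojective algebra of the finite Dynkin diagram. Your later definition $q_\ell:=e_jc_\ell e_j$ is the right one, and it is all the argument needs: it is a combination of cycles at $j$ simply because $\PiMcKay$ is a quotient of $\kk\McKay$.
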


 When the vertex of interest is $j=0$, we deduced this result in the proof of Proposition~\ref{prop:M0Sym^n} from the work of Crawley-Boevey~\cite[Lemma~3.3 and proof of Theorem~3.4]{CBdecomp}.
 
 Before providing a proof for any $0\leq j\leq r$, we present a well-known result for which we could not find a reference. Recall from \eqref{eqn:ADEhypersurface} that for $R_0=\kk[x,y]^{\Gamma}$, the preprojective algebra $\PiMcKay$ for the McKay quiver $\McKay$ is isomorphic to the endomorphism algebra of the $R_0$-module $\bigoplus_{0\leq i\leq r} R_i$.

\begin{lemma}
\label{lem:TjDeltaj}
 For each $0\leq j\leq r$, there exist $R_0$-module homomorphisms $\Delta_j\colon R_0 \to \End_{R_0}(R_j)$ and $T_j\colon \End_{R_0}(R_j)\to R_0$ whose composition $T_j\circ \Delta_j\colon R_0\to R_0$ is multiplication by $\delta_j:=\dim \rho_i$.  In particular, $\Delta_j$ is injective and $T_j$ is surjective.
\end{lemma}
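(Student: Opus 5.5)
The plan is to use the structure of $R_j$ as a module over $R_0=\kk[x,y]^\Gamma$ coming from the isotypic decomposition. Since $\Gamma$ is finite and $\kk$ has characteristic zero, $R_j$ is the $\rho_j$-isotypic component of $\kk[x,y]$. Hence there is a $\Gamma$-equivariant isomorphism
\[
R_j\cong M_j\otimes_\kk \rho_j, \qquad M_j:=\Hom_\Gamma(\rho_j,\kk[x,y]).
\]
Here $M_j$ is an $R_0$-module, and $R_0$ acts on $R_j$ through the first factor only.

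The map $\Delta_j$ will be the structure map $R_0\to \End_{R_0}(R_j)$, sending $f$ to multiplication by $f$. This is clearly $R_0$-linear.

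For $T_j$ I will not try to take a trace directly, since $R_j$ need not be free over $R_0$. Instead I will work over the field of fractions $L:=\operatorname{Frac}(R_0)$. Because $\kk(x,y)$ is Galois over $L$ with group $\Gamma$, the normal basis theorem gives $\kk(x,y)\cong L[\Gamma]\cong \bigoplus_i \rho_i^{\oplus \delta_i}\otimes_\kk L$ as $L[\Gamma]$-modules. Therefore $R_j\otimes_{R_0}L$ is free over $L$ of rank $\delta_j^2$. Using the decomposition above, $M_j\otimes_{R_0}L$ is an $L$-vector space of dimension $\delta_j$.

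Now define $T_j(\phi):=\tfrac{1}{\delta_j}\operatorname{tr}_L(\phi\otimes L)$, with trace taken on the $L$-vector space $R_j\otimes_{R_0} L$. For multiplication by $f\in R_0$ this gives $\tfrac{1}{\delta_j}\cdot\delta_j^2 f=\delta_j f$, so $T_j\circ\Delta_j$ is multiplication by $\delta_j$. To keep the statement clean one can drop the factor $\tfrac1{\delta_j}$ and instead use the trace on $M_j\otimes L$ of the induced endomorphism; this needs $\End_{R_0}(R_j)$ to act compatibly with the tensor decomposition, which I would rather avoid. So I take $\tfrac{1}{\delta_j}\operatorname{tr}$, which is legitimate because $\operatorname{char}\kk=0$.

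The main obstacle is showing that $T_j$ lands in $R_0$ rather than merely in $L$. I expect to handle it with integrality and normality. Every $\phi\in\End_{R_0}(R_j)$ satisfies a monic polynomial over $R_0$, by the Cayley–Hamilton/determinant trick, since $R_j$ is a finitely generated $R_0$-module. Its eigenvalues in an algebraic closure of $L$ are therefore integral over $R_0$. Hence $\operatorname{tr}_L(\phi)$ lies in $L$ and is integral over $R_0$. Since $R_0$ is normal, being an invariant ring of a normal domain, the trace lies in $R_0$. $R_0$-linearity of $T_j$ is immediate from linearity of the trace.

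Finally, injectivity of $\Delta_j$ and surjectivity of $T_j$ follow formally, because $\delta_j$ is a unit in $\kk\subset R_0$. Indeed, $T_j\circ\Delta_j$ is then an automorphism of $R_0$.
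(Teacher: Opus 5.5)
Your proof is correct, and it takes a genuinely different route from the paper.

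\textbf{The paper's route.} The paper works geometrically on the minimal resolution $S$. It identifies $\End_{R_0}(R_j)$ with $\End_{\mathcal{O}_S}(\mathcal{R}_j)$, where $\mathcal{R}_j$ is the tautological bundle of rank $\delta_j$; this uses the Morita equivalence and the description of $S$ as a moduli space of $\theta$-stable $\PiMcKay$-modules. On a trivialising affine cover it defines $\Delta_j$ as the diagonal embedding $g\mapsto g\otimes\id$ and $T_j$ as the matrix trace, and these glue. So $T_j\circ\Delta_j=\delta_j$ holds with no rescaling.

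\textbf{Your route.} You stay entirely within commutative algebra. $\Delta_j$ is the structure map, and $T_j$ is a rescaled generic trace over $L=\operatorname{Frac}(R_0)$. The determinant trick together with normality of $R_0$ then forces that trace back into $R_0$.

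\textbf{What your route buys.} It is more elementary and more general. It works for any finitely generated module of positive generic rank over a normal Noetherian domain containing $\QQ$; the normal basis theorem is used only to pin down the rank. It also does not depend on how $R_j$ is read. Taken literally as the isotypic component, $R_j$ has rank $\delta_j^2$, and your factor $\tfrac{1}{\delta_j}$ handles this. The paper's identification of $\End_{R_0}(R_j)$ with $\End_{\mathcal{O}_S}(\mathcal{R}_j)$ and with $e_j\PiMcKay e_j$ effectively requires the rank-$\delta_j$ module of covariants; in that reading you simply drop the factor $\tfrac{1}{\delta_j}$.

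\textbf{What the paper's route buys.} Its $T_j$ is visibly, fibre by fibre, the trace on the vertex-$j$ space of the corresponding $\theta$-stable $\PiMcKay$-module. This is exactly the identity $T_j(q)(B)=\tr_q(B)$ used in \eqref{eqn:qkpk} in the proof of Proposition~\ref{prop:CBresultsforvertexj}. With your $T_j$ that identity would need a separate argument, for instance checking agreement over the dense open locus of free $\Gamma$-orbits, together with the identification of $\End_{R_0}(R_j)$ with $e_j\PiMcKay e_j$.
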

\begin{proof}
 Note that $R_0\cong H^0(\mathcal{O}_{\mathbb{A}^2/\Gamma})\cong H^0(\mathcal{O}_S)$ where $\tau\colon S\to \mathbb{A}^2/\Gamma$ is the minimal resolution. In addition, regard $S$ as the fine moduli space $\Rep(\PiMcKay,\delta)\git_\theta \GL(\delta)$ of $\theta$-stable $\PiMcKay$-modules of dimension vector $\delta$ following Crawley--Boevey~\cite{CBMcKay}, where $\theta\in \GL(\delta)^\vee$ satisfies $\theta_k>0$ for $k\neq 0$. This endows $S$ with a tautological vector bundle $\bigoplus_{0\leq k\leq r} \mathcal{R}_k^{\oplus \delta_k}$ satisfying $\mathcal{R}_0\cong \mathcal{O}_S$ and $\rank \mathcal{R}_k = \dim\rho_k$ for $k\neq 0$, whose endomorphism algebra is isomorphic to the skew group algebra of $\Gamma$ (see \cite[Lemma~4.1]{CIK17}). Applying the Morita equivalence argument as in \cite[proof of Lemma~3.1]{Craw21} establishes an isomorphism 
  \[
  \PiMcKay\cong \End_{\mathcal{O}_S}\bigg( \bigoplus_{0\leq k\leq r} \mathcal{R}_k\bigg)
  \]
 of $\kk$-algebras.  Combining this with the isomorphism \eqref{eqn:PiEnd} gives $\End_{R_0}(R_j)\cong e_j\PiMcKay e_j\cong \End_{\mathcal{O}_S}(\mathcal{R}_j)$.
  
 We construct the maps $\Delta_j$ and $T_j$ locally on $S$. Choose an affine cover $\{U_k\}$ of $S$ that trivialises $\mathcal{R}_j$, so $\mathcal{R}_j\vert_{U_k}\cong \mathcal{O}_{U_k}^{\oplus \delta_j}$ for each index $k$. Over any such affine chart, we have
  \[ 
\End_{\mathcal{O}_{U_k}}\!\big(\mathcal{R}_j\vert_{U_k}\big)\cong H^0(\mathcal{O}_{U_k})\otimes_\kk \Mat(\delta_j\times \delta_j),
  \]
  so $\End_{\mathcal{O}_S}(\mathcal{R}_j)$ is locally an algebra of $H^0(\mathcal{O}_{U_k})$-valued square matrices. Now define
  \[
  \Delta_k\vert_{U_k}\colon H^0(\mathcal{O}_{U_k}) \longrightarrow \End_{\mathcal{O}_{U_k}}\!\big(\mathcal{R}_j\vert_{U_k}\big)
  \]
  by sending $g$ to the diagonal matrix $g\otimes \id_{\delta_j\times \delta_j}$. These maps all glue to give a well-defined map $\Delta_j\colon R_0\cong H^0(\mathcal{O}_S)\to \End_{\mathcal{O}_S}(\mathcal{R}_j)\cong \End_{R_0}(R_j)$. Similarly, define
  \[
  T_j\vert_{U_k}\colon \End_{\mathcal{O}_{U_k}}\!\big(\mathcal{R}_j\vert_{U_k}\big) \longrightarrow 
  H^0(\mathcal{O}_{U_k})
  \]
  by sending each $H^0(\mathcal{O}_{U_k})$-valued $\delta_j\times \delta_j$ matrix to its trace. These maps glue because trace is invariant under conjugation, giving a map $T_j\colon \End_{R_0}(R_j)\cong \End_{\mathcal{O}_{S}}(\mathcal{R}_j)\to H^0(\mathcal{O}_S)\cong R_0$.
  
  Finally, let $g\in R_0\in H^0(\mathcal{O}_S)$ and $s\in S$. Then $s\in U_k$ for some index $k$,  and we compute
  \begin{equation}
      \label{eqn:TjDeltaJ}
  T_j\big(\Delta_j(g)\big)(s) = \big(T_j\vert_{U_k}\circ \Delta_j\vert_{U_k}\big)(g\vert_{U_k})(s) = \tr\big(g\vert_{U_k}\otimes \id_{\delta_j\times \delta_j}\big)(s) = \delta_j g(s),
   \end{equation}
  so $(T_j\circ \Delta_j)(g)=\delta_jg$ as required.
\end{proof}

\begin{proof}[Proof of Proposition~\ref{prop:CBresultsforvertexj}]
 First, assume $n=1$. Identify the functions $p_1, p_2, p_3$ in $R_0$ corresponding to the chosen algebra generators from \eqref{eqn:ADEhypersurface} with the corresponding linear combinations of classes of paths in $e_0 \PiMcKay e_0$. For $1\leq \ell\leq 3$, we have $p_\ell=\tr_{p_\ell}$ because $\dim \rho_0=1$, and 
 \begin{equation}
     \label{eqn:qell}
  q_\ell:= \Delta_j(p_\ell)\in \End_{R_0}(R_j)\cong e_j \PiMcKay e_j
 \end{equation}
 is a linear combination of finitely many cycles in $\McKay$. Since trace is linear, it suffices to show that $\kk[\Rep(\PiMcKay,\delta)]^{\GL(\delta)}\cong R_0\cong H^0(\mathcal{O}_S)$ is generated by $\tr_{q_1}, \tr_{q_2}, \tr_{q_3}$. As in the proof of Lemma~\ref{lem:TjDeltaj}, any point $s\in S$ lies in some chart $U_k$, and moreover, may be regarded as the $\GL(\delta)$-orbit of a $\theta$-stable point $B\in \Rep(\PiMcKay,\delta)$. Now \eqref{eqn:TjDeltaJ} and the fact that $p_\ell=\tr_{p_\ell}$ from above give
 \begin{equation}
     \label{eqn:qkpk}
 \tr_{q_\ell}(B) = T_j(q_\ell)(B) = T_j\big(\Delta_j(p_\ell)\big)(B) = (\delta_j p_\ell)(B) = \delta_j\tr_{p_\ell}(B)
 \end{equation}
 for $1\leq \ell\leq 3$, so the scaled function $\frac{1}{\delta_j}\tr_{q_\ell}$ equals $\tr_{p_\ell}$. These latter three trace functions generate $\kk[x,y]^\Gamma\cong\kk[\Rep(\PiMcKay,\delta)]^{\GL(\delta)}$ by the isomorphism $e_0\PiMcKay e_0\cong R_0$ from \cite[Theorem~8.10]{CBHolland}, and hence so do the trace functions $\tr_{q_\ell}$ for $1\leq \ell\leq 3$ coming from cycles through vertex $j$ as required.
   
   Now consider $n> 1$. The isomorphism $\nu_2$ from \eqref{eqn:nu_2} is obtained from a $\kk$-algebra isomorphism 
 \begin{equation}
     \label{eqn:CBisomorphism}
 f^*\colon \kk[\Rep(\PiMcKay,n\delta)]^{\GL(n\delta)}\longrightarrow \Big(\big(\kk[\Rep(\PiMcKay,\delta)]^{\GL(\delta)}\big)^{\otimes n}\Big)^{\mathfrak{S}_n},
  \end{equation}
 where the symmetric group $\mathfrak{S}_n$ acts by permuting the $n$ factors of the tensor algebra. The previous paragraph establishes that $\kk[\Rep(\PiMcKay,\delta)]^{\GL(\delta)}$ is generated by the trace functions $\tr_{q_\ell}$ for $1\leq \ell\leq 3$, where each $q_\ell$ is a linear combination of cycles in the McKay quiver $\McKay$ passing through vertex $j$. For any $1\leq k\leq n$, let $\pi_k\colon (\mathbb{A}^2/\Gamma)^n\to \mathbb{A}^2/\Gamma$ denote the projection to the $k^{th}$ factor. The proof of \cite[Theorem~3.4]{CBdecomp} shows that the elements
\[
s'_{r_1, r_2, r_3}=\sum_{1\leq k\leq n}\big(\tr_{q_1}^{r_1}\tr_{q_2}^{r_2}\tr_{q_3}^{r_3}\big)\circ \pi_k
\]
 generate $\big(\big(\kk[\Rep(\PiMcKay,\delta)]^{\GL(\delta)}\big)^{\otimes n}\big)^{\mathfrak{S}_n}$, where $r_1, r_2, r_3\geq 0$. Since $\tr_{q_k}=\delta_j \tr_{p_k}$ by \eqref{eqn:qkpk}, we obtain
  \[
 \tr_{q_k}\tr_{q_\ell} = \delta_j^2\tr_{p_k}\tr_{p_\ell} = \delta_j^2\tr_{p_k p_\ell} = \delta_j\tr_{q_k q_\ell},
 \]
 where the second equality follows from $\dim \rho_0=1$, and the third equality follows as in \eqref{eqn:qkpk}. Then $\tr_{q_1}^{r_1}\tr_{q_2}^{r_2}\tr_{q_3}^{r_3} = \delta_j^{(r_1+r_2+r_3)-1}\tr_q$ for the class $q:=q_1^{r_1}q_2^{r_2}q_3^{r_3}\in e_j\PiMcKay e_j$ of cycles through $j$, so
 \[
 s'_{r_1, r_2, r_3}= \delta_j^{(r_1+r_2+r_3)-1}\sum_{1\leq k\leq n}\tr_{q}\circ \pi_k.
 \]
 As the proof of \cite[Theorem~3.4]{CBdecomp} shows, this is the image under the isomorphism $f^*$ from \eqref{eqn:CBisomorphism} of the scalar $\delta_j^{(r_1+r_2+r_3)-1}$ times the trace function $\tr_q$ in $\kk[\Rep(\PiMcKay,n\delta)]$. Therefore, the functions $\tr_q$ for classes of the form $q = q_1^{r_1}q_2^{r_2}q_3^{r_3}$ in $\McKay$ generate $\kk[\Rep(\PiMcKay,n\delta)]^{\GL(n\delta)}$. The result follows since each $q_1, q_2, q_3$, and hence every such class $q$, is  a linear combination of cycles through vertex $j$.
\end{proof}

 \small{
\bibliographystyle{plain}

}

   \end{document}